\theoremstyle{definition}
 \newtheorem{defi}{Definition}[section]
\theoremstyle{remark}
 \newtheorem{remark}[defi]{Remark}
 \newtheorem{exam}[defi]{Example}
\theoremstyle{plain}
\newtheorem{theo}[defi]{Theorem}
\newtheorem{prop}[defi]{Proposition}
\newtheorem{assumption}[defi]{Assumption}
\newtheorem*{acknow}{Acknowledgments}
\def\e#1\e{\begin{equation}#1\end{equation}}
\def\ea#1\ea{\begin{align}#1\end{align}}
\numberwithin{equation}{section}
\newcommand{\zz}{\mathbb{Z}}
\newcommand{\rr}{\mathbb{R}}
\newcommand{\cc}{\mathbb{C}}
\newcommand{\Z}[1]{\zz_{#1}}
\newcommand{\abs}[1]{\left\vert #1 \right\vert}
\newcommand{\Bcal}{\mathcal{B}}
\newcommand{\Ecal}{\mathcal{E}}
\newcommand{\Fcal}{\mathcal{F}}
\newcommand{\Hcal}{\mathcal{H}}
\newcommand{\Ical}{\mathcal{I}}
\newcommand{\Jcal}{\mathcal{J}}
\newcommand{\Mcal}{\mathcal{M}}
\newcommand{\Pcal}{\mathcal{P}}
\newcommand{\Scal}{\mathcal{S}}
\newcommand{\Tcal}{\mathcal{T}}
\newcommand{\Ucal}{\mathcal{U}}
\newcommand{\Ctil}{\widetilde{C}}
\newcommand{\acs}{almost complex structure}
\newcommand{\Ainf}{$A_\infty$}
\title[Equivariant Floer homology]{Equivariant Lagrangian Floer homology via multiplicative flow trees}
\author[Guillem Cazassus]{Guillem Cazassus}
\address{Centre for Quantum Mathematics, Department of Mathematics and Computer Science, University of Southern Denmark}
\email{g.cazassus@gmail.com}
\thanks{This work was funded by EPSRC grant reference EP/T012749/1, the Simons Collaboration grant no. 994320, and the ERC-SyG project ReNewQuantum}
\begin{document}

\begin{abstract}We provide constructions of equivariant Lagrangian Floer homology groups, by constructing and exploiting an \Ainf -module structure on the Floer complex.
\end{abstract}

\maketitle
\tableofcontents

\section{Introduction}
\label{sec:intro}

\subsection{Background}
\label{ssec:background}

Equivariant homology of a $G$-space $X$ is a rich invariant that is usually better behaved than the homology of the quotient, when the action is not free. 

Likewise, if a symplectic manifold $M$ admits a Lie group action, one can define equivariant versions of Floer homology \cite{Frauenfelder,Woodward_quasimap_FH,SeidelSmith,HLSflex,HLSflex_corr,HondaBao,ChoHong,HLSsimplicial,KimLauZheng,LauLeungLi}
 (see for example \cite{equiv} for a more detailed account of the existing approaches). 

In particular, for a pair of Lagrangians $L_0, L_1$ in a symplectic manifold with an action of a Lie group, the first construction of equivariant Lagrangian Floer homology appeared in \cite{HLSsimplicial}. Compared to other approaches, this one is of algebraic nature, and relies on advanced constructions from the theory of $(\infty,1)$-categories. 

In this paper, we provide another construction. It is similar in nature with \cite{HLSsimplicial}, in that our contruction relies on an algebraic action of $G$ on the (non-equivariant) Floer complex $CF(L_0, L_1)$. Though, the algebraic tools involved are comparatively simpler, and perhaps more standard to the symplectic topology community. While in \cite{HLSsimplicial} the algebraic object associated with $G$ is a simplicial nerve, we instead use the Morse complex of $G$, endowed with a certain \Ainf -algebra structure built from the group multiplication.

On the other side of the Atiyah-Floer conjecture, namely instanton gauge theory, Miller Eismeier \cite{mike_equiv} also constructed several versions of equivariant Floer homology groups. Exploiting a dga action on the Floer complex, he produced  Borel, co-Borel and Tate homology groups, by applying some Bar constructions he developped.

\subsection{Statement of results}
\label{ssec:stat_results}

\begin{theo}\label{th:Ainf_str_intro}Under certain assumptions (see Section~\ref{ssec:geom_setting}), if a compact Lie group $G$ acts by symplectomorphisms on a symplectic manifold $M$, and $L_0, L_1\subset M$ is a pair of $G$-invariant Lagrangians, then:
\begin{itemize}
\item the Morse complex $CM_*(G)$ can be endowed with an \Ainf -algebra structure,
\item the Floer complex $CF(L_0, L_1)$ is an \Ainf -module over the former \Ainf -algebra.
\end{itemize} 
\end{theo}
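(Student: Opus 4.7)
The plan is to build both structures from moduli spaces of \emph{multiplicative flow trees} on $G$: planar rooted trees whose edges carry negative gradient flow lines of a chosen Morse--Smale pair $(f,\rho)$ on $G$, and whose internal trivalent vertices enforce the group multiplication, in the sense that if $e_1,e_2$ are the two incoming edges at an internal vertex with outgoing edge $e_3$, the incident values satisfy $e_1(\text{end})\cdot e_2(\text{end}) = e_3(\text{start})$ in $G$. The $A_\infty$-operation $\mu_k$ on $CM_*(G)$ will count rigid such trees with $k$ input leaves labelled by critical points $x_1,\ldots,x_k$ and a root labelled by $x_0$, in a family parameterised by the Stasheff associahedron $K_k$, in the spirit of the Fukaya--Oh cluster construction. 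For the module structure on $CF(L_0,L_1)$, the operation $\mu^{\mathrm{mod}}_k$ will count finite-energy $J$-holomorphic strips $u:\rr\times[0,1]\to M$ from $y_-$ to $y_+$ with boundary on $L_0,L_1$, together with $k$ ordered marked points on (say) the $L_1$-boundary, each decorated with a half-infinite flow line out of a critical point $x_i$, whose endpoint $g_i\in G$ is used to twist $u$ locally by the symplectomorphism $g_i\cdot$ at that marked point.

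The first step is to set up the moduli spaces $\Mcal(x_1,\ldots,x_k;x_0)$ of multiplicative flow trees as manifolds with corners of the expected dimension, which by a standard count equals $\sum|x_i| - |x_0| + k - 2 - (k-2)\dim G$ shifted appropriately by the number of internal edges and vertex constraints. Compactness is immediate from compactness of $G$ together with Morse energy bounds. Transversality will be achieved by perturbing $f$ and $\rho$, possibly with domain-dependent deformations near the interior vertices, so that the evaluation maps at internal vertices become mutually transverse to the diagonal encoding the multiplication constraint. Once regularity is in place, the $A_\infty$-relations for the $\mu_k$ will follow from the standard codimension-one boundary analysis: the boundary of a $1$-dimensional moduli space decomposes into contributions of the form $\mu_a\circ_i\mu_b$, corresponding to an internal edge becoming infinitely long and breaking through an intermediate critical point, matched with the face stratification of $K_k$. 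Signs are dictated by canonical orientations on unstable manifolds of $f$ and a chosen orientation of $G$.

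The module relations then follow from the parallel analysis of the moduli spaces of strips with decorated marked points. Their codimension-one boundary consists of three geometric phenomena: (a) strip breaking at an intermediate generator of $CF(L_0,L_1)$, producing the usual $\mu_1^{\mathrm{mod}}$-compositions; (b) one of the attached flow lines breaking at an interior critical point of $f$, producing an algebra--module composition $\mu_{k-j+1}^{\mathrm{mod}}(\ldots,\mu_j^{\mathrm{alg}}(\ldots),\ldots)$; and (c) two consecutive boundary marked points colliding, at which moment the two group elements $g_i, g_{i+1}$ merge via the multiplication on $G$, and the collision locus corresponds exactly to grafting in a small multiplicative tree next to the strip, reproducing a term of the form $\mu_{k-1}^{\mathrm{mod}}(\ldots,\mu_2^{\mathrm{alg}}(x_i,x_{i+1}),\ldots)$. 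Matching all three boundary contributions gives the $A_\infty$-module relations, provided the same Morse data on $G$ are used consistently in the algebra and module operations.

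The main technical obstacle is achieving transversality simultaneously and coherently for all these moduli spaces, especially in the presence of a $G$-action on $M$ that need not be free: one cannot simply perturb the holomorphic curve equation $G$-equivariantly and hope to obtain regular moduli, but the construction should not destroy the $G$-coherence that makes the module relations close up. The cleanest way around this is to use domain-dependent almost complex structures, confining the use of the $G$-action to the (finite-dimensional) twist data $g_i$ at marked points, so that regularity in the strip and regularity in the Morse trees can be decoupled. A secondary delicate point is the compactness analysis of colliding marked points: one must verify that the limit of two colliding decorations is precisely a strip with an attached short multiplication tree, so that the collision boundary stratum is identified with the image of $\mu_2^{\mathrm{alg}}$ and higher Stasheff faces under the module operation. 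Independence of the whole construction, up to $A_\infty$-quasi-isomorphism, from the choices of $(f,\rho,J)$ and perturbations will then follow by a parametrised version of the same analysis and is left to a subsequent step.
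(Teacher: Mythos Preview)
Your construction of the \Ainf -algebra on $CM_*(G)$ via multiplicative flow trees is essentially the paper's (Section~\ref{ssec:mult_trees_alg_str_CM_G}), including the domain-dependent pseudo-gradient perturbations near vertices. One correction: the virtual dimension is $|\underline{x}|-|y|+k-2$ with no $-(k-2)\dim G$ term. The multiplicative vertex condition $\gamma_1\cdot\gamma_2=\gamma_3$ is a single equation in $G$, and the paper reads off the formula by identifying the fibre over the centre of $K_k$ with $p(\Ucal_{x_1}\times\cdots\times\Ucal_{x_k})\cap\Scal_{y}$, $p$ the $k$-fold product.

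On the module side there is a genuine gap. You couple $G$ to the strip through ``marked points on the $L_1$-boundary'' at which $g_i$ ``twists $u$ locally''. As written this imposes nothing: since $L_1$ is $G$-invariant, replacing the boundary condition by $g_i\cdot L_1=L_1$ near a boundary point is vacuous, and a single boundary point carries no other natural coupling to $g_i\in G$. The paper instead cuts the strip along \emph{full vertical seams} $\{s_0\}\times[0,1]$ and imposes
\[
\gamma(v)\cdot u_-(0,t)=u_+(0,t)\qquad\text{for all }t\in[0,1],
\]
with $u_\pm$ the adjacent strip pieces and $\gamma(v)$ the endpoint of the incoming $G$-flowline (Definition~\ref{def:hybrid_tree}). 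This is the ``hybrid tree'': the right-most branch of the metric tree carries strips in $M$ joined across such seams, all remaining edges carry flowlines in $G$ with multiplicative vertices, and the whole configuration is parametrised by all of $K_{k+1}$, not only the caterpillar chamber in which each $x_i$ is attached by its own half-line directly to the strip. With the full-seam condition the wall-crossing argument closes up: when an internal strip edge shrinks to length zero, associativity of the $G$-action on $M$ identifies the adjacent chambers, exactly as associativity of multiplication does on the algebra side (cf.\ Proposition~\ref{prop:mod_spaces_tau} and equation~(\ref{eq:boundary_chamber})). Your ``two marked points collide and $g_i,g_{i+1}$ multiply'' is the right intuition for one codimension-one face, but the object that actually degenerates is a short strip patch with seam conditions on both ends, not a pair of boundary points; and the interior of the module moduli space must already contain trees in which several $x_i$'s have been multiplied together in $G$ before reaching the strip, else the terms $\mu^{\mathrm{mod}}(\ldots,\mu^{\mathrm{alg}}_j(\ldots),\ldots)$ with $j\geq 3$ have nothing to glue to.
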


Transposing \cite[App.~A]{mike_equiv} to the \Ainf\ setting, we then obtain:

\begin{theo}\label{th:4groups_intro}
For a certain dga $A_G$ associated to $G$, we construct four dg modules respectively refered to as the Borel, co-Borel, twisted Borel, and Tate complexes:
\e
CF^{+}_G(L_0, L_1),\ CF^{-}_G(L_0, L_1),\ \widetilde{CF}^{+}_G(L_0, L_1),\ CF^{\infty}_G(L_0, L_1).
\e
Furthermore, the three later fit into a short exact sequence of dg modules, inducing a long exact sequence of $H_*(A_G)$-modules in homology.
\end{theo}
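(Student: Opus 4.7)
The plan is to carry out the constructions from Miller Eismeier's appendix \cite[App.~A]{mike_equiv}, which are performed there for a strict dga action on a dg module, in the $A_\infty$ setting provided by Theorem~\ref{th:Ainf_str_intro}. The first step is \emph{rectification}: one replaces the $A_\infty$-algebra $CM_*(G)$ of Theorem~\ref{th:Ainf_str_intro} by a quasi-isomorphic dga $A_G$, for instance by taking the cobar of the bar, $A_G := \Omega B(CM_*(G))$. Functoriality of this construction for $A_\infty$-modules promotes $CF(L_0, L_1)$ to an honest dg module $M$ over $A_G$. A suitable choice of Morse function makes $[\mathrm{id}] \in CM_0(G)$ a strict unit of the $A_\infty$-structure, providing an augmentation $\epsilon : A_G \to \rr$.

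With $A_G$, $M$, and $\epsilon$ in hand, the four complexes are obtained from the bar machines of \cite[App.~A]{mike_equiv}:
\begin{align*}
CF^{-}_G(L_0, L_1) &:= B(\rr, A_G, M),\\
CF^{+}_G(L_0, L_1) &:= \mathrm{Cobar}(\rr, A_G, M),
\end{align*}
the former computing $\rr \otimes^{L}_{A_G} M$ and the latter $\mathrm{RHom}_{A_G}(\rr, M)$. The twisted Borel $\widetilde{CF}^{+}_G$ and the Tate complex $CF^{\infty}_G$ are the completed or two-sided variants that Miller Eismeier builds precisely so that they fit with $CF^{-}_G$ into a short exact sequence.

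The short exact sequence
\begin{equation*}
0 \to CF^{-}_G(L_0, L_1) \to CF^{\infty}_G(L_0, L_1) \to \widetilde{CF}^{+}_G(L_0, L_1) \to 0
\end{equation*}
(up to a possible degree shift or reordering, matching \cite{mike_equiv}) is tautological by design: the Tate complex is constructed as a mapping-cone-type object for which $CF^{-}_G$ is a natural subcomplex and $\widetilde{CF}^{+}_G$ its quotient. Since all the maps are built from $A_G$-linear chain maps, the associated long exact sequence in homology is automatically one of $H_*(A_G)$-modules.

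The principal obstacle is the $A_\infty$-to-dga translation: \cite[App.~A]{mike_equiv} assumes a strict dga action, while ours is \emph{a priori} only $A_\infty$. Either one rectifies carefully, tracking signs and verifying that the quasi-isomorphism class of each of the four complexes is independent of the rectified model chosen, or, equivalently, one reformulates the Bar, Cobar and Tate constructions directly for $A_\infty$-modules. Both routes are by now standard, but some care is required to match Miller Eismeier's sign conventions and, crucially, to ensure that the completions underlying the twisted Borel and Tate complexes remain well-defined and functorial on the rectified model $A_G$. Once this bookkeeping is done, the four dg modules, the short exact sequence, and its induced long exact sequence in $H_*(A_G)$-module homology are all formal consequences of the definitions.
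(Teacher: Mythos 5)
Your proposal takes a genuinely different route from the paper, and one step in it would fail. The paper does \emph{not} rectify $CM_*(G)$ to a dga: it stays entirely in the $A_\infty$ world, defining \emph{unreduced} Bar and cobar constructions directly for $A_\infty$-algebras and $A_\infty$-coalgebras (Section~\ref{sec:alg_constr}), and taking $A_G = \Omega(A^*)$, the cobar of the linear dual $A^* = CM_*(G)^*$ viewed as an $A_\infty$-coalgebra; all four complexes are then dg modules over this $\Omega(A^*)$. Your rectification $A_G := \Omega B(CM_*(G))$ is a different (though weakly equivalent) dga, and the approach of transferring the module and then applying Miller Eismeier's strict constructions would in principle work, so the comparison is: you pay for rectification and the attendant sign/functoriality bookkeeping you mention, while the paper pays by having to re-prove the Bar, cobar, dualizing bimodule, and norm-map statements at the $A_\infty$ level (Definitions~\ref{def:Bar_Ainf_mod}, \ref{def:cobar_Ainf_mod}, \ref{def:dualizing_bimod}, \ref{def:Norm_map}, Proposition~\ref{prop:Norm_commutes_mod_str_and_diff}), which it does once and for all.

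The genuine gap is the strict unit. You propose to arrange, via ``a suitable choice of Morse function,'' that $[\mathrm{id}]\in CM_0(G)$ be a strict unit, so that Miller Eismeier's augmented/reduced constructions apply verbatim. This is precisely the point the paper flags as problematic: $CM_*(G)$ with the multiplicative-tree $A_\infty$-structure is only \emph{homotopy} unital, not strictly unital (see the bullet point just before Section~\ref{ssec:Bar_constr}), and no choice of Morse function is claimed or expected to fix this. The paper circumvents the issue by using unreduced Bar and cobar constructions (and a dualizing $(A^*,A)$-bimodule $D(A)$ in place of the strict-unit augmentation) rather than by manufacturing a strict unit; your proof would need to either do the same, or supply an argument for why a strict unit can be obtained, which is not given.

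Two further points. First, your labeling is reversed relative to the paper's: the paper sets $C_A^+(M) = B(k,A,M)$ (Borel, homotopy quotient, $k\otimes^{L}_{A}M$) and $C_A^-(M) = \Omega(k,A^*,M)$ (co-Borel, homotopy fixed points, $\mathrm{RHom}$), whereas you assign $+$ to the $\mathrm{RHom}$ side. Second, the twisted Borel complex $\widetilde{C}^+_A(M) = B(D(A),A,M)$ and the norm map $N_M\colon \widetilde{C}^+_A(M)\to C^-_A(M)$ are not mere ``completed variants''---they require the dualizing bimodule and the explicit verification that $N_M$ is $\Omega C$-equivariant and a chain map; gesturing at ``the two-sided variants Miller Eismeier builds'' leaves the actual content of the short exact sequence unconstructed.
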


\begin{remark}Following \cite{mike_equiv}, the dga $A_G$ should be equivalent to $C^*(BG)$, and the twisted version $\widetilde{CF}^{+}_G(L_0, L_1)$ to $CF^{+}_G(L_0, L_1)$, we will address these two points later.
\end{remark}

\subsection{Informal outline}
\label{ssec:outline}

The starting point of our construction is the following observation. Let $X$ be a smooth compact manifold, acted on by a Lie group $G$. By definition, its equivariant homology is given by the homology of its homotopy quotient:
\[
H_*^G(X) = H_*(X\times_{G} EG).
\]

It follows from Gugenheim and May's work \cite{gugenheim1974theory} that this  can be rewritten as the homology of a (derived) tensor product of $C_*(G)$-modules:
\[
H_*^G(X) =  H_* ( C_*(X) \otimes_{C_*(G)} C_*(EG) ).
\]

It is tempting to define equivariant Lagrangian Floer homology by a similar formula. Namely, for a pair of Lagrangians $L_0, L_1$ in a symplectic $G$-manifold $M$, one would like to define
\e\label{eq:naive_def_HF_G}
HF_G(L_0, L_1) =  ( CF(L_0, L_1) \otimes_{C_*(G)} C_*(EG) ).
\e
In order to do so, one needs an action of $C_*(G)$ on $CF(L_0, L_1)$: this is what we aim to construct.

Consider Morse homology first: pick Morse functions $f\colon G\to \rr$ and  $h\colon X\to \rr$. From the homotopy transfer theorem we know that the dg algebra and module structures of $C_*(G)$ and $C_*(X)$ induce respectively \Ainf -algebra and \Ainf -module structures on the Morse complexes $CM(G,f)$ and $CM(X,h)$. Though, it is instructive to construct such structures explicitly, in order to transpose them to the Floer setting. 

At the homology level, these become the algebra and module structures induces by the multiplications and action maps
\ea
m_G &\colon G\times G \to G , \\
m_X &\colon G\times X \to X .
\ea
Therefore, to define the \Ainf -operations
\ea
\mu^2_G &\colon CM(G,f)\otimes CM(G,f) \to CM(G,f) , \\
\mu^{1|1}_X &\colon CM(G,f)\otimes CM(X,h) \to CM(X,h),
\ea
a natural choice to consider are the Morse chain pushforwards of $m_G$ and $m_X$ \cite[Sec.~2.8]{KMbook}, i.e. by counting grafted lines as in Figure~\ref{fig:grafted_lines_mult_Y} (see also \cite{audin2014morse}, and \cite[Sec.~3.2]{equiv} for the grafted line point of view). 
\begin{figure}[!h]
    \centering
    \def\svgwidth{\textwidth}
   \includegraphics[scale=.15]{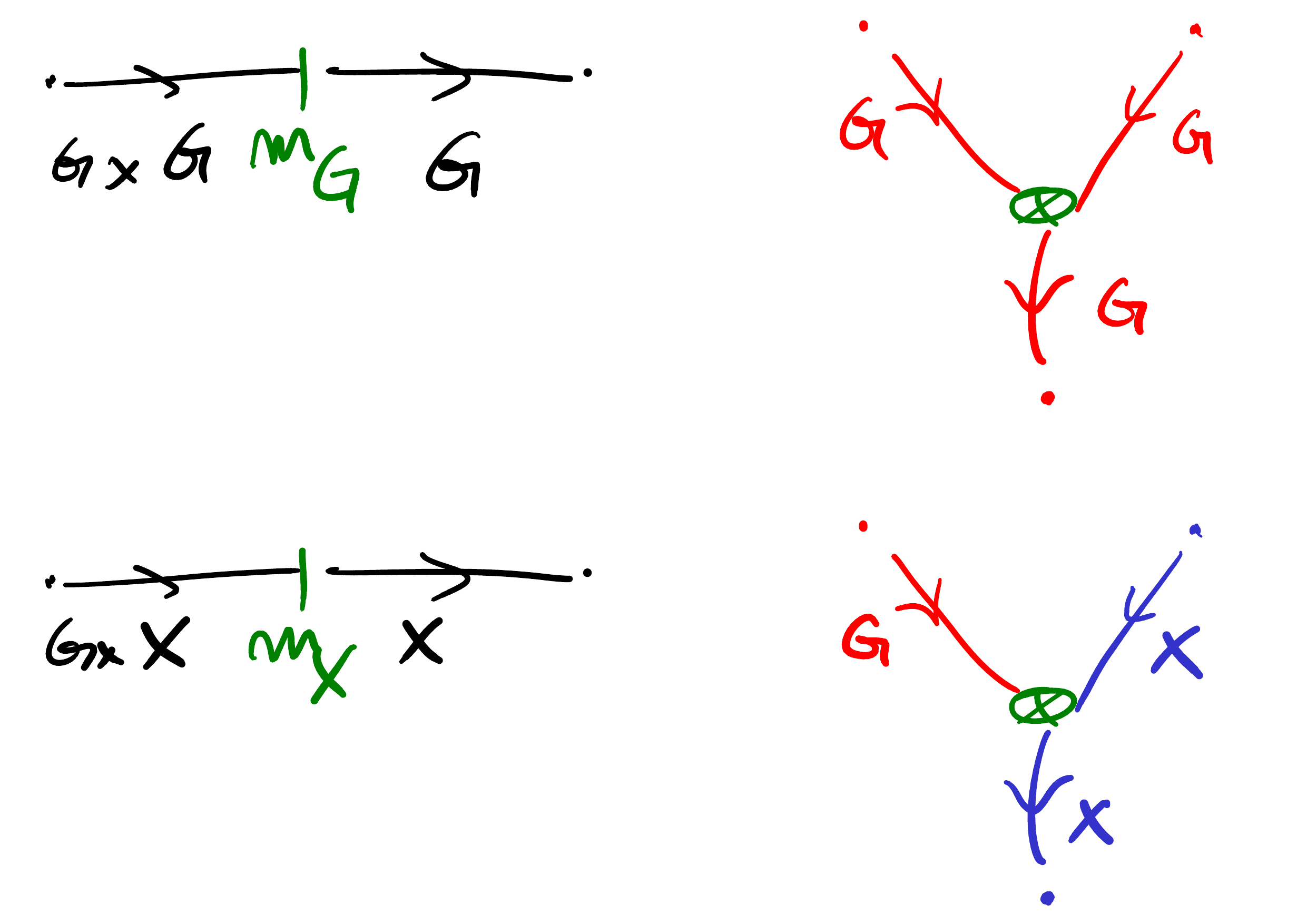}
      \caption{On the left, ``grafted Morse flow lines'' inducing the pushforwards of $m_G$ and $m_X$. On the right, their corresponding ``multiplicative Y's''.}
      \label{fig:grafted_lines_mult_Y}
\end{figure}
By unfolding the components in the product parts, these can alternatively be viewed as ``multiplicative Y's'', as on the right of Figure~\ref{fig:grafted_lines_mult_Y}. For example, the bottom right picture represents a triple of flowlines
\ea
\gamma_1 &\colon (-\infty, 0] \to G , \\
\gamma_2 &\colon (-\infty, 0]  \to X , \\
\gamma_3 &\colon [0, +\infty) \to X ,
\ea
limiting to given input and output critical points at the ends, and satisfying the condition
\e\label{eq:mult_cond}
 \gamma_1(0) \cdot \gamma_2(0) = \gamma_3(0) .
\e
These operations are not strictly associative in general, but they are up to homotopy. And the homotopies are given by counting flow trees with 3 inputs, as in Fukaya's construction \cite{fukaya1993morse}, except that vertices involve multiplicatve conditions (\ref{eq:mult_cond}) as opposed to $\gamma_1(0) = \gamma_2(0) = \gamma_3(0)$, see Figure~\ref{fig:mult_trees_mu_2_1} for $\mu^{2|1}_X \colon CM(G,f)^{\otimes 2}\otimes CM(X,h) \to CM(X,h)$, which measures the defect of associativity of $\mu^{1|1}_X$.
\begin{figure}[!h]
    \centering
    \def\svgwidth{\textwidth}
   \includegraphics[scale=.15]{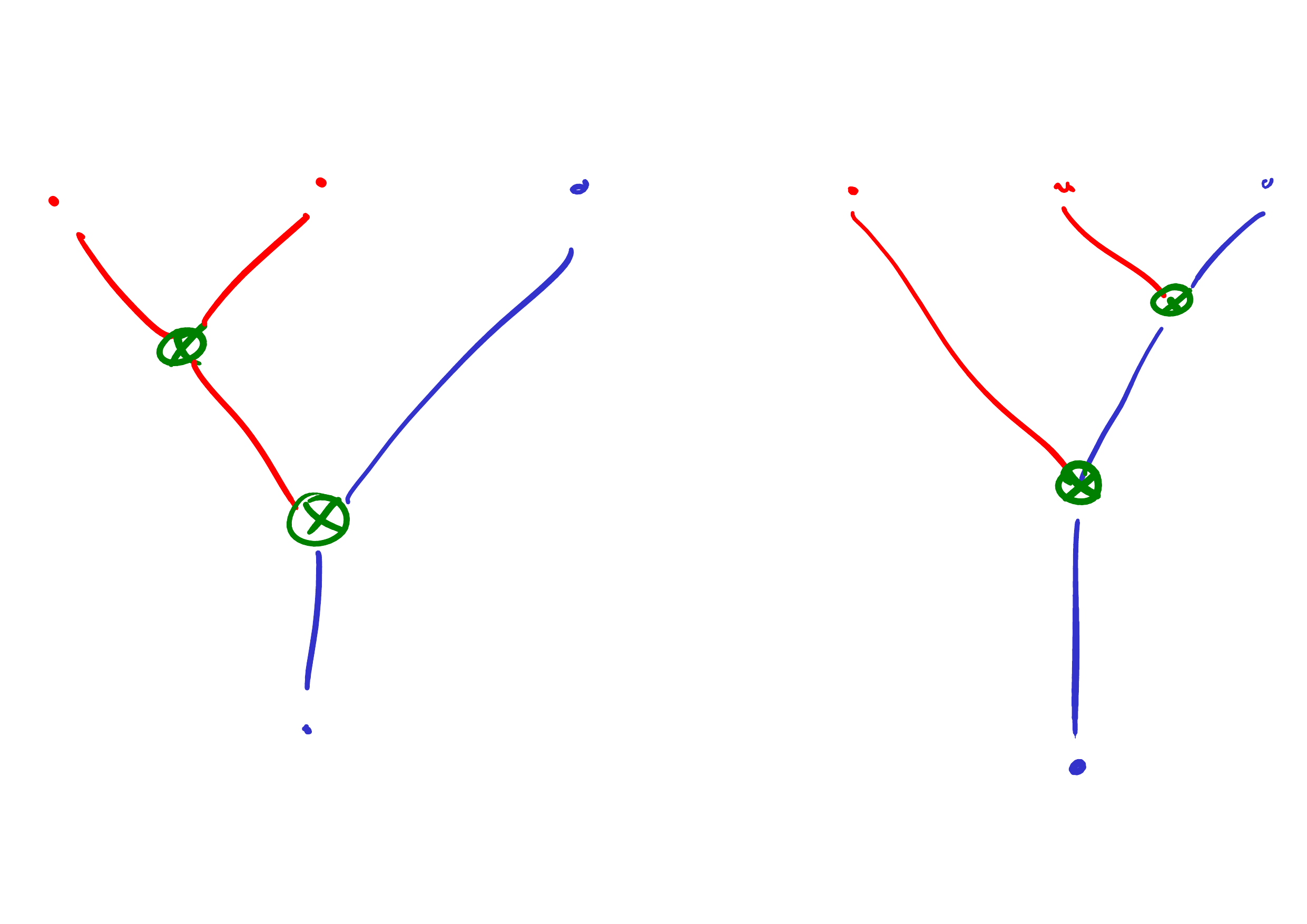}
      \caption{Trees involved in $\mu^{2|1}_X$.}
      \label{fig:mult_trees_mu_2_1}
\end{figure}

More generally, the operations $\mu^{k}_G$ and $\mu^{k|1}_X$ can be defined  analogously, by counting similar trees with respectively $k$ and $k+1$ inputs.

We will construct these operations in Section~\ref{sec:Ain_alg_mod_Morse}, and prove that they satisfy the \Ainf -relations.

Going from Morse to Floer theory, flowlines in $G$ and $X$ respectively become holomorphic strips in $T^*G$ and $T^*X$, grafted lines become quilted strips, and multiplicative trees become ``pseudo-holomorphic foams'' as in Figure~\ref{fig:mult_foam}.
\begin{figure}[!h]
    \centering
    \def\svgwidth{\textwidth}
   \includegraphics[scale=.15]{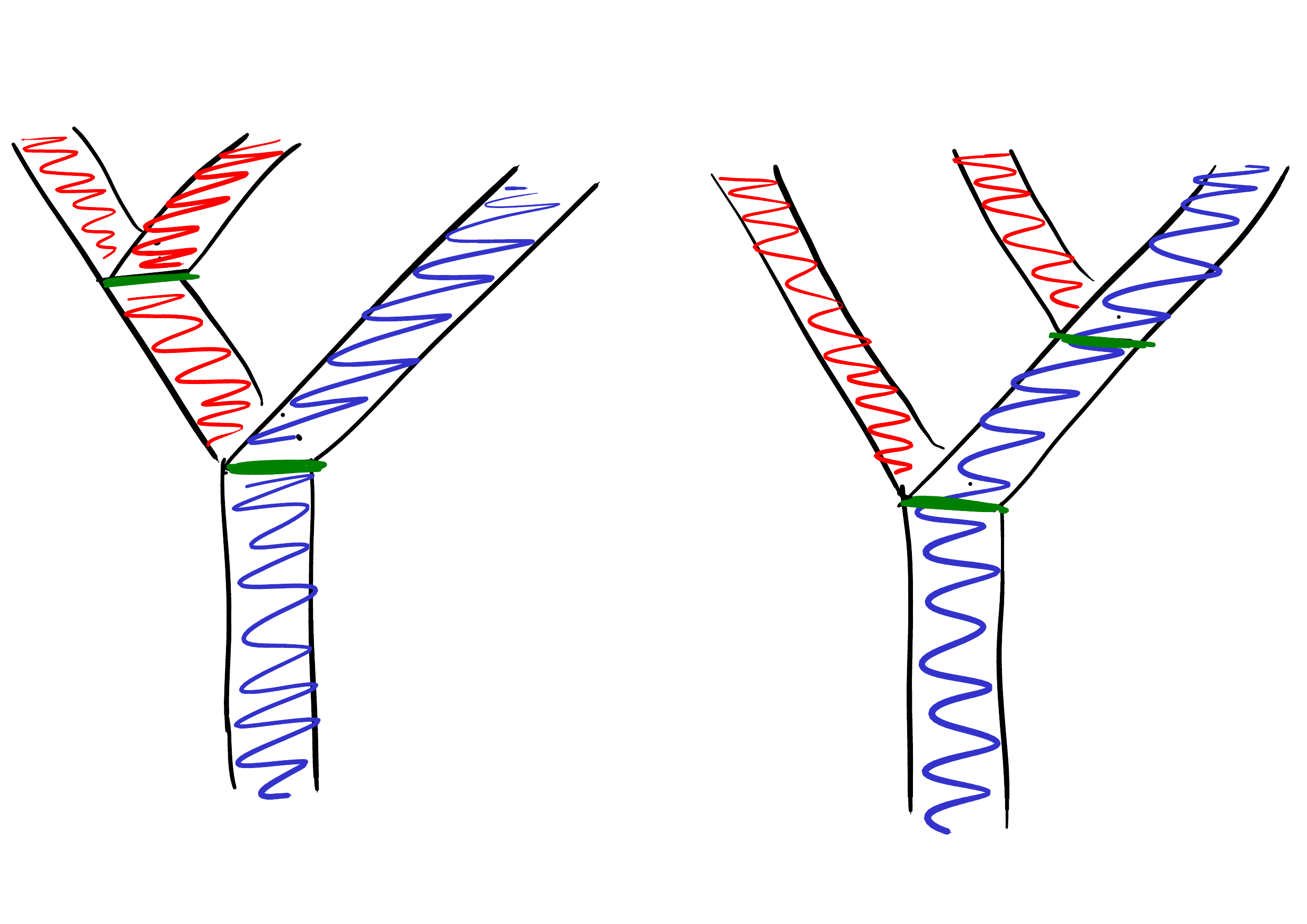}
      \caption{Pseudo-holomorphic foams, Floer-theoretic counterparts of multiplicative trees.}
      \label{fig:mult_foam}
\end{figure}

\begin{remark}\label{rem:foams} Foams are generalizations of quilts \cite{WWquilts}, in quilt language these consist in several ``patches'' (pseudo-holomorphic curves) ``seamed'' together along Lagrangian multi-correspondences (i.e. Lagrangian submanifolds of products of several symplectic manifolds). These can be represented as singular surfaces, and we call them foams in analogy of the ones appearing in \cite{kuperberg1996spiders,khovanov2004sl,kronheimer2019deformation}, though they are very different mathematical objects (in particular they do not correspond to singular surface in a given ambient manifold. Likewise, our multiplicative trees don't look like actual trees in an ambient manifold. 
\end{remark}

Observe here that $T^* X$ can be replaced by mostly any Hamiltonian $G$-manifold $M$ (subject to standard Floer-theoretic assumptions). Indeed, recall that the action and moment maps can be encoded in Weinstein's Lagrangian correspondence
\ea
\Lambda_G(M)   &\subset  T^*G \times M^- \times M \\
&= \left\lbrace ((q,p), m, m') : m' = q m, \ R^*_{q^{-1}} p = \mu(m)  \right\rbrace , \nonumber
\ea
which is the relevant seam condition for the foams we would consider.

Observe finally that if one shrinks the strips in $T^* G$ back to flowlines in $G$ while keeping strips in $M$ (i.e. by taking the Morse function on $G$ smaller and smaller), one ends up with a ``hybrid tree'' as in Figure~\ref{fig:hybrid_tree}. These are collections of flowlines in $G$ and strips in $M$, for which the seam conditions between $G$ and $M$ become 
\e
\gamma(0)\cdot  u_-(0,t) = u_-(0,t)\ ; \ 0\leq t\leq 1.
\e 
\begin{figure}[!h]
    \centering
    \def\svgwidth{\textwidth}
   \includegraphics[scale=.15]{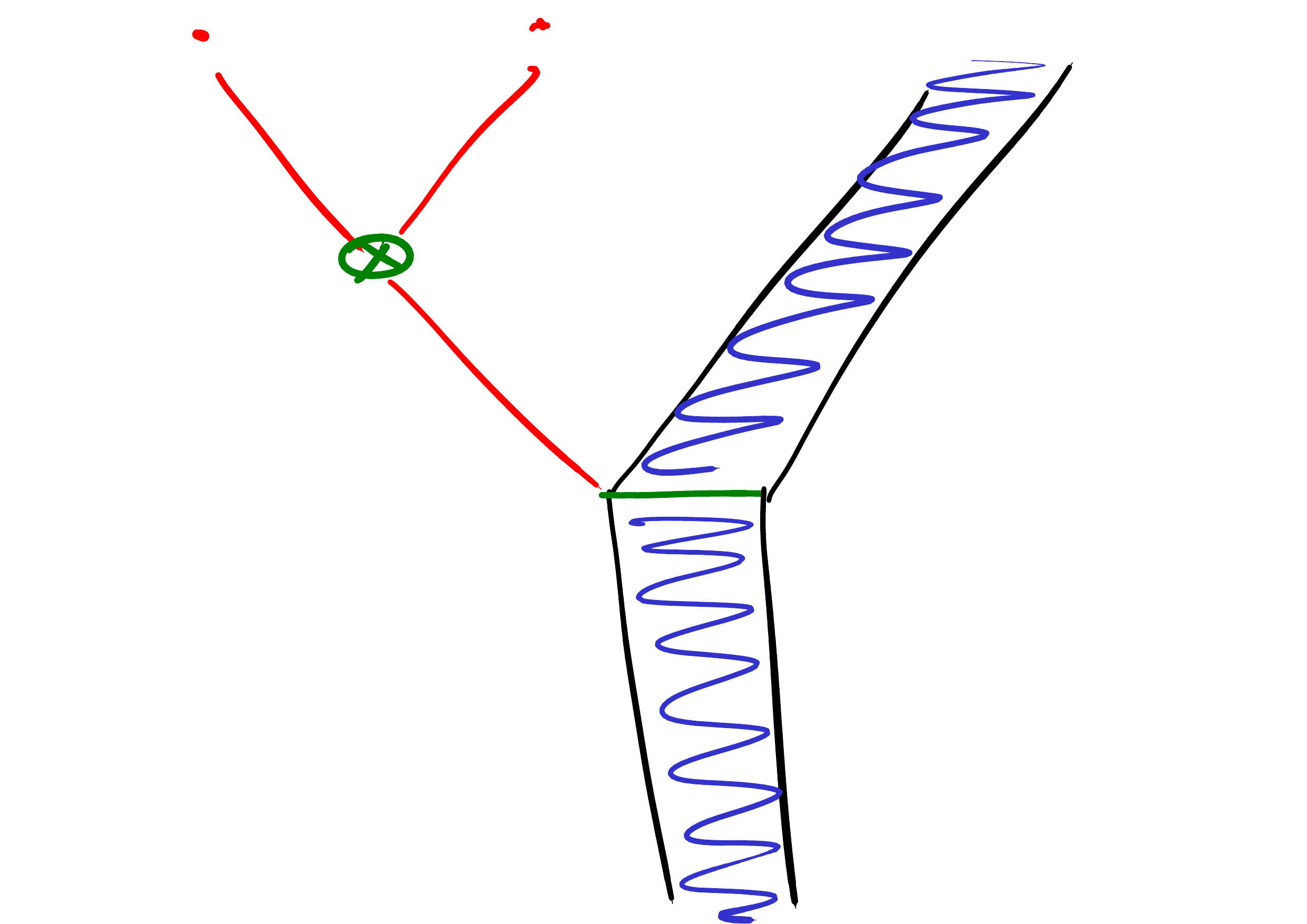}
      \caption{A hybrid tree. The red flowline should touch every point of the green seam, but this is difficult to draw.}
      \label{fig:hybrid_tree}
\end{figure}
The advantage of hybrid trees is that the action need not be Hamiltonian anymore, therefore we will use them instead of foams.

We therefore obtain an \Ainf -module structure on $CF(L_0, L_1)$ over the \Ainf -algebra $CM(G,f)$, and can define versions of equivariant Floer Homology, by replacing formula~(\ref{eq:naive_def_HF_G}) by the appropriate Bar constructions.

\begin{acknow}This project greatly benefitted from conversations with Artem Kotelskiy, Paul Kirk, Mike Miller Eismeier and Wai-Kit Yeung, who declined authorship but deserve some credit. Special thanks to Mike Miller Eismeier for patiently explaining his constructions.

We would also like to thank Paolo Ghiggini for pointing out a subtlety in an earlier attempt to transversality, and Fabian Haiden, Kristen Hendrick,  Dominic Joyce,  Ciprian Manolescu, Thibaut Mazuir,  Alex Ritter,  Sucharit Sarkar and Chris Woodward for helpful conversations.
\end{acknow}

\section{Algebraic constructions}
\label{sec:alg_constr}

Throughout this paper we work over the field $k=\mathbb{F}_2$ for simplicity, and with ungraded complexes, but most of it should hold over more general rings, such as $\zz$, or Novikov rings. Our goal is to construct, starting with an \Ainf -algebra $A$ and a left \Ainf -module $M$, the four complexes 
\e
C^+_A(M),\ C^-_A(M),\ \Ctil^+_A(M),\ C^\infty_A(M)
\e
called respectively the Borel, co-Borel, twisted Borel and Tate complexes.

Applying these constructions to $A=CM(G)$ and $M=CF(L_0, L_1)$ will give four versions of equivariant Lagrangian Floer complexes
\e
CF^{+}_G(L_0, L_1),\ CF^{-}_G(L_0, L_1),\ \widetilde{CF}^{+}_G(L_0, L_1),\ CF^{\infty}_G(L_0, L_1).
\e
We essentially follow Miller Eismeier \cite[Appendix~A]{mike_equiv} with minor adjustments, including:
\begin{itemize}
\item We work in the \Ainf -setting, as opposed to the differential graded one,
\item In \cite{mike_equiv}, Miller Eismeier uses  Bar constructions \emph{reduced} with respect to an augmentation. In our setting, $A$ is not strictly unital (though it should be homotopy unital), therefore we work with \emph{unreduced} constructions, which should be equivalent.
%\item 
%\item 
\end{itemize}

\subsection{The Bar construction}
\label{ssec:Bar_constr}

We recall some basic definitions about \Ainf algebras. For readability, we write $A^k$ for $A^{\otimes k}$, and $(a_1, \ldots , a_k)$ for $a_1 \otimes  \cdots \otimes a_k$.

\begin{defi}\label{def:Ainf_alg} An \Ainf -algebra $A$ is a vector space over $k$ with a collection of operations
\e
\mu_A = (\mu_A^1, \mu_A^2, \ldots); \ \ \mu_A^k\colon A^k \to A
\e
satisfying the \Ainf -relations:
\ea\label{eq:Ainf_alg_rel}
&\forall k\geq 1, \ \ \forall a_1, \ldots , a_k \in A, \\
0 =& \sum_{k+1 = k_1+ k_2; 1\leq i\leq k_1} \mu_A^{k_1}(a_1, \ldots , a_{i-1},  \mu_A^{k_2} (a_i, \ldots , a_{i+k_2}), a_{i+k_2+1}, \ldots , a_{k}). \nonumber
\ea
This can be understood graphically as in Figure~\ref{fig:Ainf_alg_rel}.
\end{defi}

\begin{figure}[!h]
    \centering
    \def\svgwidth{\textwidth}
   \includegraphics[scale=.15]{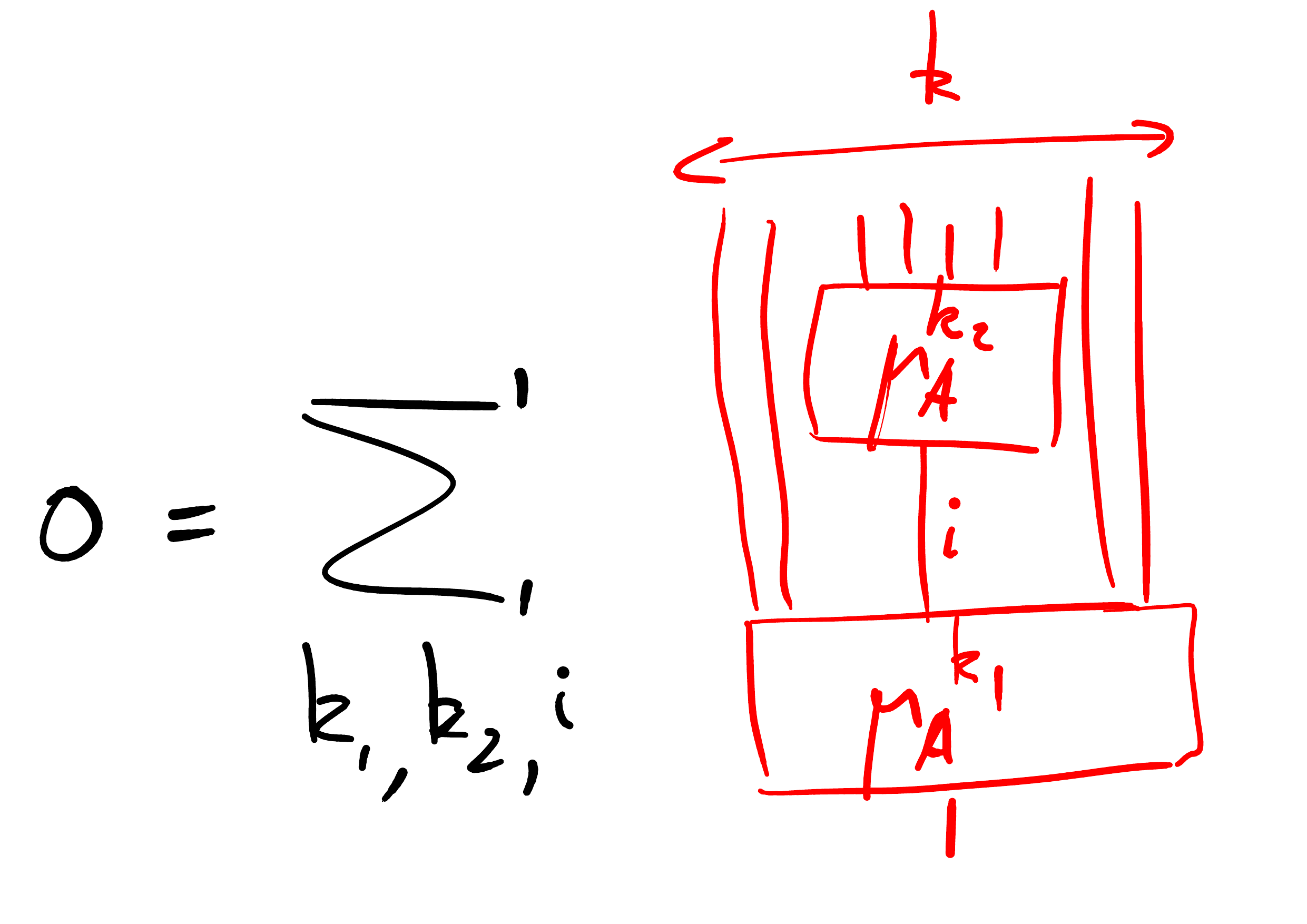}
      \caption{The \Ainf-relations for \Ainf-algebras.}
      \label{fig:Ainf_alg_rel}
\end{figure}

Such a structure can be encoded in a chain complex
\begin{defi}\label{def:Bar_Ainf_alg} Let $A$ be a vector space over $k$. Let $BA$ stand for the tensor coalgebra 
\e
BA = \bigoplus_{k\geq 0}A^k,
\e
with coproduct 
\ea
\Delta\colon BA &\to BA \otimes BA,\\
(a_1, \ldots, a_k) &\mapsto \sum_{i=0}^k (a_1, \ldots, a_i) \otimes (a_{i+1}, \ldots, a_k) .\nonumber
\ea
Any family of maps $(\mu^1, \mu^2, \ldots)$ of the form $\mu^k\colon A^k \to A$, seen as a single map $\mu\colon BA \to A$, uniquely extends as a coalgebra morphism 
\e
\overrightarrow{\mu}\colon BA \to BA.
\e
Furthermore, $\mu$ satisfies the \Ainf -relations if and only if $\overrightarrow{\mu}^2 =0$. In this case, $BA$ is a dg coalgebra.
\end{defi}

\begin{defi}\label{def:Ainf_mod} Let $(A, \mu_A)$ be an \Ainf -algebra. A left \Ainf -module $N$ is a $k$-vector space equipped with a collection of maps
\e
\mu_N = ( \mu_N^{0|1},  \mu_N^{1|1}, \ldots );\ \ \mu_N^{k|1}\colon A^k \otimes N \to N
\e
satisfying  \Ainf -relations similar to those of $A$ (see Figure~\ref{fig:Ainf_mod_rel}):
\ea\label{eq:Ainf_mod_rel}
&\forall k\geq 0, \ \ \forall a_1, \ldots , a_k \in A, n\in N \\
0 =& \sum_{k = k_1+ k_2; 1\leq i\leq k_1 +1} \mu_N^{k_1|1}(a_1, \ldots , a_{i-1},  \mu (a_i, \ldots ),  \ldots , a_k, n), \nonumber
\ea
where the inner $\mu$ either stands for $\mu_N^{k_2|1}$ or $\mu_A^{k_2+1}$, depending on whether $i=k_1+1$ or not.

Likewise, a right \Ainf -module $M$ is a $k$-vector space with a collection of maps $\mu_M^{1|k} \colon M\otimes A^k \to M$ satisfying similar relations.
\end{defi}

\begin{figure}[!h]
    \centering
    \def\svgwidth{\textwidth}
   \includegraphics[scale=.15]{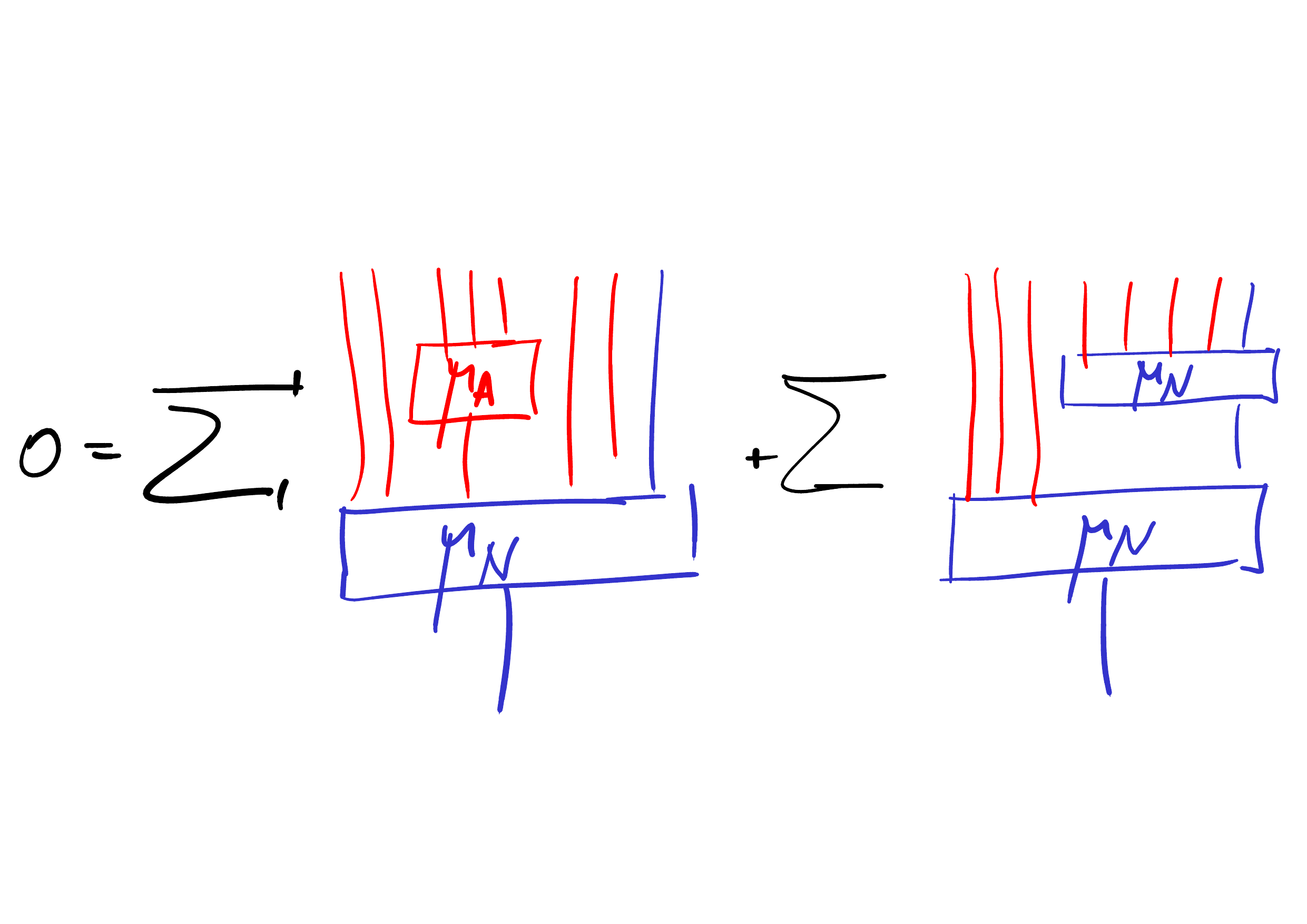}
      \caption{The \Ainf-relations for \Ainf-modules.}
      \label{fig:Ainf_mod_rel}
\end{figure}

These structures can also be encoded in a chain complex.

\begin{defi}\label{def:Bar_Ainf_mod} Let $(C, \mu_A)$ be an \Ainf -algebra, and let $(M, \mu_M)$, $(N, \mu_N)$ be respectively a right and a left \Ainf -module. Let their Bar complex be the chain complex
\e
B(M,A,N) = M\otimes BA \otimes N,
\e
equipped with the differential defined by (see Figure~\ref{fig:bar_diff})
\e
\partial (m, a_1, \ldots , a_k,n) = \sum_{k_1, k_2, k_3} (m, a_1, \ldots, a_{k_1}, \mu( \ldots ), a_{k_1+ k_2+1}, \ldots , a_k,n)
\e
where $k= k_1 + k_2+ k_3$, with
\begin{itemize}
\item $k_1, k_3 \geq -1$, but not both equal to $-1$,
\item $k_2\geq 1$ is the number of inputs of the inner $\mu$, which stands  either for $\mu_M^{1|k_2-1}$, $\mu_A^{k_2}$ or $\mu_N^{k_2-1|1}$.
%\item 
%\item 
\end{itemize}
It follows from the \Ainf -relations that $\partial ^2 = 0$.

The analogous operation of the coproduct of $BA$ is 
\e
\Delta_{B(M,A,N)} \colon B(M,A,N) \to B(M,A,k)\otimes B(k,A,N),
\e
with $k$ standing respectively for the trivial left and right \Ainf\ $A$-module. It follows that $B(M,A,k)$ (resp. $B(k,A,N)$) is  a right (resp. left) dg-comodule over $BA$.
\end{defi}

\begin{figure}[!h]
    \centering
    \def\svgwidth{\textwidth}
   \includegraphics[scale=.15]{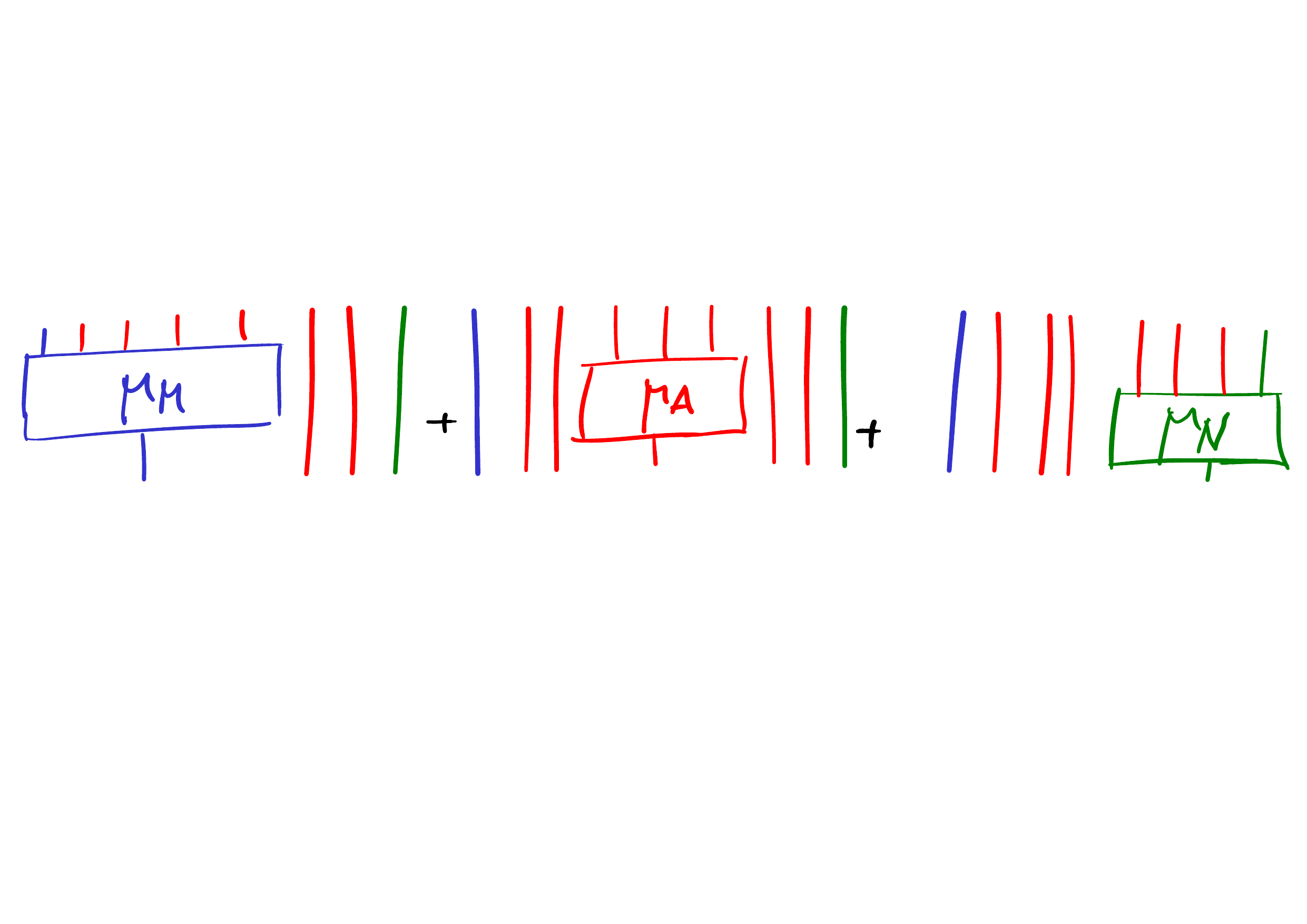}
      \caption{The differential of $B(M,A,N)$.}
      \label{fig:bar_diff}
\end{figure}

We can now define the Borel complex of a left $A$-module $M$ by
\e
C_A^+(M) = B(k,A,M),
\e
which is a left dg-comodule over $BA$.

\subsection{The cobar construction}
\label{ssec:cobar_constr}
The following is the dual notion of an \Ainf -algebra
\begin{defi}\label{def:Ainf_coalg} An \Ainf -coalgebra $A$ is a vector space over $k$ with a collection of operations
\e
\delta_C = (\delta_C^1, \delta_C^2, \ldots); \ \ \delta_C^k\colon C \to C^k
\e
satisfying the \Ainf -relation dual to (\ref{eq:Ainf_alg_rel}):
\e\label{eq:Ainf_coalg_rel}
\forall k\geq 1, \ \ %\forall a_1, \ldots , a_k \in A, \\
0 = \sum_{k+1 = k_1+ k_2; 1\leq i\leq k_1} (id_C^{\otimes i-1} \otimes \delta_C^{k_2}  \otimes id_C^{\otimes k_1 - i} )\circ \delta_C^{k_1}. \nonumber
\e
\end{defi}

\begin{exam} If $A$ is an \Ainf -algebra, let $C= A^*= \mathrm{Hom}_k(A,k)$. It inherits an \Ainf -coalgebra structure $\delta_C$ dual to $\mu_A$ defined by
\e
\left\langle \delta_C^k(c), (a_1, \ldots, a_k) \right\rangle = \left\langle c, \mu_A^k(a_k, \ldots, a_1) \right\rangle .
\e
Notice that we identify duals of tensor products following the rule \e
(A \otimes B)^* \simeq B^* \otimes A^*.
\e
\end{exam}

\begin{figure}[!h]
    \centering
    \def\svgwidth{\textwidth}
   \includegraphics[scale=.15]{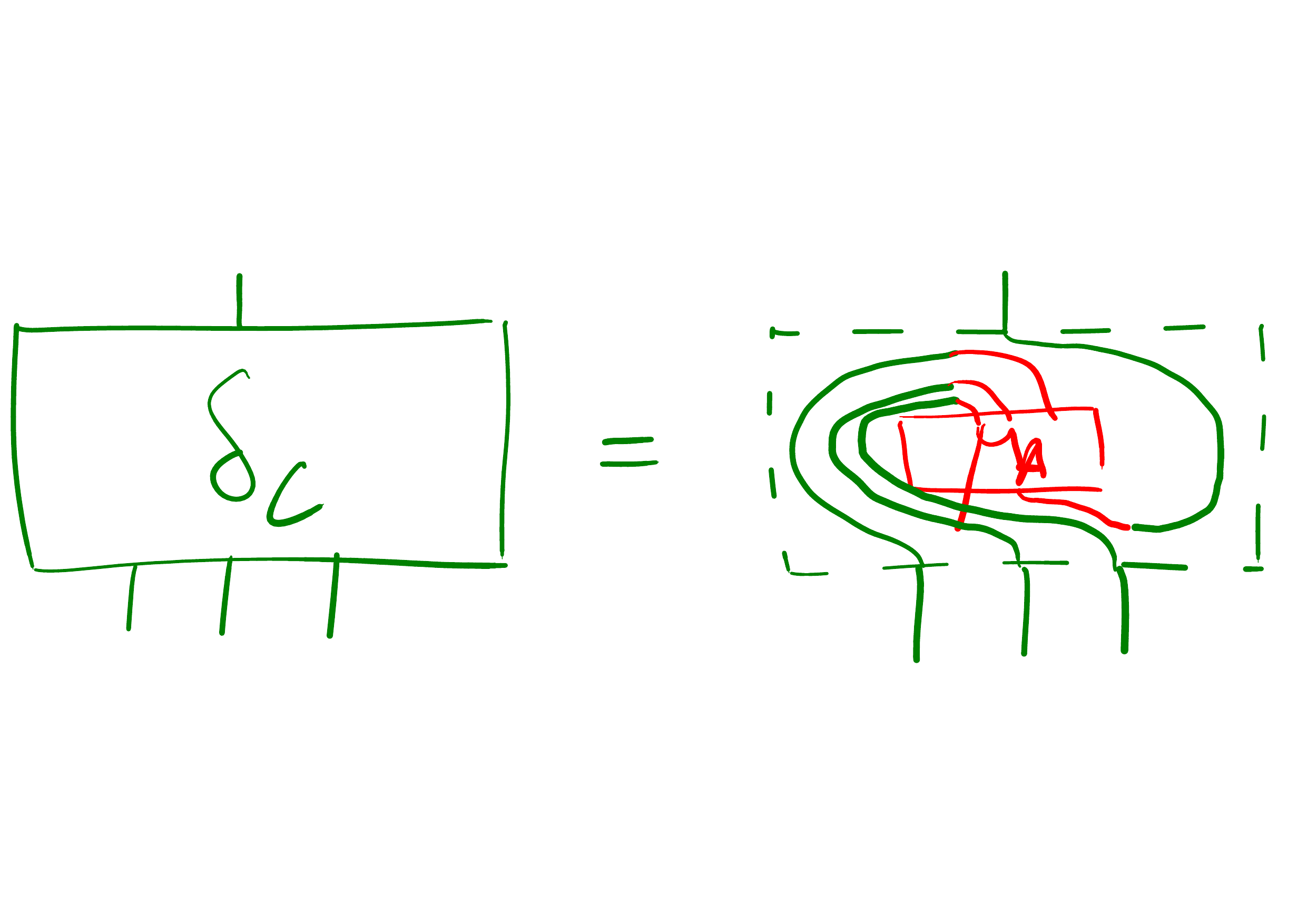}
      \caption{The \Ainf -coalgebra structure on $C=A^*$.}
      \label{fig:dual_Ainf_coalg}
\end{figure}

As for \Ainf -algebras, there is a chain complex associated with an \Ainf -coalgebra.
\begin{defi}\label{def:Cobar_Ainf_coalg} %Let $(C, \delta_C)$ be an \Ainf -coalgebra, and let the tensor algebra

Let $C$ be a vector space over $k$. Let $\Omega C$ stand for the algebra 
\e
\Omega C = \prod_{l\geq 0}C^l,
\e
with product 
\ea
\Pi\colon \Omega C \otimes \Omega C  &\to \Omega C ,\\
(c_1, \ldots, c_l) \otimes (c_1 ', \ldots, c_{l '}') &\mapsto  (c_1, \ldots, c_l, c_1 ', \ldots, c_{l '}') .\nonumber
\ea
A family of maps $\delta = (\delta^1,  \ldots)$ of the form $\delta^k\colon C \to C^k$, seen as a single map $\delta\colon C \to \Omega C$, uniquely extends as an algebra morphism 
\e
\overrightarrow{\delta}\colon \Omega C \to \Omega C.
\e
Furthermore, $\delta$ satisfies the \Ainf -relations if and only if $\overrightarrow{\delta}^2 =0$. In this case, $\Omega C$ is a dg algebra.
\end{defi}

\begin{exam} If $C=A^*$ is the dual of an \Ainf -algebra, then $\Omega C \simeq (BA)^*$.
\end{exam}

\begin{defi}\label{def:Ainf_comod} Let $(C, \delta_C)$ be an \Ainf -coalgebra. A left \Ainf -comodule $N$ is a $k$-vector space equipped with a collection of maps
\e
\delta_N = ( \delta_N^{0|1},  \delta_N^{1|1}, \ldots );\ \ \delta_N^{k|1}\colon  N \to C^k \otimes N
\e
satisfying  \Ainf -relations similar to those of $A$:

%\ea\label{eq:Ainf_md_rel}
%&\forall k\geq 0, \ \ \forall a_1, \ldots , a_k \in A, n\in N \\
%0 =& \sum_{k = k_1+ k_2; 1\leq i\leq k_1 +1} \mu_N^{k_1|1}(a_1, \ldots , a_{i-1},  \mu (a_i, \ldots ),  \ldots , a_k, n), \nonumber
%\ea

\e\label{eq:Ainf_comod_rel}
\forall k\geq 1, \ \ %\forall a_1, \ldots , a_k \in A, \\
0 = \sum_{k = k_1+ k_2; 1\leq i\leq k_1 +1} (id_C^{\otimes i-1} \otimes \delta  \otimes id_C^{\otimes k_1 - i-1} \otimes id_N )\circ \delta_N^{k_1|1}, \nonumber
\e
where the inner $\delta$ either stands for $\delta_N^{k_2|1}$ or $\delta_C^{k_2+1}$, depending on whether $i=k_1+1$ or not.

Likewise, a right \Ainf -comodule $M$ is a $k$-vector space with a collection of maps $\delta_M^{1|k} \colon M \to M\otimes C^k$ satisfying similar relations.
\end{defi}

\begin{figure}[!h]
    \centering
    \def\svgwidth{\textwidth}
   \includegraphics[scale=.15]{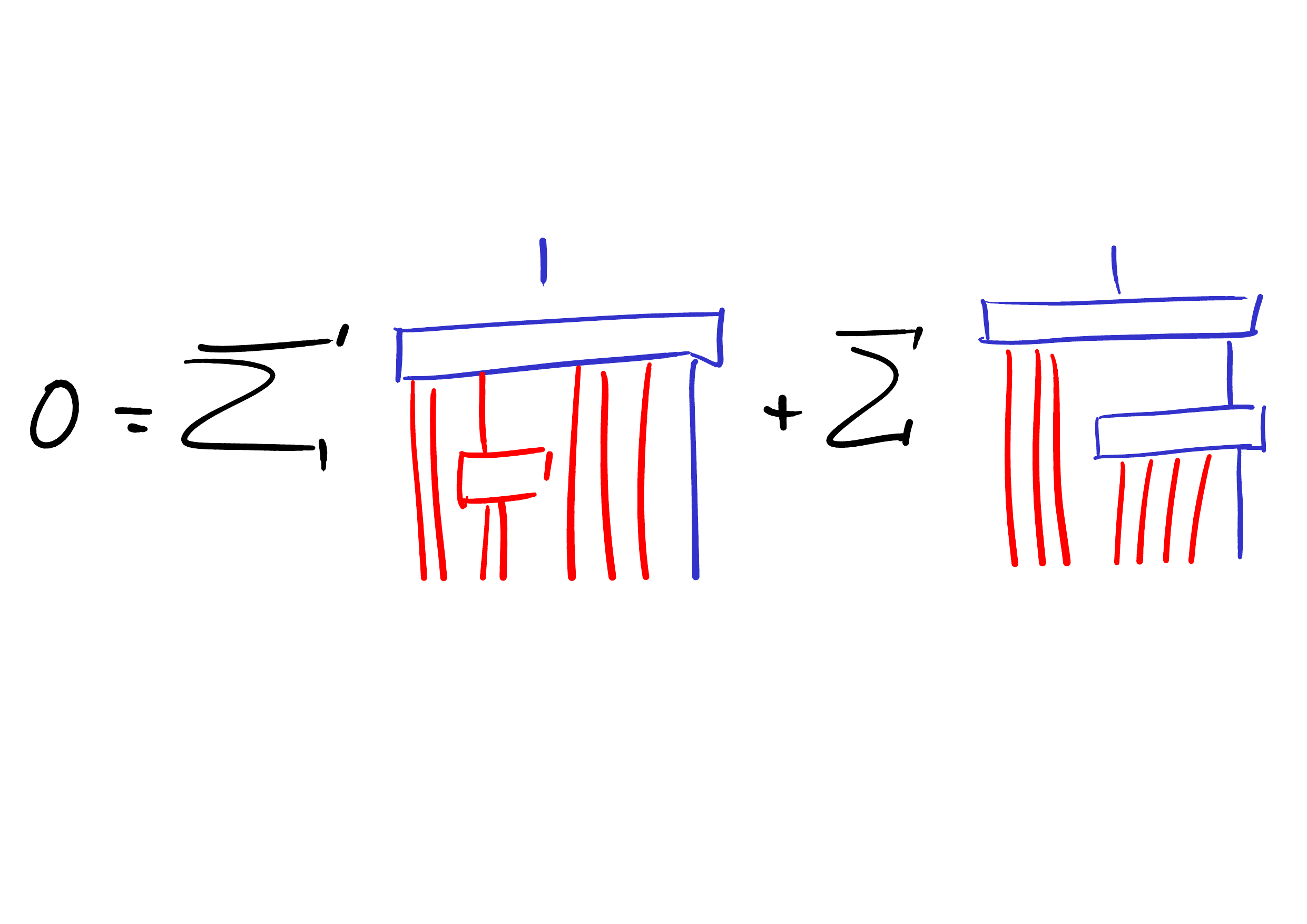}
      \caption{The \Ainf -relations for \Ainf -comodules.}
      \label{fig:Ainf_comod_rel}
\end{figure}

\begin{exam} If $M$ is a left (resp. right) \Ainf -module over $A$, then it is also a left (resp. right) \Ainf -comodule over $C= A^*$.
\end{exam}

As for \Ainf -modules, These structures can also be encoded in a chain complex:

\begin{defi}\label{def:cobar_Ainf_mod} Let $(C, \delta_C)$ be an \Ainf -coalgebra, and let $(M, \delta_M)$, $(N, \delta_N)$ be respectively  right and  left \Ainf -comodules. Let their cobar complex be the chain complex
\e
\Omega (M,C,N) = M\otimes \Omega C \otimes N,
\e
with  differential $\partial = \prod_{l\geq 0}{\partial_l}$ given by
\ea
%\partial (m, c_1, \ldots , c_k,n) &= \prod_{l\geq 0}{\partial_l (m, c_1, \ldots , c_k,n)} \text{, with}\\
\partial_l (m, c_1, \ldots , c_k,n) =& \sum_{0\leq i\leq k+1}{(m, c_1, \ldots , \delta ( c_i ), \ldots , c_k,n) }\\
& \in M\otimes C^l \otimes N \nonumber
\ea
where
\e
 \delta ( c_i ) = \begin{cases}  \delta_M^{1|l-k } ( m ) &\text{ if }i=0  \\
  \delta_C^{l-k+1} ( c_i ) &\text{ if }1\leq i\leq k\\
  \delta_N^{1|l-k } ( n) &\text{ if }i=k+1 
 \end{cases}
\e

It follows from the \Ainf -relations that $\partial ^2 = 0$.

The  operation analogous to the poduct of $\Omega C$ is 
\e
\Pi_{\Omega (M,C,N)} \colon \Omega (M,C,k) \otimes \Omega (k,C,N) \to \Omega (M,C,N),
\e
with $k$ standing respectively for the trivial left and right \Ainf\ $C$-comodule. It follows that $\Omega (M,C,k)$ (resp. $\Omega (k,C,N)$) is  a right (resp. left) dg-module over $\Omega C$.
\end{defi}

\begin{remark}\label{rem:cobar_dual_bar} The construction $\Omega (M,C,N)$ is dual to $B (M,A,N)$ in the sense that 
\e
B (M,A,N)^* \simeq \Omega (N^*,A^*,M^*).
\e
\end{remark}

\begin{remark}\label{rem:cobar_mike} In \cite{mike_equiv}, Miller Eismeier defines his cobar construction as $cB(M,A,N)= \mathrm{Hom}_A(B(M,A,k), N )$ for an \Ainf -algebra $A$ and a pair $M,N$ of right \Ainf -modules. This corresponds to our $\Omega (N^*, A^*, M)$.
\end{remark}

We can now define the co-Borel complex of a left $A$-module $M$ by
\e
C_A^-(M) = \Omega (k,A^*,M),
\e
which, as $C_A^+(M)$,  is a left dg-module over $\Omega A^*$ (and dually a left dg-comodule over $BA$).  Its differential $\partial^- = \partial^-_1 +\partial^-_2$ is drawn in Figure~\ref{fig:coBorel_diff}.

\begin{figure}[!h]
    \centering
    \def\svgwidth{\textwidth}
   \includegraphics[scale=.15]{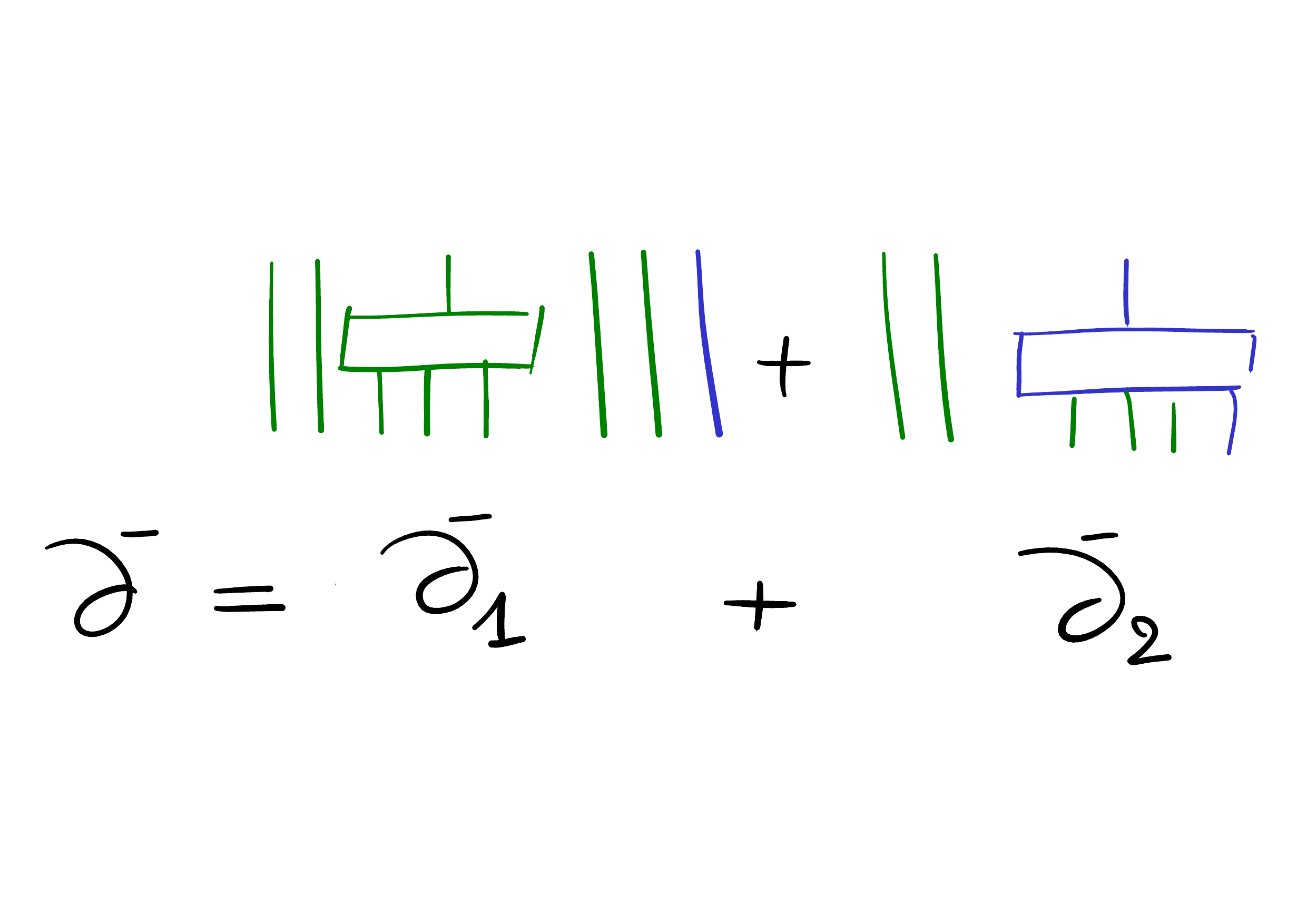}
      \caption{The two contributions to the differential of $C_A^-(M)$.}
      \label{fig:coBorel_diff}
\end{figure}

\subsection{The Tate complex}
\label{ssec:Tate_cpx}

For a $G$-space $X$, equivariant Borel and co-Borel homology respectively correspond to the homology of the homotopy quotient and the homotopy fixed points (which is a spectrum). Removing the word "homotopy", the fixed point set is included in the quotient. So one can consider the cone of this inclusion, and form a third homology group that will make the inclusion fit into a long exact sequence. This is roughly what Tate homology is supposed to be (though this is oversimplifying). It turns out that Tate homology usually enjoys nice properties, which makes the whole package more computable.

The actual homotopy construction is slightly more involved, and defining this inclusion map (the norm map) involves twisting the Borel construction by a dualizing object. To implement this, we follow \cite{mike_equiv} construction, and adapt it to the \Ainf\ setting.

These constructions are best understood in the language of \Ainf\ bimodules. There are several kinds of these (see Figure~\ref{fig:4kinds_bimod}):

\begin{defi}\label{def:Ainf_bimod} Let $A_1, A_2$ be two \Ainf-algebras, and  $C_1, C_2$  two \Ainf-coalgebras.
\begin{itemize}

\item An  $(A_1, A_2)$-bimodule $M$ is a vector space with a collection of operations
\e
\mu_M^{k_1|1|k_2}\colon  A_1^{k_1}\otimes M\otimes A_2^{k_2} \to M
\e
%satisfying the appropriate \Ainf -relations.

\item An  $(C_1, C_2)$-bimodule $M$ is a vector space with a collection of operations
\e
\delta_M^{l_1|1|l_2}\colon  M \to C_1^{l_1}\otimes M\otimes C_2^{l_2}  
\e
%satisfying the appropriate \Ainf -relations.

\item An  $(A_1, C_2)$-bimodule $M$ is a vector space with a collection of operations
\e
\nu_M^{k_1|1|l_2}\colon  A_1^{k_1}\otimes M\to M\otimes C_2^{l_2} 
\e
%satisfying the appropriate \Ainf -relations.

\item An  $(C_1, A_2)$-bimodule $M$ is a vector space with a collection of operations
\e
\chi_M^{l_1|1|k_2}\colon  M\otimes A_2^{k_2} \to C_1^{l_1}\otimes M
\e
\end{itemize}
All these should satisfy the appropriate \Ainf -relations, best explained graphically in Figures~\ref{fig:Ainf_bimod_rel1} and \ref{fig:Ainf_bimod_rel2}.

\begin{figure}[!h]
    \centering
    \def\svgwidth{\textwidth}
   \includegraphics[scale=.15]{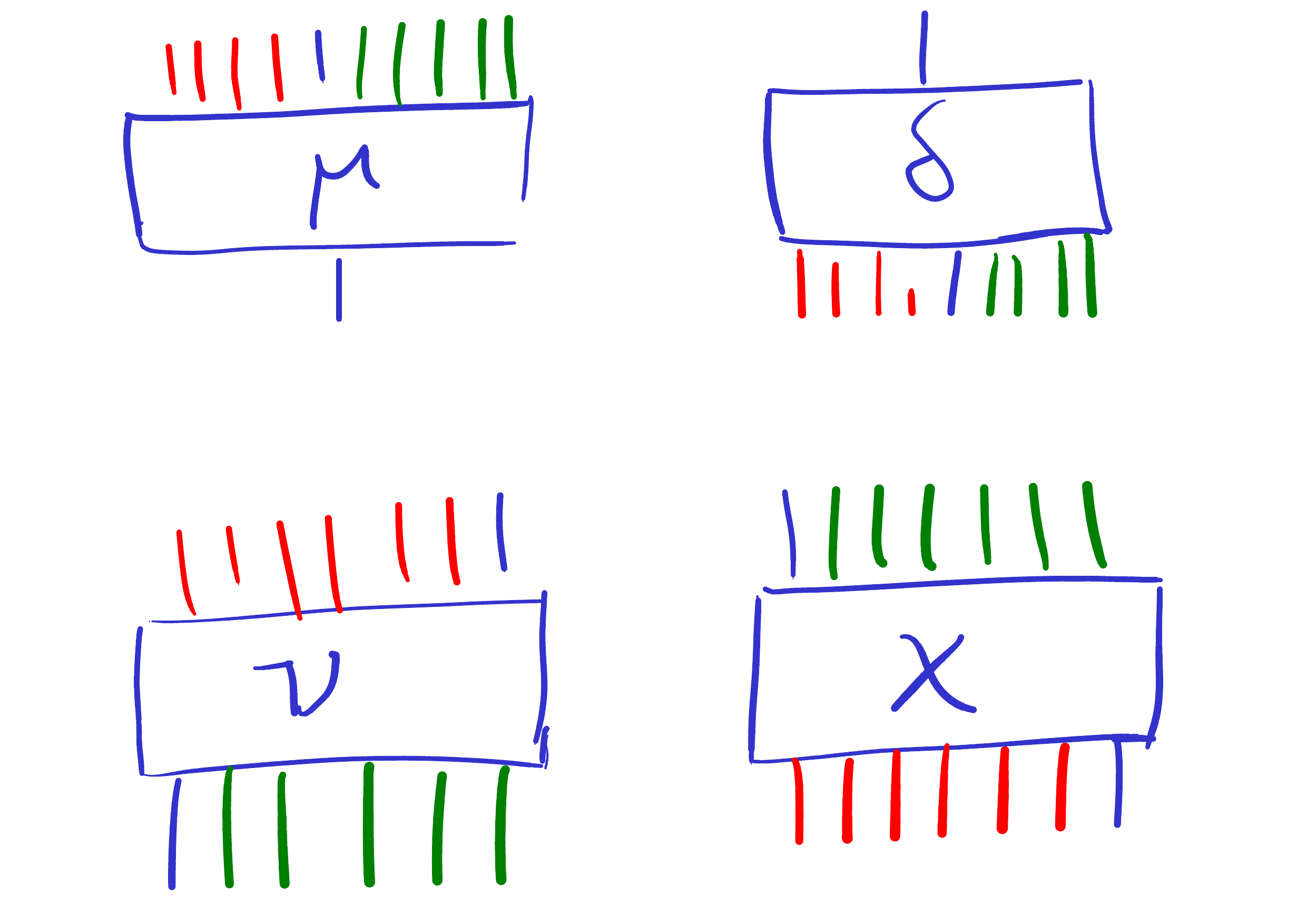}
      \caption{The four kinds of \Ainf-bimodules.}
      \label{fig:4kinds_bimod}
\end{figure}

\begin{figure}[!h]
    \centering
    \def\svgwidth{\textwidth}
   \includegraphics[scale=.15]{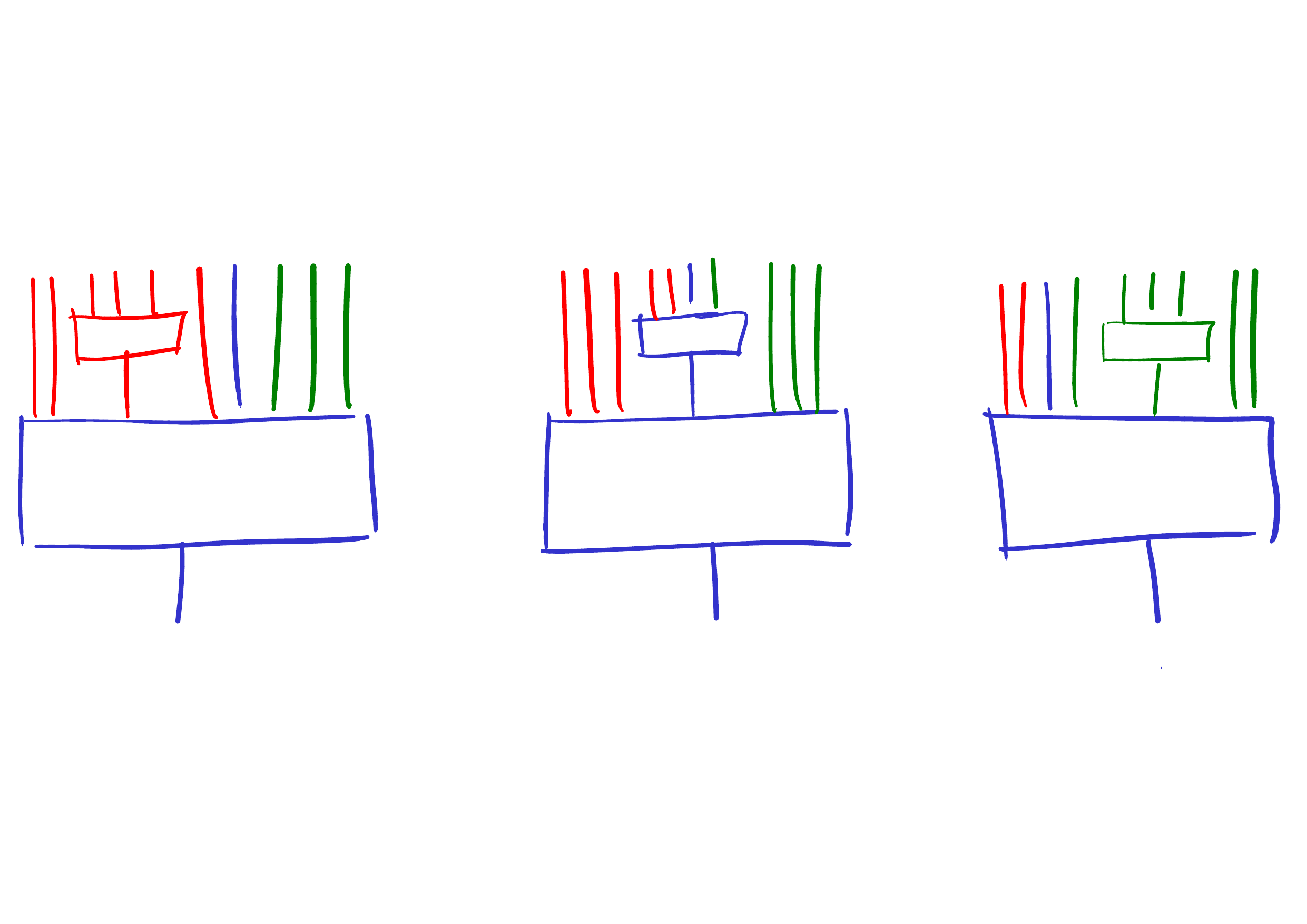}
      \caption{The \Ainf-relations for $(A_1, A_2)$-bimodules.}
      \label{fig:Ainf_bimod_rel1}
\end{figure}

\begin{figure}[!h]
    \centering
    \def\svgwidth{\textwidth}
   \includegraphics[scale=.15]{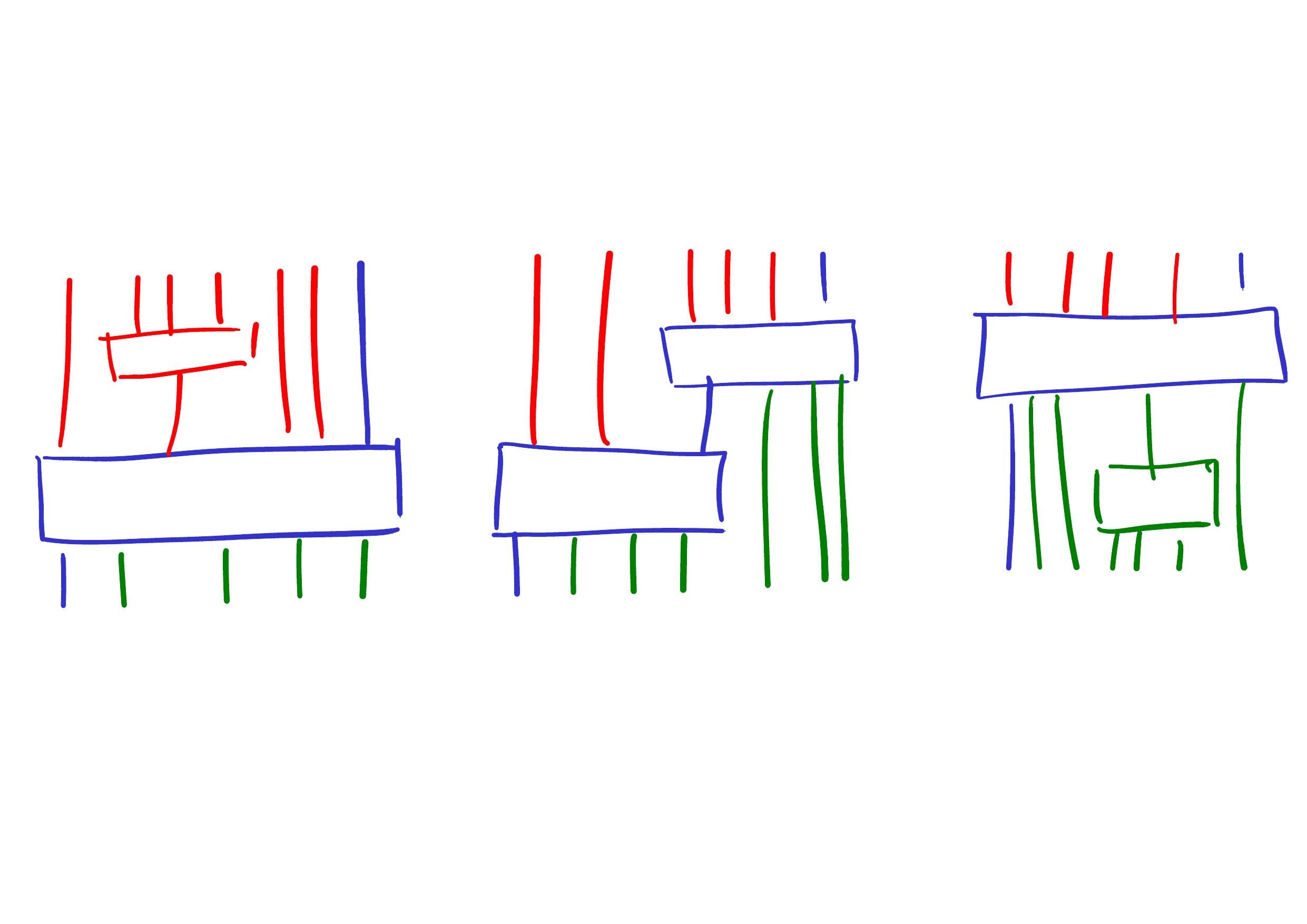}
      \caption{The \Ainf-relations for $(A_1, C_2)$-bimodules.}
      \label{fig:Ainf_bimod_rel2}
\end{figure}

\end{defi}

Following Morita theory,  it is helpful to think about bimodules as morphism between \Ainf-algebras or \Ainf -coalgebras.  The Bar and cobar constructions then give a way to compose them:

\begin{prop}(Composition of \Ainf -bimodules)\label{prop:bar_cobar_compo_bimod} 
Let $G_1,G_2, G_3$ stand for either \Ainf-algebras or \Ainf-coalgebras, let $M$ be a $(G_1, G_2)$-bimodule and $N$ be a $(G_2, G_3)$-bimodule. Then,
\begin{itemize}
\item if $G_2$ is an \Ainf -algebra, then $B(M, G_2, N)$ is a $(G_1, G_3)$-bimodule,
\item if $G_2$ is an \Ainf -coalgebra, then $\Omega(M, G_2, N)$ is a $(G_1, G_3)$-bimodule.
\end{itemize}

\end{prop}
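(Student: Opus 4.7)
The plan is to give explicit formulas for the new bimodule operations on $B(M,G_2,N)$ (resp.\ $\Omega(M,G_2,N)$), and then verify the $A_\infty$-bimodule relations by the same graphical cancellation argument that establishes $\partial^2=0$ for the bar/cobar differential. The key observation is that the bar differential only uses the inner $G_2$-module structures of $M$ and $N$ together with $\mu_{G_2}$; the outer $G_1$-bimodule structure on $M$ and the outer $G_3$-bimodule structure on $N$ remain available, and are precisely what will provide the $(G_1,G_3)$-bimodule structure on $B(M,G_2,N)$.

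Explicitly, when $G_1$ and $G_3$ are both $A_\infty$-algebras, I take $\mu^{0|1|0}$ to be the bar differential of Definition~\ref{def:Bar_Ainf_mod} and, for $(k_1,k_3)\neq (0,0)$, I set
\begin{align*}
&\mu^{k_1|1|k_3}_{B(M,G_2,N)}\bigl(a_1,\ldots,a_{k_1}\,;\,(m,c_1,\ldots,c_j,n)\,;\,b_1,\ldots,b_{k_3}\bigr) \\
&\qquad = \sum_{i+i'\leq j}\bigl(\mu_M^{k_1|1|i}(a_1,\ldots,a_{k_1},m,c_1,\ldots,c_i),\,c_{i+1},\ldots,c_{j-i'},\,\mu_N^{i'|1|k_3}(c_{j-i'+1},\ldots,c_j,n,b_1,\ldots,b_{k_3})\bigr),
\end{align*}
with the convention that when $k_1=0$ (resp.\ $k_3=0$) the corresponding $\mu_M$ (resp.\ $\mu_N$) factor is simply omitted and the affected $c$'s are left in place. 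The three cases in the statement involving a coalgebra $G_1$ or $G_3$ are handled analogously, with $\mu$'s replaced by the appropriate $\nu$, $\chi$, or $\delta$ operations of Definition~\ref{def:Ainf_bimod} and with the resulting coalgebra outputs placed on the outside of the tensor. The case where $G_2$ is an $A_\infty$-coalgebra and the complex is $\Omega(M,G_2,N)$ is entirely parallel: one inserts coproducts on the middle strand and prepends/appends coaction outputs from $\delta_M$, $\delta_N$ on the left/right. Alternatively, this case can be deduced from the algebra case by dualisation using Remark~\ref{rem:cobar_dual_bar}.

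To verify the $A_\infty$-bimodule relations, I would expand any composition of two of these operations as a sum over diagrams carrying two insertion nodes on the element $(m,c_1,\ldots,c_j,n)$. These split into four families according to where the two nodes land: (i) both on the $M$-side, (ii) both on the $N$-side, (iii) both inside the middle $G_2$-strand, and (iv) one on each of the $M$- and $N$-sides. Families (i), (ii) and (iii) cancel internally by the $A_\infty$-bimodule relations for $M$, those for $N$, and the $A_\infty$-relations for $G_2$, respectively; family (iv) cancels because each of its terms is produced in exactly two ways, corresponding to the two orders in which the disjoint $M$-side and $N$-side insertions can be applied. This is a direct adaptation of the standard proof that the bar/cobar differential squares to zero, and the main obstacle is purely combinatorial bookkeeping --- in particular, carefully matching up pairs of insertions that straddle the $M$/$G_2$ or $G_2$/$N$ boundaries --- but no new structural ideas are required beyond those already encoded in Definition~\ref{def:Bar_Ainf_mod}.
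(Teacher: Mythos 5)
The paper states this proposition without a proof, so there is nothing in the source to compare against; your plan of writing explicit structure maps on $B(M,G_2,N)$ and checking the $A_\infty$-relations by the same cancellation count that gives $\partial^2=0$ is the natural one and matches the paper's style. However, your formula for $\mu_B^{k_1|1|k_3}$ when \emph{both} $k_1>0$ and $k_3>0$ is incorrect: inserting $\mu_M^{k_1|1|i}$ and $\mu_N^{i'|1|k_3}$ simultaneously in a single structure map violates the $A_\infty$-bimodule relation of Figure~\ref{fig:Ainf_bimod_rel1}. A minimal counterexample: take $G_1,G_2,G_3$ strictly associative with zero differential and $M,N$ strict dg bimodules, so the only nonzero operations are $\mu^2_{G_i}$, $\mu_M^{1|1|0},\mu_M^{0|1|1},\mu_N^{1|1|0},\mu_N^{0|1|1}$. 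Your formula then gives $\mu_B^{1|1|1}(a,(m,c,n),b)=(am,c,nb)$ while $\mu_B^{2|1|0}$ and $\mu_B^{2|1|1}$ vanish, and the $A_\infty$-bimodule relation evaluated on $(a_1,a_2,(m,c,n),b)$ collapses to the three nonzero terms
\[
\mu_B^{1|1|0}\bigl(a_1,\mu_B^{1|1|1}(a_2,x,b)\bigr)+\mu_B^{1|1|1}\bigl(a_1,\mu_B^{1|1|0}(a_2,x),b\bigr)+\mu_B^{1|1|1}\bigl(\mu_{G_1}^2(a_1,a_2),x,b\bigr),
\]
each equal to $\bigl((a_1a_2)m,c,nb\bigr)$; over $k=\mathbb{F}_2$ the sum is $\bigl((a_1a_2)m,c,nb\bigr)\neq 0$.

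The fix --- and the convention already forced on you by the strict dg tensor product $M\otimes_{G_2}N$, where $\mu^{1|1|1}$ must vanish --- is to declare $\mu_B^{k_1|1|k_3}=0$ whenever $k_1>0$ and $k_3>0$, keeping your formulas only for the one-sided maps $\mu_B^{k_1|1|0}$ and $\mu_B^{0|1|k_3}$ (and $\mu_B^{0|1|0}=\partial_{\mathrm{Bar}}$). With that choice every term in the $A_\infty$-relation genuinely carries exactly two insertion nodes, and your four-family cancellation argument goes through; in particular your family (iv) cancellation is exactly the pairing of the two compositions $\mu_B^{k_1|1|0}\circ\mu_B^{0|1|k_3}$ and $\mu_B^{0|1|k_3}\circ\mu_B^{k_1|1|0}$, which produce identical outputs. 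With the simultaneous formula, by contrast, the $\mu_B\circ\mu_B$ terms in which \emph{both} composed operations act on both sides carry four insertion nodes (a nested pair on the $M$-side and another nested pair on the $N$-side), and your families (i)--(iv) give no mechanism for cancelling those; the counterexample above is the simplest place this shows up.
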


\begin{exam} An \Ainf -algebra $A$ is an $(A,A)$-bimodule.
\end{exam}

\begin{exam} An $(A_1,A_2)$-bimodule is also an $(A_1^*,A_2)$-bimodule and an an $(A_1,A_2^*)$-bimodule.
\end{exam}

From these two observations we get:

\begin{defi}\label{def:dualizing_bimod}Let $A$ be an \Ainf -algebra, its dualizing bimodule $D(A)$ is $A$, seen as an $(A^*,A)$-bimodule.
\end{defi}
This bimodule can be used for twisting the definition of the Borel complex.
\begin{defi}\label{def:twisted_Borel_cpx}Let $M$ be a left \Ainf -module over an \Ainf -algebra $A$, the twisted Borel complex is the dg $\Omega C$-module defined as 
\ea
\Ctil^+_A(M) &= B(D(A), A, M)\\
&= \bigoplus_{k\geq 0}\prod_{l\geq 0}{ C^l \otimes A\otimes A^k\otimes M}.
\ea
In the pictures, we will color strands corresponding  to $C^l$, $A$, $A^k$ and $M$ respectively in green, orange, red and blue.

The differential  of this complex can be decomposed in four contributions
\e
\widetilde{\partial}^+ = \widetilde{\partial}^+_1 +\widetilde{\partial}^+_2 +\widetilde{\partial}^+_3 +\widetilde{\partial}^+_4 , 
\e
as drawn in Figure~\ref{fig:twisted_Borel_diff}.
\end{defi}

\begin{figure}[!h]
    \centering
    \def\svgwidth{\textwidth}
   \includegraphics[scale=.15]{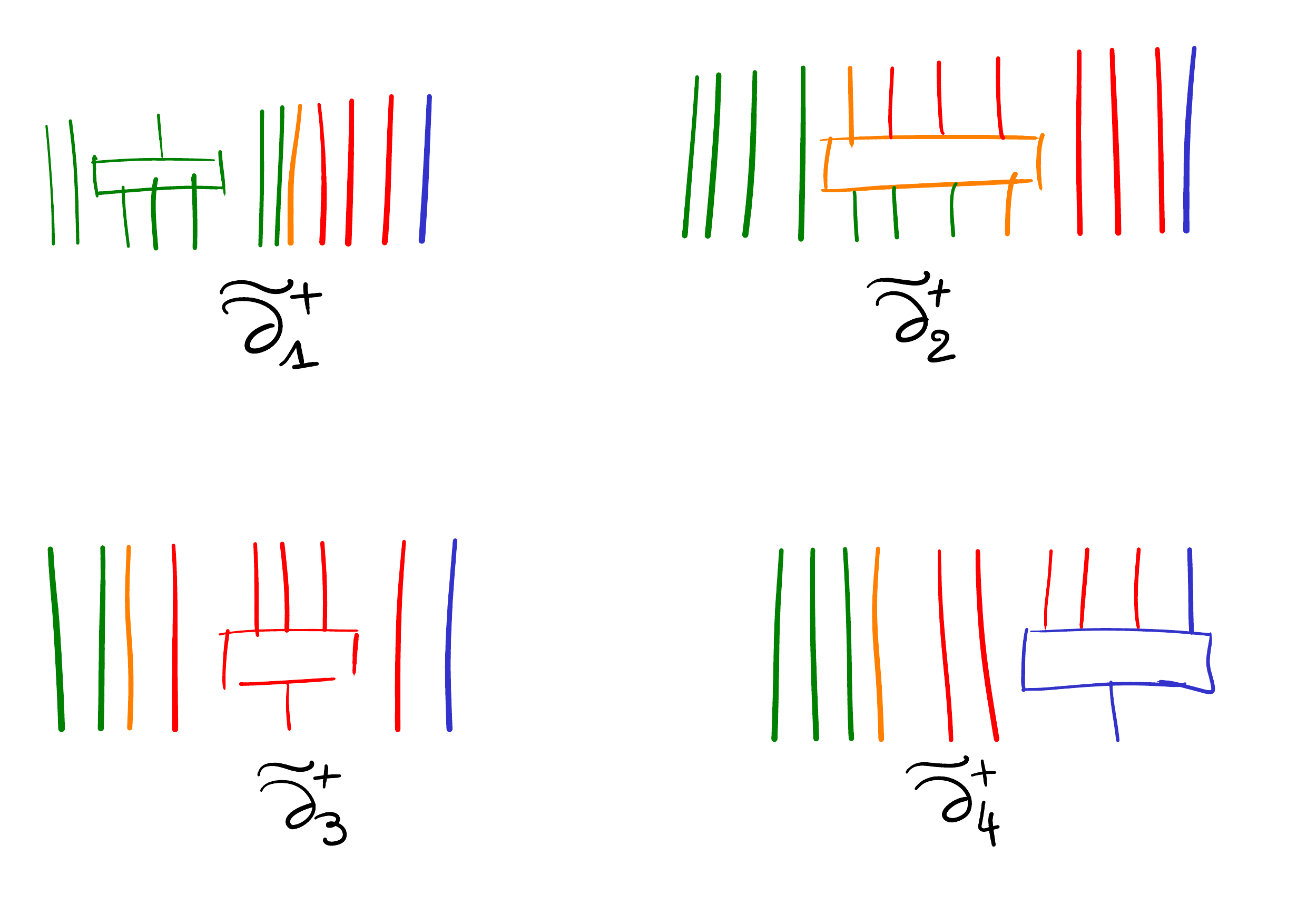}
      \caption{The four contributions to the differential of $\Ctil^+_A(M)$.}
      \label{fig:twisted_Borel_diff}
\end{figure}

To define the fourth complex, consider the following map. 
\begin{defi}\label{def:Norm_map} The norm map
\e
N_M\colon \Ctil^+_A(M) \to C^-_A(M)
\e
is defined by combining all the maps 
\e
N^{k,l}_{l'} \colon  C^l \otimes A\otimes A^k\otimes M \to  C^{l'} \otimes M
\e
drawn in Figure~\ref{fig:Norm_map}.
\end{defi}

\begin{figure}[!h]
    \centering
    \def\svgwidth{\textwidth}
   \includegraphics[scale=.15]{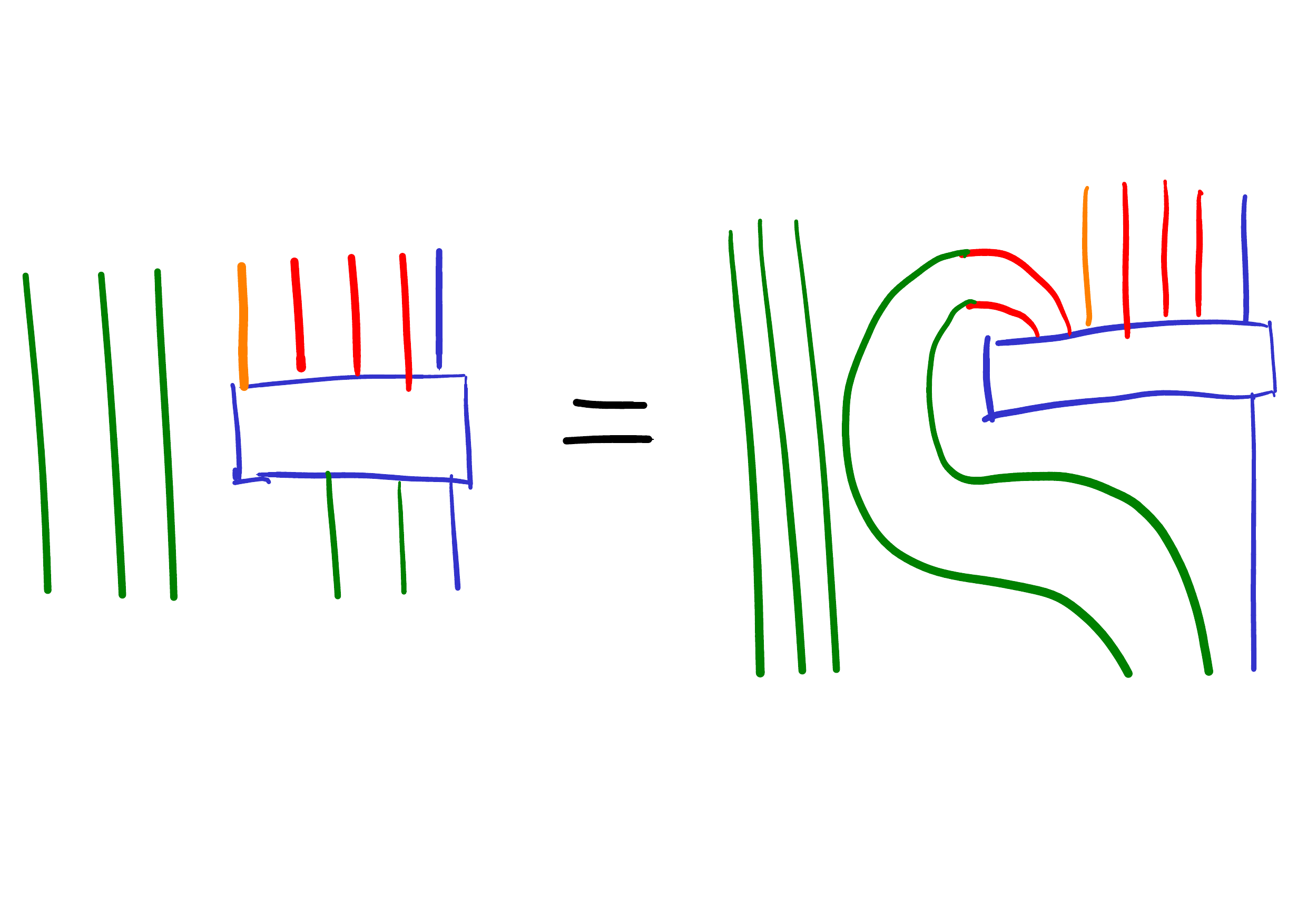}
      \caption{The maps $N^{k,l}_{l'}$ contributing to $N_M$.}
      \label{fig:Norm_map}
\end{figure}

\begin{prop}\label{prop:Norm_commutes_mod_str_and_diff} The norm map $N_M$ is $\Omega C$-equivariant, and commutes with the differentials $\partial^-, \widetilde{\partial}^+$ of $C^-_A(M)$ and $\Ctil^+_A(M)$. 
\end{prop}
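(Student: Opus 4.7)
The plan is to verify both assertions by direct inspection of the graphical definitions, using only the \Ainf -relations for the algebra $A$, the dual coalgebra $C = A^*$, the bimodule $D(A)$, and the module $M$.

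For the $\Omega C$-equivariance, I would argue directly from the pictures. The left $\Omega C$-module structure on both $\Ctil^+_A(M) = B(D(A), A, M)$ and $C^-_A(M) = \Omega(k, A^*, M)$ is simply concatenation of tensors on the leftmost $C$-strands. Each component $N^{k,l}_{l'}$ leaves those $C^l$ input strands untouched and only acts on the interior $A \otimes A^k \otimes M$, producing the extra strands $C^{l'-l}$ that get appended just after the preserved $C^l$. Since prepending tensors on the far left commutes tautologically with any operation affecting only the interior of the word, this part of the statement is immediate from the very shape of $N_M$.

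For the commutation $\partial^- \circ N_M = N_M \circ \widetilde{\partial}^+$, I would expand both sides as sums of elementary diagrams, then regroup and cancel using the \Ainf -relations. The differential $\widetilde{\partial}^+$ has four contributions (coalgebra operation on the green $C^l$, $(C, A)$-bimodule operation $\chi_{D(A)}$ on the orange $A$, algebra operation on the red $A^k$, and module operation on the blue $M$, see Figure~\ref{fig:twisted_Borel_diff}), while $\partial^-$ has two (Figure~\ref{fig:coBorel_diff}). Composing with $N_M$ produces, in each case, diagrams with exactly one ``norm vertex'' and one ``differential vertex''. I would then match the terms by grouping them according to shared context: in each group I fix everything in the picture except the local piece near one pair of adjacent vertices, and the remaining sum over the varying piece is then exactly one of the \Ainf -relations (for $C$, $A$, $D(A)$, or $M$), and therefore vanishes. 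Summing over all choices of fixed context yields the claimed identity.

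The main obstacle will be the combinatorial bookkeeping: the norm map is itself indexed by three integers $k, l, l'$, and the two differentials are further sums indexed by positions and internal arities, so one must identify the correct pairing in order that each group of terms coincides precisely with one of the \Ainf -relations rather than leaving a leftover boundary term. The graphical formalism of Figures~\ref{fig:bar_diff}, \ref{fig:coBorel_diff}, \ref{fig:twisted_Borel_diff} and \ref{fig:Norm_map} should make this bookkeeping transparent, and by analogy with the standard verification that $\partial^2 = 0$ on a two-sided Bar/cobar construction, I expect that no boundary terms remain.
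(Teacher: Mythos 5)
Your plan is correct at the level of strategy and takes essentially the same route as the paper: for the $\Omega C$-equivariance you correctly note it is tautological from the shape of $N^{k,l}_{l'}$ (the paper's Figure~\ref{fig:Norm_commutes_mod_str} just records this), and for the commutation with the differentials you correctly identify that both sides expand into diagrams with one norm vertex and one differential vertex which must cancel via \Ainf -relations.

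The one place where the paper's bookkeeping is organized differently, and more efficiently, than what you anticipate: you expect the cancellation to split into several independent groups, one witnessing each of the \Ainf -relations for $C$, $A$, $D(A)$, and $M$ separately. The paper instead proceeds in two phases. First, among the contributions to $\partial^-_1 \circ N_M$ it singles out those where the green $\delta_C$ does \emph{not} collide with the norm vertex; these slide past the norm vertex and cancel tautologically, in pairs, against the corresponding contributions to $N_M \circ \widetilde{\partial}^+_1$. Second — and this is the step you would want to find rather than assume — all the \emph{remaining} terms are gathered, the green vertical lines are removed, and the surviving green $C$-strands are dualized back to red $A$-strands; at that point the entire remaining sum is recognized as a single instance of the \Ainf -relations for $M$ as an $A$-module. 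The reason your anticipated several groups collapse into one is that $\delta_C$ and the bimodule map on $D(A)$ are both dual rewritings of $\mu_A$, so after dualization they are not independent relations but all pieces of the module relation. Carrying out your plan should still work, but you would likely discover this collapse along the way; the paper's formulation streamlines exactly that discovery.
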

\begin{proof}The fact that $N_M$ is $\Omega C$-equivariant is straightforward, and is explained in Figure~\ref{fig:Norm_commutes_mod_str}.

\begin{figure}[!h]
    \centering
    \def\svgwidth{\textwidth}
   \includegraphics[scale=.15]{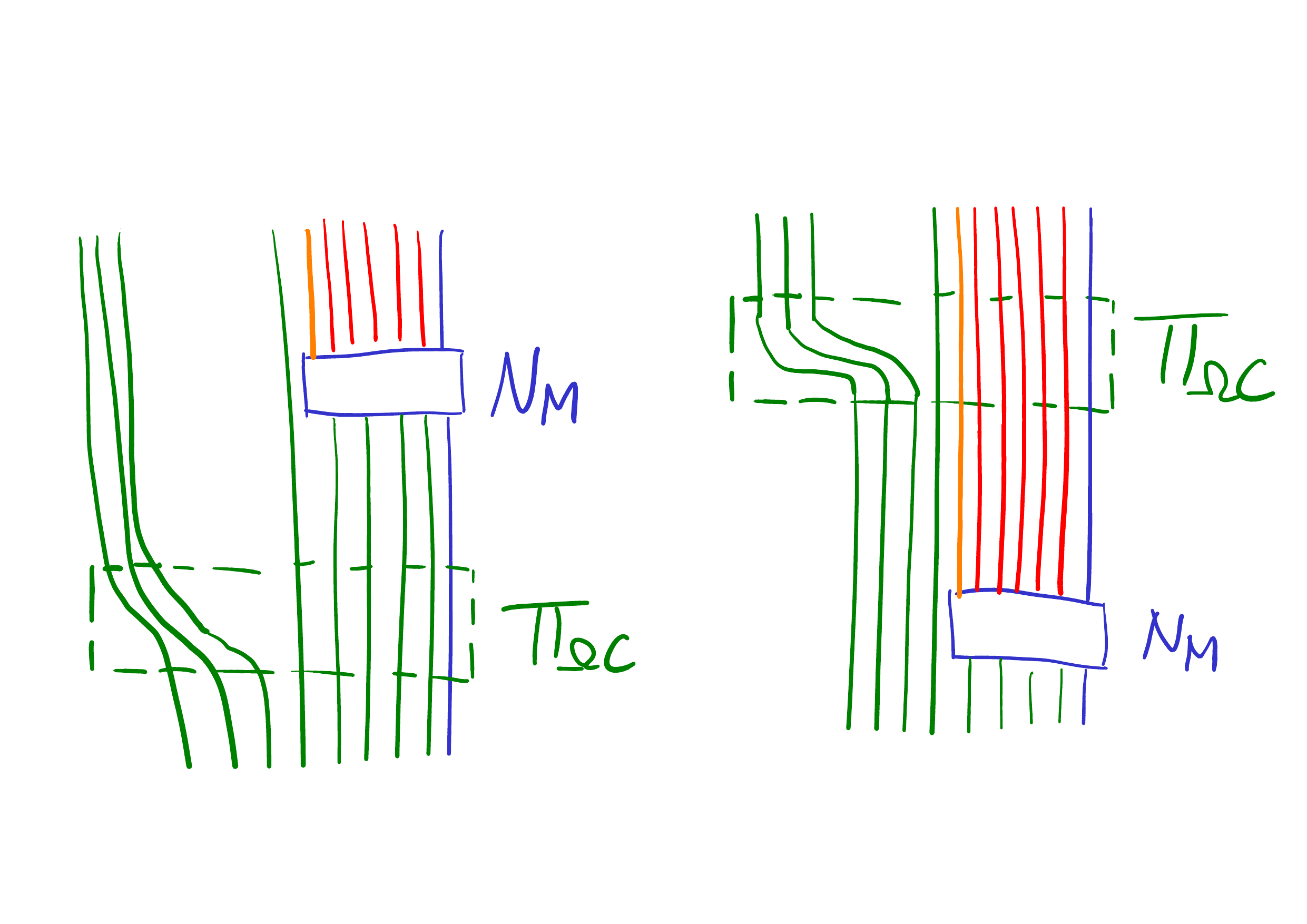}
      \caption{The norm $N_M$ commutes with the $\Omega C$-module structures.}
      \label{fig:Norm_commutes_mod_str}
\end{figure}

We have drawn the contributions to $\partial^- N_M$ and $ N_M \widetilde{\partial}^+$ respectively in Figures~\ref{fig:Norm_comm_diff1} and \ref{fig:Norm_comm_diff2}. Observe that $\partial^-_1 N_M$ has two kinds of contributions, depending on whether the (green) $\delta_C$ operation collides with   $N_M$ or not. If they don't, their positions can be exchanged, and these contributions will cancel in pairs with those of $ N_M \widetilde{\partial}^+_1$. Putting altogether all the other contributions, removing the green vertical lines and dualizing the remaining green strands, one recognizes the \Ainf -relations for $M$. The claim follows.

\begin{figure}[!h]
    \centering
    \def\svgwidth{\textwidth}
   \includegraphics[scale=.15]{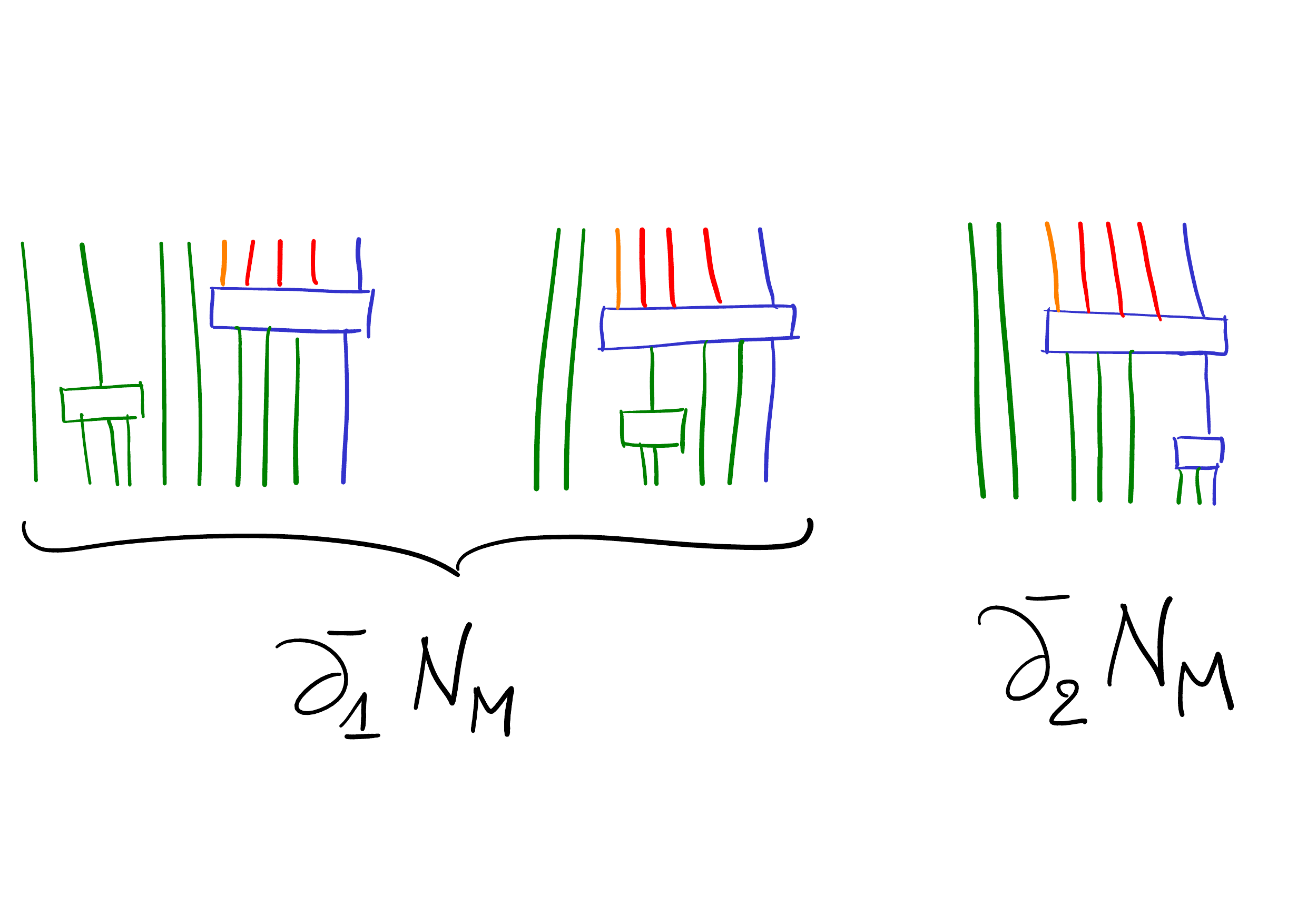}
      \caption{The contributions to $\partial^- N_M$.}
      \label{fig:Norm_comm_diff1}
\end{figure}

\begin{figure}[!h]
    \centering
    \def\svgwidth{\textwidth}
   \includegraphics[scale=.15]{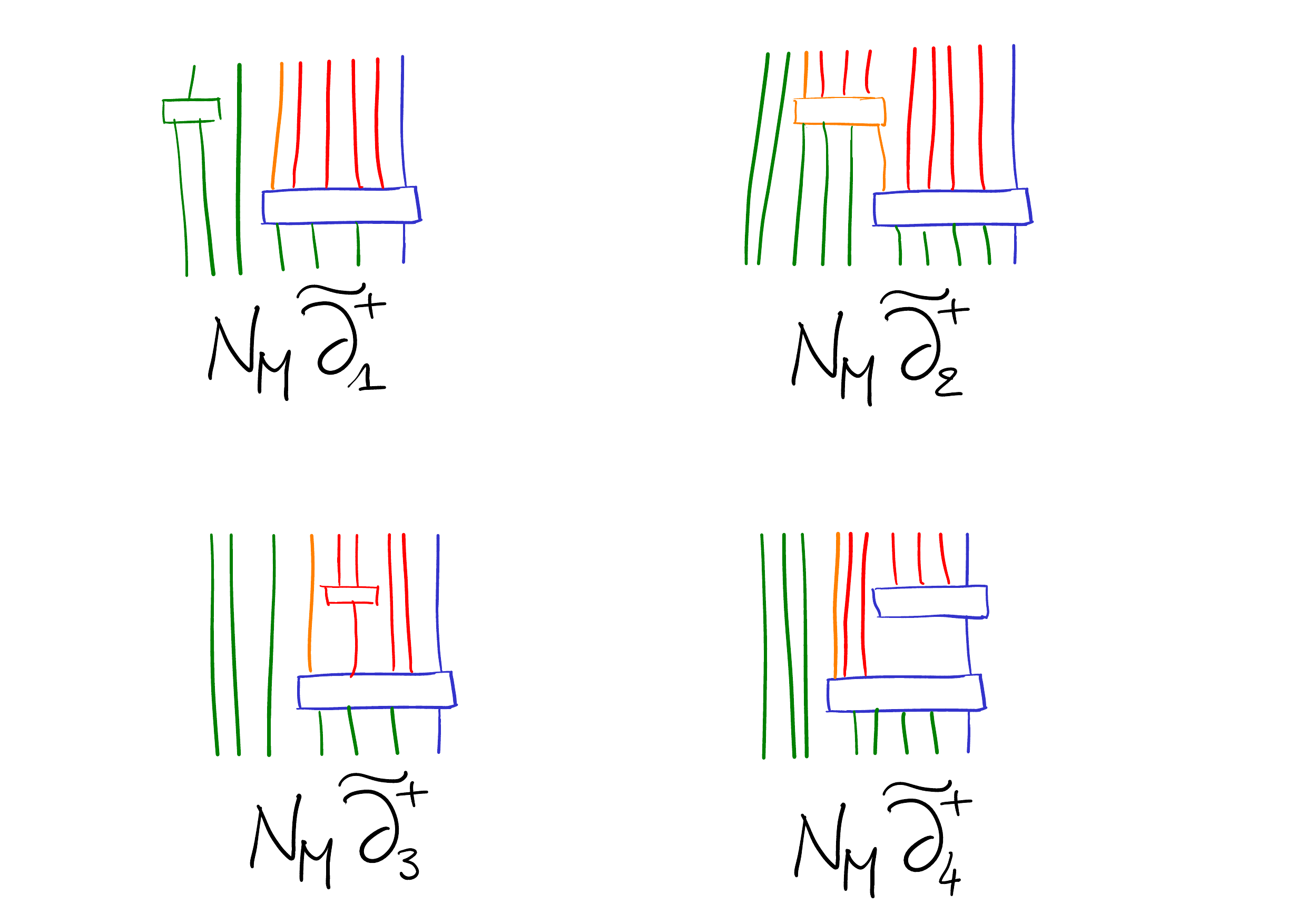}
      \caption{The contributions to $N_M \widetilde{\partial}^+$.}
      \label{fig:Norm_comm_diff2}
\end{figure}

\end{proof}

We can then define the Tate complex as the $\Omega C$ dg-module:
\e
C_A^\infty(M) = \mathrm{Cone}(N_M).
\e

%\subsection{}
%\label{ssec:}
%
%\begin{remark}\label{rem:}
%\end{remark}
%
%\begin{defi}\label{def:}
%\end{defi}
%
%\begin{prop}\label{prop:} 
%\end{prop}
%
%\begin{figure}[!h]
%    \centering
%    \def\svgwidth{\textwidth}
%   \includegraphics[scale=.15]{.pdf}
%      \caption{.}
%      \label{fig:}
%\end{figure}

%\begin{figure}[!h]
%    \centering
%    \def\svgwidth{\textwidth}
%   \includegraphics[scale=.15]{.pdf}
%      \caption{.}
%      \label{fig:}
%\end{figure}

%\begin{figure}[!h]
%    \centering
%    \def\svgwidth{\textwidth}
%   \includegraphics[scale=.15]{.pdf}
%      \caption{.}
%      \label{fig:}
%\end{figure}

%\begin{figure}[!h]
%    \centering
%    \def\svgwidth{\textwidth}
%   \includegraphics[scale=.15]{.pdf}
%      \caption{.}
%      \label{fig:}
%\end{figure}

%\begin{figure}[!h]
%    \centering
%    \def\svgwidth{\textwidth}
%   \includegraphics[scale=.15]{.pdf}
%      \caption{.}
%      \label{fig:}
%\end{figure}

%\begin{figure}[!h]
%    \centering
%    \def\svgwidth{\textwidth}
%   \includegraphics[scale=.15]{.pdf}
%      \caption{.}
%      \label{fig:}
%\end{figure}

%\begin{figure}[!h]
%    \centering
%    \def\svgwidth{\textwidth}
%   \includegraphics[scale=.15]{.pdf}
%      \caption{.}
%      \label{fig:}
%\end{figure}

\section{The $A_\infty$-algebra and module structures in Morse theory}
\label{sec:Ain_alg_mod_Morse}

\subsection{Abstract trees}
\label{ssec:abstract_trees} We now recall some definitions on trees, and set our notation and terminology.

\begin{defi}\label{def:tree}By a \emph{tree} $\tau$ we will mean what is usually called a rooted ribbon tree,  consisting in:
\begin{itemize}
\item A finite set of vertices $\mathrm{Vert}(\tau)$,
\item A finite set of edges of three kinds: internal edges, leaves, and (exactly one) root $r_\tau$. Leaves and roots are also referred to as external, or semi-infinite edges.
\e
\mathrm{Edges}(\tau) = \mathrm{IntEd}(\tau)\cup \mathrm{Leaves}(\tau) \cup \mathrm{Root}(\tau).
\e
The set of leaves is supposed to be ordered (corresponding to the ribbon condition).

\item Source and Target maps. Internal edges have a source and a target:
\e
s,t\colon \mathrm{IntEd}(\tau) \to \mathrm{Vert}(\tau).
\e
Leaves have a target and no source:
\e
t\colon \mathrm{Leaves}(\tau) \to \mathrm{Vert}(\tau).
\e
And the root has a source and no target (the root vertex $v_\tau = s(r_\tau)$)
\e
s\colon \mathrm{Root}(\tau) \to \mathrm{Vert}(\tau).
\e
\end{itemize}
These are such that:
\begin{itemize}

\item There are no cycles (i.e. a cyclic sequence of internal edges $e_1, \ldots , e_k$ with $t(e_i) = s(e_{i+1})$ and $t(e_k) = s(e_{1})$).
\item Each vertex is the source of exactly one edge, and the target of at least two edges. We will denote $\mathrm{In}(v)$ and $\mathrm{Out}(v)$ the sets of incoming and outgoing edges. %$\mathrm{In}(v)$ is ordered and 
$\mathrm{Out}(v)$ consists in one element. The arity of $v$ is the number of incoming edges $ar(v) = \# \mathrm{In}(v) \geq 2 $, and the  valency $val(v) = 1 + ar(v)$ the number of adjascent edges. We say that a tree is trivalent if 
$val(v)= 3$ for each $v$.
\item The order on $\mathrm{Leaves}(\tau)$ defines an order on each $\mathrm{In}(v)$ in the following way: if $\gamma$ and $\gamma'$ are two paths going from leaves $l,l'$ to $e,e'\in \mathrm{In}(v)$, then we order $e$ and $e'$ the same way as $l,l'$ (i.e. we want this order to be independent on the choices of the ``ancestors'' $l,l'$ of $e,e'$).
\end{itemize}
\end{defi}

\begin{defi}\label{def:kappa_n}For $n\geq 2$, let $\kappa_n$ denote the set of isomorphism classes of trees with $n$  inputs. Here we use the obvious isomorphism notion: bijections on vertices and edges preserving the order on leaves. This is the same as the set of $n$-bracketings.
\end{defi}

\begin{defi}\label{def:metric_tree} A \emph{metric tree} $T = (\tau, l)$ consists in a tree $\tau$, with a length function on the set of internal edges:
\e
l\colon \mathrm{IntEd}(\tau) \to [0, +\infty)
\e

\end{defi}

We will regard metric trees modulo equivalence.

\begin{defi}\label{def:edge_collapse}
Define the equivalence relation on metric trees generated by either:
\begin{itemize}
\item If $\tau\simeq \tau'$ and under this identification $l\simeq l'$, then we declare $(\tau, l) \sim (\tau, l)$.
\item If $\tau\in \kappa_n$ and $e\in \mathrm{IntEd}(\tau)$, let $\tau(e)$ consist in the tree obtained from $\tau$ by collapsing $e$ (i.e. by removing $e$ and merging $s(e)$ 	and $t(e)$). If $l(e) = 0$, then we declare $T=(\tau,l)$ to be equivalent to $T' = (\tau(e), l')$, with $l'=l$ on $\mathrm{IntEd}(\tau') =  \mathrm{IntEd}(\tau)\setminus \lbrace e\rbrace$.
\end{itemize}
We say that $T$ is \emph{irreducible} if $l(e) > 0$ for every edge. Each metric tree can be put in a unique irreducible form.
\end{defi}

\begin{defi}\label{def:K_n}
The space $K_n$ of equivalence classes of metric trees with $n$ leaves is the (interior of the) associahedron.

If $\tau \in \kappa_n$, let
\e
K_n^{\tau}\simeq [0, +\infty)^{\mathrm{IntEd}(\tau)} \subset K_n
\e 
correspond to metric trees with given type $\tau$. 
%The interiors 
%\e
%K_n^{\tau, \circ}\simeq (0, +\infty)^{\mathrm{IntEd}(\tau)} 
%\e
%form a partition of $K_n$.
These form a stratification of $K_n$, each strata $K_n^{\tau}$ has codimension given by 
\e
\mathrm{codim} \tau := \sum_{v\in \mathrm{Vert}(\tau)}{ val(v) - 3}.
\e
Define $\kappa^k_n\subset \kappa_n$ to consist in trees of codimension $k$. We will mostly be interested in the codimension zero and one strata:
\begin{itemize}
\item  $\mathrm{codim} \tau =0$ corresponds to  trivalent trees, then we will refer to $K_n^{\tau}$ as a chamber.
\item  $\mathrm{codim} \tau =1$ corresponds to exactly one 4-valent vertex, and trivalent remaining vertices, then we will refer to $K_n^{\tau}$ as a wall.
\end{itemize}
\end{defi}

\subsection{Multiplicative trees and the $A_\infty$-algebra structure on $CM_*(G)$}
\label{ssec:mult_trees_alg_str_CM_G}

Let $G$ be a compact Lie group, and fix a  Morse function $f \colon G\to \rr$.

% , and $X$ a closed smooth manifold on which $G$ acts smoothly. 
%\color{red}Denote respectively
%\ea
%m_G &\colon G\times G \to G, \text{ and} \\
%m_X &\colon G\times X \to X
%\ea
%the multiplication and action maps. \color{black} 
%\color{red} We will assume that both $G$ and $X$ are not discrete (the discrete case is easier and can be treated separately  REF DO IT QUICKLY IN A SEPARATE SECTION? STILL NEED TO EXCLUDE CASE?\color{black} ) F
%\e
%f \colon G\to \rr, \; \; \;  h \colon X\to \rr.
%\e

%\color{red} NOT NEEDED ANYMORE
%and assume, up to moving their critical points slightly, that 
%\ea
%m_G(\mathrm{Crit}f\times \mathrm{Crit}f)\cap \mathrm{Crit}f = \emptyset, \\
%m_X(\mathrm{Crit}f\times \mathrm{Crit}h)\cap \mathrm{Crit}h = \emptyset. 
%\ea

%\begin{lemma}Such Morse functions exist as long as $G$ is not discrete.
%\end{lemma}
%\begin{proof}
%Pick a Morse 
%\end{proof}

\color{black}
We will define the moduli spaces involved in the definition of the $A_\infty$-algebra structure on $CM_*(G,f)$.

%$A_\infty$-module structure on $CM_*(X,h)$ (and the $A_\infty$-algebra structure on $CM_*(G,f)$ by setting $(X,h) = (G,f)$).

Let $\mathfrak{X}(G)$ be the space of vector fields on $G$, and $\mathfrak{X}(G,f) \subset \mathfrak{X}(G)$
%\e
%\mathfrak{X}(G,f) \subset \mathfrak{X}(G), %\; \; \; \mathfrak{X}(X,h) \subset \mathfrak{X}(X)
%\e
the spaces of \emph{pseudo-gradients} for $f$, i.e. those $V\in \mathfrak{X}(G)$ such that $d f . V <0$ outside  $\mathrm{Crit}f$, and such that $V$ is a gradient of $f$ for some metric, in a neighborhood of critical points.% (and likewise for $(X,h)$).

We will consider moduli spaces of trees of gradient flow lines. In order for these to be transversally cut out, we will need to allow the vector fields not to be pseudo-gradients everywhere, as trees could have constant edges at critical points. We will perturb the vector fields near the vertices, as in Abouzaid \cite{abouzaid2011topological}. In order to perturb families consistently, the perturbations will be defined on the ``universal tree'', analogously to Seidel's construction for Fukaya categories \cite[Section~9]{seidel2008fukaya}.

\begin{figure}[!h]
    \centering
    \def\svgwidth{.5\textwidth}
   \includegraphics[scale=.15]{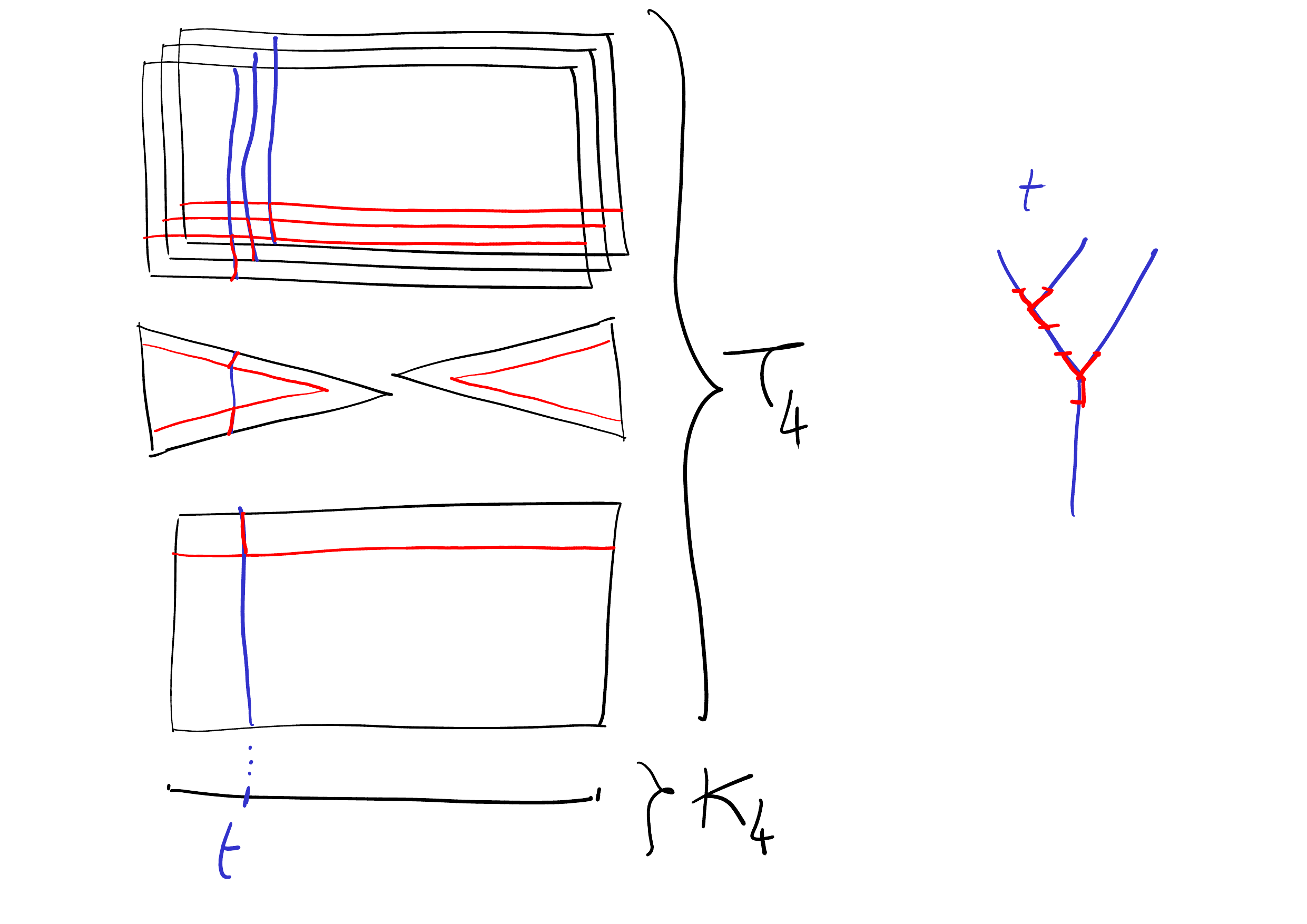}
      \caption{The universal tree $\Tcal_4$.}
      \label{fig:Tcal_4}
\end{figure}

\begin{defi}\label{def:univ_trees} Let $\tau \in \kappa_n$, and $e$ an internal edge. Define the fibered product
\e
\Tcal_e = K_n^\tau \times_{[0, +\infty)} T,
\e
where $K_n^\tau \to [0, +\infty)$ corresponds to the length of $e$, and 
\e
T = \left\lbrace (x,y) \ : \ 0\leq y\leq x \right\rbrace,
\e 
with $T \to [0, +\infty)$ the projection to the first coordinate. The space $\Tcal_e$ ``fibers'' over $K_n^\tau$, and the fiber over $(\tau, l)$ corresponds to the interval $[0, l(e)]$.

If now $e$ is a leaf of $\tau$, let 
\e
\Tcal_e = K_n^\tau \times {(-\infty, 0]} ,
\e
and likewise, if $e$ is a  root,  
\e
\Tcal_e = K_n^\tau \times {[0, +\infty)} .
\e

Define then the \emph{universal tree over $K_n^\tau$} to be
\e
\Tcal_n^{\tau} = \coprod_{e\in \mathrm{Edges}(\tau)}{\Tcal_e} , 
\e
and finally the \emph{universal tree over $K_n$}
\e
\Tcal_n = \left(\coprod_{\tau\in \kappa_n}{\Tcal_n^{\tau}} \right)/\sim , 
\e
where if a metric tree $(\tau,l)$ has an edge $e$ with $l(e)=0$, and $(\tau'=\tau(e), l')$ is its contraction, then we glue the fibre $ \left( \Tcal_n^{\tau '} \right)_{(\tau',l')}$ to $ \left( \Tcal_n^{\tau} \right)_{(\tau,l)}$.

Let also $\Tcal_{n, \leq 1}, \Tcal_{n, \geq 1} \subset \Tcal_n$ stand respectively for the points at distance smaller than $1$ (resp. greater than $1$) from the vertices. 

Let $\Tcal = \coprod_{n\geq 2}{ \Tcal_n}$, with subsets $\Tcal_{ \geq 1}$ and $\Tcal_{ \leq 1}$.  

\end{defi}

\begin{defi}\label{def:univ_pert_pseudograd}

Let the space of \emph{universally perturbed treed pseudogradients} for $(G,f)$

\e
\mathfrak{X}^{\Tcal}(G,f) \subset \mathrm{Maps} \left( \Tcal ,  \mathfrak{X}(G)  \right)  
\e 
consist in smooth maps $V$ that are:
\begin{enumerate}
\item constant on $ \Tcal_{ \geq 1}$ and equal to a pseudo-gradient  $\widehat{V} \in \mathfrak{X}(G,f)$
\item coherent with respect to edge breaking, in the following sense. If $\tau\in \kappa_n$ and $\Ecal \subset \mathrm{IntEd}(\tau)$ is a subset of internal edges splitting $\tau$ into subtrees 
\e
\tau_1\in \kappa_{n_1}, \ldots , \tau_k \in \kappa_{n_k}, 
\e
\end{enumerate}
then gluing metric trees at edges of $\Ecal$ gives gluing maps, for $L\geq 2$:
\e
K_{n_1} \times \cdots \times K_{n_k} \times [L, +\infty)^{\abs{\Ecal}} \to K_n 
\e
Let $\nu_L$ stand for the image of this map, then restricted to this image, there is a map
\e
{\Tcal_{n, \leq 1}}_{|\nu_L} \to \Tcal_{{n_1}, \leq 1} \sqcup \cdots  \sqcup \Tcal_{{n_k}, \leq 1}
\e 
and we want that $V$ factors through this map for $L$ large enough.

$\mathfrak{X}^{\Tcal}(G,f)$ is nonempty (it contains $\mathfrak{X}(G,f)$) and convex, and is equipped with a projection to the constant part:
\ea
\mathfrak{X}^{\Tcal}(G,f) &\to \mathfrak{X}(G,f), \\
 V &\mapsto \widehat{V}.
\ea

\end{defi}

\begin{defi}\label{def:treed_pseudograd_mult_tree} 

Pick $V\in \mathfrak{X}^{\Tcal}(G,f)$, and let $T = (\tau, l)$ be a metric tree. With a slight abuse of notations we will denote the fibre of $\Tcal$ over $T$  by $T$ as well, and we will write it as a union of intervals 
\e
T = \coprod_{e\in \mathrm{Edges}(T)} I_e,
\e 
with
\e 
I_e =  \begin{cases} (-\infty, 0] &\text{ if } e\text{ is a leaf,} \\ [ 0, l(e)] &\text{ if } e\text{ is an internal edge,} \\ [0, +\infty) &\text{ if } e\text{ is the root.} 
\end{cases}
\e
Restricting $V$ to $T$ then gives a map $V_T \colon T \to \mathfrak{X}(G)$.

A \emph{multiplicative tree} in $G$ is a map $\gamma \colon T \to G$ such that:
\begin{itemize}
\item on each edge $I_e$, the restriction $\gamma_e$ of $\gamma$ is a flowline for $V_T$.
\item at every vertex $v$, the multiplicative condition holds.  Assume that the incoming edges at $v$ are given in the following order:
\e
 \mathrm{In}(v) = (e_1, \ldots, e_k) ,
\e
and let $f$ be the outgoing edge. The condition is that 
\e\label{eq:mult_cond_vert_G}
\gamma_{e_1}(v) \times \cdots \times \gamma_{e_k}(v) = \gamma_{f}(v),
\e
where $\gamma_{e}(v)$ stands for $\gamma(t)$, with $t$ the endpoint of $I_e$ corresponding to $v$.

\end{itemize}

%
%
%, define 
%\e
%\mathfrak{X}^{\tau}(G,f;X,h) = \mathfrak{X}(G,f)^{\mathrm{LeftEd}(\tau)}  \times  \mathfrak{X}(X,h)^{\mathrm{TopRightEd}(\tau)}   ,
%\e 
%and pick $V = \lbrace V_e \rbrace_{e\in\mathrm{Edges}(\tau)}$. We will refer to $V$ as a \emph{treed pseudogradient}, or a \emph{$\tau$-pseudogradient}.
%
%A \emph{multiplicative tree} for $V$, denoted $\underline{\gamma}\colon T\to (G,X)$, is a collection of maps
%\ea
%\gamma_e\colon & I_e \to G,\ e\in\mathrm{LeftEd}(\tau), \\
%\gamma_e\colon & I_e \to X,\ e\in\mathrm{TopRightEd}(\tau), 
%\ea
%where $I_e$ is the interval
%\e 
%I_e = [\alpha(e), \beta(e)]= \begin{cases} [-\infty, 0] &\text{ if } e\text{ is a leaf,} \\ [ 0, l(e)] &\text{ if } e\text{ is an internal edge,} \\ [0, +\infty] &\text{ if } e\text{ is the root,} 
%\end{cases}
%\e
%and such that:
%\begin{itemize}
%\item $\gamma_e$ is a flowline for $V_e$: $\gamma_e'(t) = V_e(\gamma_e(t))$.
%\item The following multiplicative condition holds at every vertex $v$: assume that the incoming edges at $v$ are given in the following order:
%\e
% \mathrm{In}(v) = (e_1, \ldots, e_k) ,
%\e
%and let $f$ be the outgoing edge. The condition is that 
%\e
%\gamma_{e_1}(\beta(e_1)) \times \cdots \times \gamma_{e_k}(\beta(e_k)) = \gamma_{f}(\alpha(f)),
%\e
%where $\times$ either stands for $m_G$ or $m_X$.
%\end{itemize}
Since the $\gamma_e$ are flowlines, limits at external ends exist and are critical points. If, as an ordered set, $\mathrm{Leaves}(\tau)= (l_1, \ldots, l_n)$, let
\e
\lim_{-\infty} {\gamma} = (\lim_{-\infty} {\gamma}_{l_1}, \ldots , \lim_{-\infty} {\gamma}_{l_n}) \in (\mathrm{Crit} f)^{n}, % \times \mathrm{Crit} h ,
\e
and let
\e
\lim_{+\infty} {\gamma} = \lim_{+\infty} {\gamma}_{r_\tau} \in \mathrm{Crit} f ,
\e
with $r_\tau$ the root.
\end{defi}

We want to form moduli spaces of trees parametrized by $K_n$: we first define moduli spaces over the chambers $K_n^\tau;\ \tau \in \kappa_n^0$, and glue these along the walls $K_n^\tau;\ \tau \in \kappa_n^1$.

\begin{defi}\label{def:mod_spaces_tau} Let $\underline{x}\in (\mathrm{Crit} f)^{n}$, $y\in \mathrm{Crit} f$, $\tau \in \kappa_n$ and $V \in \mathfrak{X}^{\Tcal}(G,f)$,  define
\e
\Mcal^{\tau}(\underline{x}, y ; V) = \left\lbrace (T,{\gamma})   \ :\ T\in K_n^{\tau}, \ {\gamma}\colon T\to G ,\ \underline{x} = \lim_{-\infty} {\gamma},\ y     = \lim_{+\infty} {\gamma} \right\rbrace,
\e
where ${\gamma}$ is a multiplicative tree for $V$ as above. Let its virtual dimension be defined as:
\e
\mathrm{vdim} ~\Mcal^{\tau}(\underline{x}, y ; {V}) :=  \abs{\underline{x}} - \abs{y} + \mathrm{dim} ~\tau,
\e
where $\abs{y}$ and $\abs{\underline{x}}$ stand for the (sum of) Morse indices.
\end{defi}

\begin{prop}\label{prop:mod_spaces_tau} Given $\tau \in \kappa_n$, there exists a comeagre subset 
\e
\mathfrak{X}^{\Tcal}_{\mathrm{reg}, \tau}(G,f) \subset \mathfrak{X}^{\Tcal}(G,f)
\e 
such that, for $V\in \mathfrak{X}^{\Tcal}_{\mathrm{reg}, \tau}(G,f) $:
\begin{itemize}
\item if $\mathrm{vdim}~ \Mcal^{\tau}(\underline{x}, y ; V) <0$, then $\Mcal^{\tau}(\underline{x}, y ; V)$ is empty,
\item if $\mathrm{vdim}~ \Mcal^{\tau}(\underline{x}, y ; V) =0$, then $\Mcal^{\tau}(\underline{x}, y ; V)$ is a discrete finite set,
\item if $\mathrm{vdim}~ \Mcal^{\tau}(\underline{x}, y ; V) = 1$, then $\Mcal^{\tau}(\underline{x}, y ; V)$ is a smooth 1-manifold with boundary identified to
\e\label{eq:boundary_chamber}
\partial \Mcal^{\tau}(\underline{x}, y ; V) \simeq \bigcup_{e\in \mathrm{IntEd}(\tau)}{\Mcal^{\tau(e)}(\underline{x}, y ; V^{\tau(e)})},
\e
with $V^{\tau(e)}$ obtained from $V$ by removing $V_e$ (recall that $\tau(e)$ consists in the tree obtained from $\tau$ by collapsing $e$).

\end{itemize}

\end{prop}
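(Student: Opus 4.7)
The proof will follow the standard Fredholm / Sard-Smale paradigm for moduli of Morse flow trees, adapted to the multiplicative vertex condition. For fixed $\tau \in \kappa_n$, I will set up a Banach manifold $\Bcal^\tau(\underline{x}, y)$ of pairs $(T, \gamma)$ with $T \in K_n^\tau$ and $\gamma \colon T \to G$ of class $W^{1,p}$ on each edge (for some $p > 2$), with exponential decay at the semi-infinite ends toward the prescribed critical points, and satisfying (\ref{eq:mult_cond_vert_G}) at each vertex. The vertex condition defines a smooth Banach submanifold because the $k$-fold multiplication $m_k \colon G^k \to G$ is a submersion: its differential at $(g_1, \dots, g_k)$ is $\sum_{i=1}^k dL_{g_1 \cdots g_{i-1}} \circ dR_{g_{i+1} \cdots g_k}$, a sum of isomorphisms. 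Over $\Bcal^\tau$ I form the Banach bundle $\Ecal^\tau$ with fibre $\bigoplus_{e} L^p(I_e, \gamma_e^* TG)$ and the section $\mathcal{F}(T, \gamma)_e = \dot\gamma_e - V_T \circ \gamma_e$, whose zeros are $\Mcal^\tau(\underline{x}, y; V)$. A standard linearization computation --- Morse index--coindex difference per edge, $(-\dim G)$ per vertex constraint (made transverse by the submersivity of $m_k$), plus $\dim K_n^\tau$ for varying internal edge lengths --- gives $\mathrm{ind}(D\mathcal{F}) = \abs{\underline{x}} - \abs{y} + \dim K_n^\tau = \mathrm{vdim}\,\Mcal^\tau$.

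To obtain a regular $V$, I form the universal moduli space $\Mcal^\tau_{\mathrm{univ}} = \{(T, \gamma, V) : V \in \mathfrak{X}^\Tcal(G, f),\ (T, \gamma) \in \Mcal^\tau(\underline{x}, y; V)\}$ (taking a suitable Banach completion of the parameter space) and check that its parametric section is transverse to zero. A nonzero cokernel element $\eta$ would have to be $L^p$-orthogonal to every admissible perturbation of $V$; but Definition \ref{def:univ_pert_pseudograd} only imposes the coherence condition asymptotically (for $L \geq 2$), leaving $V|_{\Tcal_{n,\leq 1}}$ freely perturbable in small neighborhoods of every vertex over any compact subset of $K_n^\tau$. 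A unique-continuation argument for the adjoint equation on each edge, together with the fact --- guaranteed by allowing $V$ to deviate from pseudo-gradient behavior in vertex neighborhoods, as in Abouzaid \cite{abouzaid2011topological} --- that each $\gamma_e$ is non-constant on a dense subset of its interior, then forces $\eta \equiv 0$. Sard--Smale produces the desired comeagre $\mathfrak{X}^\Tcal_{\mathrm{reg}, \tau}(G, f)$, and the implicit function theorem yields the smooth manifold structure: empty when $\mathrm{vdim} < 0$ and a finite discrete set when $\mathrm{vdim} = 0$ (since any sequential degeneration in codimension one produces a moduli space of negative virtual dimension, which is empty by the first bullet).

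For the 1-dimensional case, $\Mcal^\tau(\underline{x}, y; V)$ is a smooth 1-manifold, in general non-compact. Its boundary arises exactly from internal edges collapsing: as $l(e) \to 0$ along a family, the limit has underlying tree type $\tau(e)$, the perturbation restricts by construction to $V^{\tau(e)}$, and the family converges to an element of $\Mcal^{\tau(e)}(\underline{x}, y; V^{\tau(e)})$. A gluing argument --- using once more the submersivity of $m_k$ at the new higher-valent vertex created by collapsing $e$ --- shows conversely that every element of $\Mcal^{\tau(e)}(\underline{x}, y; V^{\tau(e)})$ lies on the boundary of a unique germ of a 1-parameter family in $\Mcal^\tau$, establishing the identification (\ref{eq:boundary_chamber}). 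The remaining asymptotic degenerations --- internal edge lengths going to infinity, or trajectory breaking along a leaf or root --- correspond to open ends of the 1-manifold rather than boundary points, since the limiting broken configurations do not themselves belong to $\Mcal^\tau$.

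The principal technical obstacle is the universal transversality argument in the restricted parameter space $\mathfrak{X}^\Tcal(G, f) \subset \mathrm{Maps}(\Tcal, \mathfrak{X}(G))$, whose elements must simultaneously be pseudo-gradients outside small vertex neighborhoods and coherent under edge breaking. The key resolution is that coherence is only imposed for $L \geq 2$: over any compact region of $K_n^\tau$ the restriction of $V$ to $\Tcal_{n, \leq 1}$ admits arbitrary variations in a neighborhood of each vertex, and these unconstrained directions in parameter space suffice to span the cokernel of the fixed-$V$ linearization.
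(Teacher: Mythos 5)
Your overall strategy — universal moduli space over a Banach completion of the perturbation space, cokernel/unique-continuation argument, Sard--Smale, then boundary analysis — is the same as the paper's. However, there is one genuine logical problem and one significant difference in route.

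The problem lies in the justification for why the cokernel element $\eta$ vanishes. You claim that allowing $V$ to deviate from pseudo-gradient behavior near vertices \emph{guarantees} that each $\gamma_e$ is non-constant (on a dense subset of its interior), and you use this non-constancy as an input. This inverts the actual logic. Allowing non-pseudo-gradient perturbations does not guarantee non-constancy: for a given $V\in\mathfrak{X}^{\Tcal}(G,f)$ a trajectory sitting at a critical point of $f$ can perfectly well be constant, and the statement you are trying to prove is precisely that a comeagre set of $V$ is regular, so you cannot assume genericity of $V$ within the argument. The correct point, which the paper makes, is that constant components are \emph{not an obstruction} to the cokernel argument, because the perturbations $Y\in T_V\mathfrak{X}^{\Tcal}(G,f)$ depend on the domain parameter in $\Tcal_{\leq 1}$ and not only on the point of $G$; one can therefore use bump perturbations localised near a point of $\Tcal_{\leq 1}$ to kill $\eta$ there, regardless of whether $\gamma_e$ is locally injective or constant. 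Were $V$ forced to be a pseudo-gradient on $\Tcal_{\leq 1}$ (i.e.\ to vanish at critical points) this would fail exactly when $\gamma_e$ is constant at a critical point — which is why the non-pseudo-gradient freedom is introduced. You should replace the non-constancy claim by the observation that $\Tcal_{\leq 1}$ meets every connected component of the domain and that perturbations there are unconstrained in the tree-parameter direction.

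On the boundary identification you take a genuinely different and heavier route than the paper. You treat the boundary as a limit $l(e)\to 0$ of a one-parameter family and then invoke a gluing argument to show every element of $\Mcal^{\tau(e)}$ arises. But the moduli space $\Mcal^\tau$ is already defined over all of $K_n^\tau\simeq [0,+\infty)^{\mathrm{IntEd}(\tau)}$, including its boundary faces where $l(e)=0$; at such a face the interval $I_e$ degenerates to a single point, and the map which forgets the value $\gamma_e$ at that point is a tautological bijection onto $\Mcal^{\tau(e)}(\underline{x},y;V^{\tau(e)})$, with inverse given by the multiplicative condition at the source vertex of $e$, which reconstructs the forgotten value. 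No gluing or compactness analysis is required for this identification. Your gluing approach is not incorrect in principle, but it is unnecessary and obscures the key fact (associativity of the group multiplication) that makes the identification, and hence the later chamber-gluing, work.

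Finally, your index bookkeeping (``Morse index--coindex difference per edge, $-\dim G$ per vertex, plus $\dim K_n^\tau$'') is only gestured at; the paper instead computes the dimension of the fibre over the centre of $K_n^\tau$ directly as $\dim\bigl(p(\Ucal_{x_1}\times\cdots\times\Ucal_{x_n})\cap\Scal_y\bigr) = |\underline{x}|-|y|$, which is more concrete and makes visible that the multiplicative vertex condition costs one copy of $\dim G$ rather than two (as Fukaya's diagonal condition would). You should verify your abstract Fredholm count agrees with this.
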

%MODIF: introduce notation $\gamma_e(v)$ for $\gamma(\alpha(e))$ or $\gamma(\beta(e))$

\begin{proof}
The proof involves a standard transversality argument that we sketch below for the reader's convenience, and refer for example to \cite{audin2014morse} and \cite[Sec.~7]{abouzaid2011topological} for more details.

The universal moduli space $\bigcup_{V}{\Mcal^{\tau}(\underline{x}, y ; V)}$ corresponds to the zero set of a section $s$ of  a Banach bundle 
\e
\Ecal \to \Bcal \times \mathfrak{X}^{\Tcal}(G,f),
\e
where $\Bcal$ is a space of multiplicative trees not subjects to the flowline equation, and $s(\gamma, V) = \dot{\gamma} - V(\gamma)$ is the flowline equation. 

To prove this universal moduli space is smooth, one takes $\eta$
in the cokernel of $D_{(\gamma, V)}s$: for all $(\xi, Y) \in T_{\gamma}\Bcal \times T_{V}\mathfrak{X}^{\Tcal}(G,f)$ one has
\ea
\left\langle \frac{\partial s}{\partial \gamma}. \xi , \eta \right\rangle &= 0 \label{eq:cok1}\\
\left\langle \frac{\partial s}{\partial V}. Y , \eta \right\rangle &= 0 \label{eq:cok2}
\ea
Equation~(\ref{eq:cok1}) implies that $\eta$ satisfies a unique continuation principle, and Equation~(\ref{eq:cok2}) with $Y$ ``bump vector fields'' concentrated at points of $\Tcal_{\leq 1}$ permits to show that vanishes on $\Tcal_{\leq 1}$, which intersect every connected components of the domain of $\eta$, implying $\eta = 0$. Notice that by definition of $\mathfrak{X}^{\Tcal}(G,f)$, $V$ need not be a pseudogradient on $\Tcal_{\leq 1}$. This is what enables one to take such ``bump vector fields'' $Y$, this would not have been possible otherwise, in the case where $\gamma$ has a constant component at a critical point of $f$. From the smoothness of this universal moduli space, one gets a comeagre subset of regular $V$ for which $\Mcal^{\tau}(\underline{x}, y ; V)$ is transversely cut out, by Sard-Smale's theorem.

What differs from Fukaya's Morse flow trees though is the dimension formula, the difference  comes from the fact that we use different vertex conditions: for trivalent vertices our condition involves only one equation ($\gamma_1 (v) \gamma_2 (v) = \gamma_3 (v) $ ) as opposed to two ($\gamma_1 = \gamma_2 = \gamma_3$). Therefore we explain how to obtain it. 

The moduli spaces project to $K_n^\tau$, therefore the virtual dimension is the sum of  $\dim K_n^\tau$ and the dimension of the fibre. Take the fiber over the center of $K_n$, i.e. $(\tau, l)$ with $l\equiv 0$. If $V$ is a constant pseudogradient, the moduli space is identified with the intersection
\e
p( \Ucal_{x_1} \times \cdots \times \Ucal_{x_n}) \cap \Scal_y \subset G,
\e
where $p\colon G^n \to G$ is the product $p(g_1, \ldots, g_n) = g_1 \cdots g_n$ and $\Ucal_{x}, \Scal_{x}$ stand respectively for the unstable and stable manifolds of a critical point $x$. Now generically $p( \Ucal_{x_1} \times \cdots \times \Ucal_{x_n})$ will be of dimension $\abs{\underline{x}}$ and $\Scal_y$ of codimension $\abs{y}$, therefore the expected dimension of the fiber will be $\abs{\underline{x}}-\abs{y}$.

Finally, we discuss the boundary formula (\ref{eq:boundary_chamber}), namely: 
\e\nonumber %\label{eq:boundary_chamber}
\partial \Mcal^{\tau}(\underline{x}, y ; V) \simeq \bigcup_{e\in \mathrm{IntEd}(\tau)}{\Mcal^{\tau(e)}(\underline{x}, y ; V^{\tau(e)})},
\e

Observe first that boundary elements of $\partial \Mcal^{\tau}(\underline{x}, y ; V)$ project to boundary faces of $K_n^\tau$, i.e. with an edge of length $l(e)=0$. The corresponding interval $I_e$ is then reduced to a point, and forgetting the value of $\gamma$ at that point gives the corresponding element of  $\Mcal^{\tau(e)}(\underline{x}, y ; V^{\tau(e)})$. Conversely, given an element of $\Mcal^{\tau(e)}(\underline{x}, y ; V^{\tau(e)})$, the multiplicative condition at vertices (\ref{eq:mult_cond_vert_G}) uniquely determines the value of $\gamma$ at $I_e$.
%MODIF: (somewhere) 
\end{proof}

The boundary formula (\ref{eq:boundary_chamber}) reads as the boundary of a moduli space defined over a chamber of $K_n$ corresponds to the union of the moduli spaces over the boundary walls of that given chamber. Therefore, adjascent chambers can be glued along their common wall\footnote{The key property that allows this is associativity of the multiplication, which we  implicitly use when writing (\ref{eq:mult_cond_vert_G}).}, and when glueing all the chambers together, one obtains a manifold without boundary.

\begin{defi}\label{def:mod_space_over_K_n}

Let

\e
\mathfrak{X}^{\Tcal}_{\mathrm{reg}}(G,f) := \bigcap_{n, \tau\in \kappa_n}{\mathfrak{X}^{\Tcal}_{\mathrm{reg}, \tau}(G,f)} \subset \mathfrak{X}^{\Tcal}(G,f).
\e 
Define for $V \in \mathfrak{X}^{\Tcal}_{\mathrm{reg}}(G,f)$ and $\underline{x} = (x_1, \ldots , x_n),y$ critical points such that $\mathrm{vdim}(\underline{x},y) = \abs{\underline{x}} - \abs{y} + n-2$ either equals zero or one,

\e
\Mcal(\underline{x}, y ; V) = \bigcup_{\tau\in \kappa^0_n}{\Mcal^{\tau}(\underline{x}, y ; V)},
\e
where for $\mathrm{vdim}(\underline{x},y)=0$ this is a disjoint union of finite sets, and for $\mathrm{vdim}(\underline{x},y)=1$ the union is understood as a glueing along boundaries: if $\sigma\in \kappa^1_n$, there is exactly two $\tau, \tau'\in \kappa^0_n$ such that $\sigma = \tau(e) = \tau'(e')$, we then glue  $\Mcal^{\tau}(\underline{x}, y ; V)$ and $\Mcal^{\tau'}(\underline{x}, y ; V)$ along the common part of their boundary $\Mcal^{\sigma}(\underline{x}, y ; V)$.

If $n=1$, let $\Mcal(\underline{x}, y ; V)$ be the space of Morse trajectories for $\widehat{V}$ involved in the Morse differential (i.e. the quotient by $\rr$ of the space of flowlines).

\end{defi}

\begin{prop}\label{prop:compactif_1dim_mod_space} 
If $\mathrm{vdim}(\underline{x},y)=1$, then $\Mcal(\underline{x}, y ; V)$ can be compactified to a compact 1-manifold with boundary $\overline{\Mcal}(\underline{x}, y ; V)$, with boundary given by:
\e 
\partial \overline{\Mcal}(\underline{x}, y ; V)= \bigcup_{z; \ 1\leq i\leq j\leq n} \Mcal(\underline{x}_1, z ; V) \times \Mcal(\underline{x}_2, y ; V),
\e
where (see Figure~\ref{fig:tree_breaking}): $\underline{x}_1 = (x_i, x_{i+1}, \ldots, x_j)$, $z\in\mathrm{Crit}f$ is such that  $\mathrm{vdim}(\underline{x}_1,z)=0$, and $\underline{x}_2 = (x_1, \ldots, x_{i-1}, z,  x_{j+1} \ldots, x_n)$ (and therefore $\mathrm{vdim}(\underline{x}_2,y)=0$).

\end{prop}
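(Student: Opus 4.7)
The plan is to follow the standard Gromov-type compactness plus gluing strategy familiar from Morse tree theory (as in Fukaya \cite{fukaya1993morse}, Abouzaid \cite{abouzaid2011topological} or Audin--Damian \cite{audin2014morse}), adapted to our multiplicative vertex condition. By Proposition~\ref{prop:mod_spaces_tau} and Definition~\ref{def:mod_space_over_K_n}, $\Mcal(\underline{x},y;V)$ is already a smooth 1-manifold without boundary in the interior: within each chamber $K_n^\tau$ (for $\tau$ trivalent) it is a 1-manifold by transversality, and the codimension-one walls are glued by matching the identifications (\ref{eq:boundary_chamber}) of adjacent chambers, which is consistent thanks to associativity of the multiplication. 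So the task reduces to analyzing and identifying the ``ends'' of this 1-manifold.

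To classify the ends, I would take a sequence $(T_n, \gamma_n) \in \Mcal(\underline{x},y;V)$ with no convergent subsequence inside that space. The possible degenerations are: (i) the length $l(e)$ of some internal edge of $T_n$ tends to $+\infty$; (ii) on some external edge (a leaf or the root), the flowline $\gamma_n$ breaks at an intermediate critical point $z$ of $f$. Both phenomena are in fact instances of the same thing: cutting the tree at the degenerating edge (respectively, inserting a new internal edge at the breakpoint and letting its length go to $+\infty$) splits $T_n$ into two subtrees $\tau_1, \tau_2$, whose leaves are respectively a consecutive block $\underline{x}_1 = (x_i,\ldots,x_j)$ and its complement $\underline{x}_2 = (x_1, \ldots, x_{i-1}, z, x_{j+1}, \ldots, x_n)$. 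The rescaled halves of $\gamma_n$ converge on compact sets to a pair of multiplicative trees $(\gamma_1, \gamma_2) \in \Mcal(\underline{x}_1, z ; V) \times \Mcal(\underline{x}_2, y ; V)$. The additivity of virtual dimension together with $\mathrm{vdim}(\underline{x},y) = 1$ forces $\mathrm{vdim}(\underline{x}_1,z) = \mathrm{vdim}(\underline{x}_2,y) = 0$, which yields exactly the configurations listed in the statement.

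For the converse, given such a broken pair $(\gamma_1,\gamma_2)$ I would construct a gluing map sending a parameter $L \gg 0$ to a preglued multiplicative tree $\gamma_L$, obtained by concatenating $\gamma_1$ and $\gamma_2$ via an internal edge of length $L$ (using a cutoff to interpolate near $z$), and then correct $\gamma_L$ into an honest solution via a Newton/implicit-function argument. Transversality from Proposition~\ref{prop:mod_spaces_tau} provides surjectivity of the linearized flowline operator, and the multiplicative vertex condition poses no additional difficulty because the multiplication $m \colon G \times G \to G$ is a submersion, so the linearized vertex constraints remain surjective. This produces half-open neighborhoods of each broken configuration in $\overline{\Mcal}(\underline{x},y;V)$ and gives the manifold-with-boundary charts near the ends.

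The main obstacle, and the only step where care is needed beyond the classical Morse tree situation, is checking that this compactness/gluing is compatible with the coherent perturbation scheme $V \in \mathfrak{X}^{\Tcal}(G,f)$ of Definition~\ref{def:univ_pert_pseudograd}. Precisely, the coherence condition at long necks, which requires $V$ to factor through the splitting map $\Tcal_{n,\leq 1}|_{\nu_L} \to \Tcal_{n_1,\leq 1} \sqcup \Tcal_{n_2,\leq 1}$ for $L$ large, is exactly what ensures that the preglued perturbation on $\gamma_L$ agrees with the perturbations used for $\gamma_1$ and $\gamma_2$ near their vertices, so the implicit function argument applies uniformly. Once this is in place, the standard assembly of local charts produces the desired compact 1-manifold with boundary $\overline{\Mcal}(\underline{x},y;V)$ with the prescribed boundary decomposition.
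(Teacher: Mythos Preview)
Your proposal is correct and follows the same approach as the paper, which simply states in one line that the boundary points ``correspond to trees breaking either at internal or external edges.'' Your version is in fact a careful expansion of that sentence: you spell out the compactness analysis (edge length $\to +\infty$ or breaking on an external edge), the dimension count forcing both pieces to be index zero, the gluing via a Newton/implicit-function argument, and the role of the coherence condition on $V$ in making the gluing compatible with the perturbation scheme. None of this is made explicit in the paper's proof, so your write-up is strictly more informative while remaining faithful to the intended argument.
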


\begin{figure}[!h]
    \centering
    \def\svgwidth{\textwidth}
   \includegraphics[scale=.15]{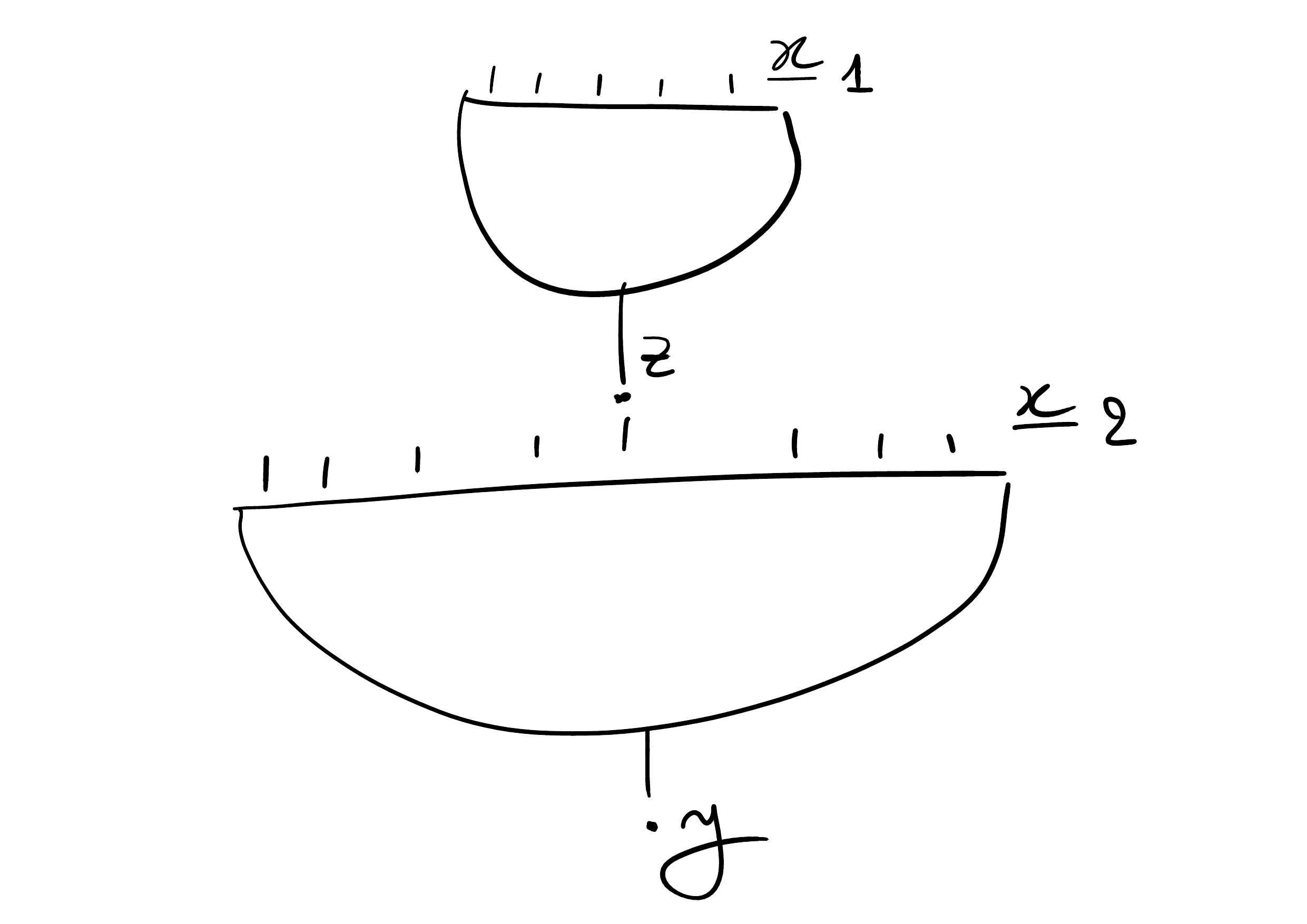}
      \caption{Tree breaking.}
      \label{fig:tree_breaking}
\end{figure}

\begin{proof}These correspond to trees breaking either at internal or external edges.
\end{proof}

We are now ready to define the $A_\infty$-algebra structure on the Morse complex of $G$.

\begin{defi}\label{def:Ainf_str_CM_G}
 Let $A = CM_*(G,f)$, and  $V \in \mathfrak{X}^{\Tcal}_{\mathrm{reg}}(G,f)$. Define for all $n\geq 1$, an operation $\mu_{A}^{n} \colon A^{\otimes n} \to A$ by:
%\e
%\mu_{A}^{n} \colon A^{\otimes n} \to A , 
%\e
%by:
\e
\mu_{A}^{n} (\underline{x}) = \sum_{y;\ \mathrm{vdim}(\underline{x},y)=0} \# \Mcal(\underline{x}, y ; V)\cdot  y
\e
It follows from Proposition~\ref{prop:compactif_1dim_mod_space}  that the $\mu_{A}^{n}$ satisfy the $A_\infty$-relations, therefore $(A, \left\lbrace\mu_{A}^{n} \right\rbrace_n)$ is an $A_\infty$-algebra. 

\end{defi}

\begin{remark}Even though we omit it in the notations, the maps $\mu_{A}^{n}$ depend on the perturbation $V$. One can show, following \cite{mazuir2022higher2}, that different choices of perturbations and Morse functions will yield homotopy equivalent $A_\infty$-algebras.
\end{remark}

\subsection{The $A_\infty$-module structure on $CM_*(X)$}
\label{ssec:Ainf_mod_str_CM_X}

Let now $G$ act on a closed smooth manifold $X$, and $h\colon X \to \rr$ a Morse function. We now want to endow $CM(X,h)$ with an $A_\infty$-module structure over the previously constructed  $A_\infty$-algebra $CM(G,f)$. The construction will be analogous, except that we will be counting multiplicative trees with edges both in $G$ and $X$. We now do the few adjustments in order to do so.

\begin{defi}[Left and top-right parts of a tree, see Figure~\ref{fig:left_topright_tree}]\label{def:LeftEd_TopRightEd}For a tree $\tau\in \kappa_n$, let 
\e
\mathrm{TopRightEd}(\tau)\subset \mathrm{Edges}(\tau)
\e
stand for the maximal leaf (i.e. the maximal element of the ordered set $\mathrm{Leaves}(\tau)$) and all its descendents, all the way down to the root. Let also
\e
\mathrm{LeftEd}(\tau) = \mathrm{Edges}(\tau) \setminus \mathrm{TopRightEd}(\tau).
\e

Let also $\mathrm{LeftEd}(\Tcal)$ and $\mathrm{TopRightEd}( \Tcal)$ stand for the corresponding subsets of the universal tree $\Tcal$.
\end{defi}

\begin{figure}[!h]
    \centering
    \def\svgwidth{\textwidth}
   \includegraphics[scale=.15]{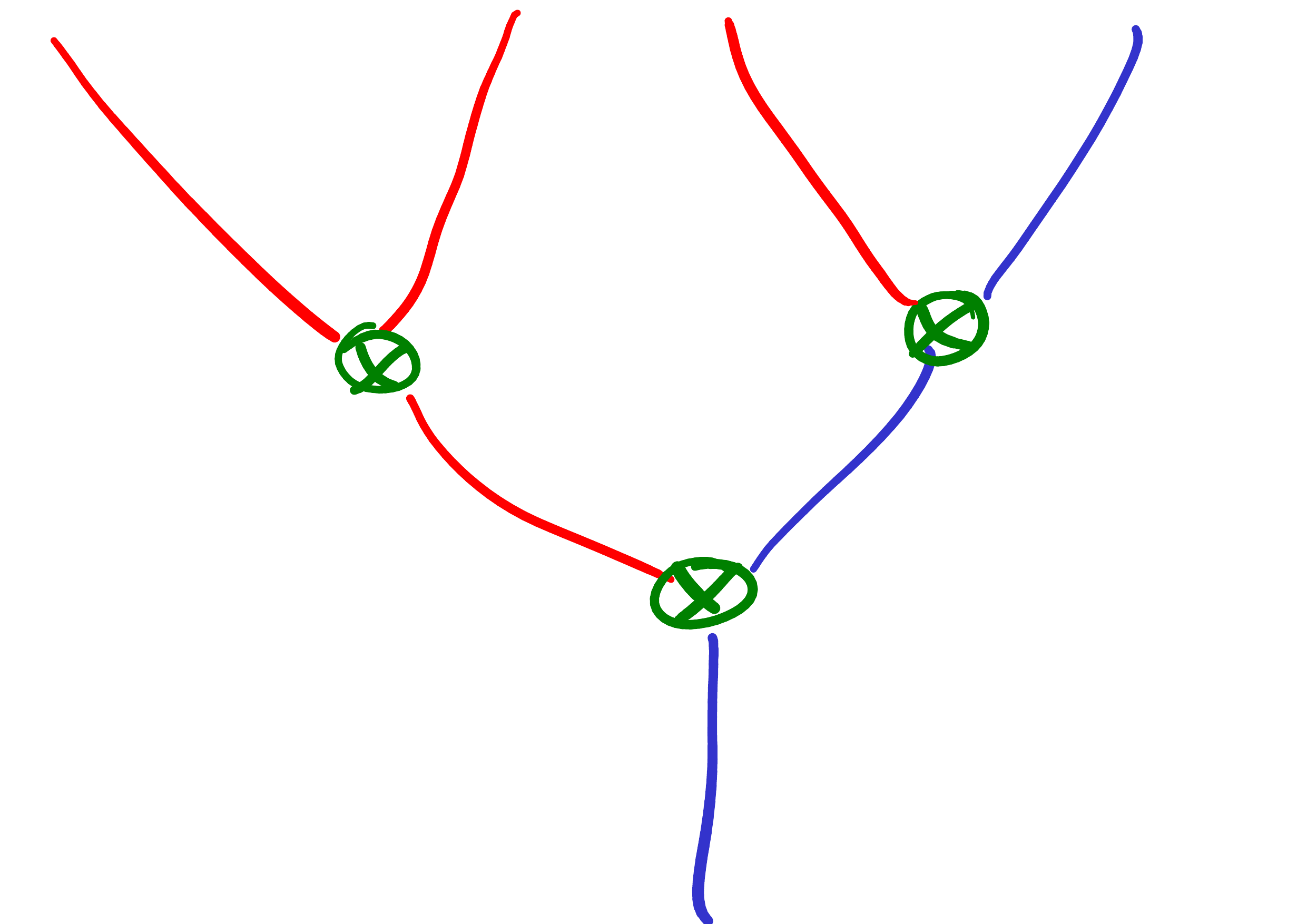}
      \caption{We have drawn a tree $\tau$ with $\mathrm{LeftEd}(\tau)$ in red, and $\mathrm{TopRightEd}(\tau)$ in blue.}
      \label{fig:left_topright_tree}
\end{figure}

The sides  $\mathrm{LeftEd}(\Tcal)$ and $\mathrm{TopRightEd}( \Tcal)$ will be mapped respectively to $G$ and $X$. In order to define the moduli spaces of these corresponding multiplicative trees, let us introduce the relevant space of perturbations, analogous to Definition~\ref{def:univ_pert_pseudograd}.

\begin{defi}\label{def:univ_pert_pseudograd_X}

Fix a $V\in \mathfrak{X}_{\mathrm{reg}}^{\Tcal}(G,f)$, and let
\e
\mathfrak{X}^{\Tcal}(G,f,V;X,h) \subset \mathrm{Maps} \left( \Tcal ,  \mathfrak{X}(G)  \sqcup \mathfrak{X}(X)  \right)  
\e 
consist in smooth maps $W$ that:
\begin{enumerate}
\item map $\mathrm{LeftEd}(\Tcal)$ to $\mathfrak{X}(G)$ and $\mathrm{TopRightEd}( \Tcal)$ to $ \mathfrak{X}(X)$,

\item are locally constant on $ \Tcal_{ \geq 1}$ and equal to either $\widehat{V} \in \mathfrak{X}(G,f)$ or some pseudo-gradient $\widehat{W} \in \mathfrak{X}(X,h)$
\item are coherent with respect to edge breaking, in the following sense. If $\tau\in \kappa_n$ and $\Ecal \subset \mathrm{IntEd}(\tau)$ is a subset of internal edges splitting $\tau$ into subtrees 
\e
\tau_1\in \kappa_{n_1}, \ldots , \tau_k \in \kappa_{n_k}, 
\e
\end{enumerate}
then gluing metric trees at edges of $\Ecal$ gives gluing maps, for $L\geq 2$:
\e
K_{n_1} \times \cdots \times K_{n_k} \times [L, +\infty)^{\abs{\Ecal}} \to K_n 
\e
Let $\nu_L$ stand for the image of this map, then restricted to this image, there is a map
\e
{\Tcal_{n, \leq 1}}_{|\nu_L} \to \Tcal_{{n_1}, \leq 1} \sqcup \cdots  \sqcup \Tcal_{{n_k}, \leq 1}
\e 
and we want that $W$ factors through this map for $L$ large enough. Here, we map $\Tcal_{{n_1}, \leq 1} \sqcup \cdots  \sqcup \Tcal_{{n_k}, \leq 1}$ to $\mathfrak{X}(G)  \sqcup \mathfrak{X}(X)$ by either $W$ or $V$, depending on whether the corresponding subtrees intersect $\mathrm{TopRightEd}( \Tcal)$ or not.

Just as for $\mathfrak{X}^{\Tcal}(G,f)$, the space $\mathfrak{X}^{\Tcal}(G,f,V;X,h)$ is nonempty and convex, and is equipped with a projection to the constant part in $X$:
\ea
\mathfrak{X}^{\Tcal}(G,f,V;X,h) &\to \mathfrak{X}(X,h), \\
 W &\mapsto \widehat{W}.
\ea

\end{defi}

\begin{defi}\label{def:treed_pseudograd_mult_tree_X} 

Pick $W\in \mathfrak{X}^{\Tcal}(G,f,V;X,h)$, and let $T = (\tau, l)$ be a metric tree, written as before $T = \coprod_{e\in \mathrm{Edges}(T)} I_e$.

Restricting $W$ to $T$ then gives a map $W_T \colon T \to \mathfrak{X}(G) \sqcup  \mathfrak{X}(X)$.

A \emph{multiplicative tree} in $(G,X)$ is a map $\gamma \colon T \to G \sqcup X$ such that:
\begin{itemize}
\item it maps  $\mathrm{LeftEd}(T)$ to $G$ and $\mathrm{TopRightEd}( T)$ to $X$,
\item on each edge $I_e$, the restriction $\gamma_e$ of $\gamma$ is a flowline for $W_T$.
\item at every vertex $v$, the multiplicative condition holds.  Assume that the incoming edges at $v$ are given in the following order:
\e
 \mathrm{In}(v) = (e_1, \ldots, e_k) ,
\e
and let $f$ be the outgoing edge. The condition is that 
\e\label{eq:mult_cond_vert_X}
\gamma_{e_1}(v) \times \cdots \times \gamma_{e_k}(v) = \gamma_{f}(v),
\e
where $\gamma_{e}(v)$ stands for $\gamma(t)$, with $t$ the endpoint of $I_e$ corresponding to $v$.  Here, the last $\times$ either stands for the group multiplication $G\times G \to G$ or the action map $G\times X\to X$.
\end{itemize}

As before, if, as an ordered set, $\mathrm{Leaves}(\tau)= (l_1, \ldots, l_n)$, let
\e
\lim_{-\infty} {\gamma} = (\lim_{-\infty} {\gamma}_{l_1}, \ldots , \lim_{-\infty} {\gamma}_{l_n}) \in (\mathrm{Crit} f)^{n-1} \times \mathrm{Crit} h ,
\e
and let
\e
\lim_{+\infty} {\gamma} = \lim_{+\infty} {\gamma}_{r_\tau} \in \mathrm{Crit} h ,
\e
with $r_\tau$ the root.
\end{defi}

Just as for the \Ainf -structure on $CM_*(G)$, for $\underline{x}\in (\mathrm{Crit} f)^{n-1} \times \mathrm{Crit} h$ and $y\in\mathrm{Crit} h$, one can define moduli spaces $\Mcal(\underline{x},y; V,W)$ of multiplicative trees in $(G,X)$ with limits $\underline{x}$ and $y$ at $\pm \infty$. For $W$ in a comeagre subset $\mathfrak{X}^{\Tcal}_{\mathrm{reg}}(G,f,V;X,h)\subset \mathfrak{X}^{\Tcal}(G,f,V;X,h)$, this space is smooth of dimension $\abs{\underline{x}} -\abs{y} +n-2$. When of dimension 0 it is compact, and when of dimension 1, it can be compactified to $\overline{\Mcal}(\underline{x},y; V,W)$, with boundary
\ea
\partial \overline{\Mcal}(\underline{x}, y ; V,W)&= \bigcup_{z\in\mathrm{Crit}h; \ 1\leq i\leq j= n} \Mcal(\underline{x}_1, z ; V,W) \times \Mcal(\underline{x}_2, y ; V,W),\label{eq:boundary_mod_space_module_str} \\
&\cup  \bigcup_{z\in\mathrm{Crit}f; \ 1\leq i\leq j< n} \Mcal(\underline{x}_1, z ; V) \times \Mcal(\underline{x}_2, y ; V,W), \nonumber
\ea
where (see Figure~\ref{fig:tree_breaking}): $\underline{x}_1 = (x_i, x_{i+1}, \ldots, x_j)$, $z$ is such that  $\mathrm{vdim}(\underline{x}_1,z)=0$, and $\underline{x}_2 = (x_1, \ldots, x_{i-1}, z,  x_{j+1} \ldots, x_n)$ (and therefore $\mathrm{vdim}(\underline{x}_2,y)=0$).

Then, with $A= CM_*(G,f)$ and  $M= CM_*(X,h)$,  the zero-dimensional moduli spaces define maps 
\e
\mu_M^{n|1}\colon A^{\otimes n}\otimes M \to M ,
\e
and equation~\ref{eq:boundary_mod_space_module_str} shows that these are \Ainf -module structure maps.

\section{The $A_\infty$-module structure on the Floer complex}
\label{sec:Ainf_mod_str_CF}

\subsection{Geometric setting}
\label{ssec:geom_setting}

We will be working in the setting below, similar with \cite[Hyp.~3.1]{HLSsimplicial} elegant assumptions (contain bot the exact and monotone setting), except that we want our group $G$ compact. 

\begin{assumption}Let:
\begin{itemize}

\item $G$ be a compact Lie group,

\item $(M,\omega)$ be a symplectic manifold, on which $G$ acts by symplectomorphisms,

\item $L_0, L_1\subset M$ be a pair of Lagrangians invariant by the $G$-action.

\end{itemize}
Such that:
\begin{itemize}
\item Any loop of paths from $L_0$ to $L_1$
\e
u\colon (S^1\times [0,1]; S^1\times \left\lbrace 0\right\rbrace , S^1\times \left\lbrace 1\right\rbrace ) \to (M;L_0, L_1)
\e
has zero area and Maslov index.
\item $M$ is either compact or convex at infinity, for some fixed \acs .
\item $L_0$ and $L_1$ are either compact of cylindrical and disjoint at infinity.
\end{itemize}

\end{assumption}

As usual, we will need to use Hamiltonian perturbations and perturb \acs s to achieve transversality, we now define the relevant spaces of perturbations.

\begin{defi}\label{def:} Let $\Hcal_t(M)$ denote the space of smooth maps 
\e
H\colon [0,1] \times M\to \rr
\e
with compact support. For such maps, let $\phi^t_{H}\colon M\to M$ denote their associated Hamiltonian symplectomorphism, i.e. the time $t$ flow of their symplectic gradient.

Let $\Jcal(M)$ stand for the space of $\omega$-compatible \acs s, and $\Jcal_t(M) = C^\infty([0,1], \Jcal(M))$.

Given $L_0, L_1$, a pair $\Fcal = (H_t, J_t)\in \Hcal_t(M)\times \Jcal_t(M)$ such that $\phi_{H_t}^1(L_0)$ and $L_1$ intersect transversely a \emph{Floer datum}.

For such datum, let $\Ical_{H_t}(L_0, L_1)$ (or sometimes $\Ical_{\Fcal}(L_0, L_1)$ ) denote the set of $H_t$-perturbed intersection points, i.e. Hamiltonian chords $x\colon [0,1]\to M$ of the symplectic gradient $X_{H_t}$  with $x(0)\in L_0$ and $x(1)\in L_1$. These are in one to one correspondence with $\phi_{H_t}^1(L_0^N) \cap L_1^N$.

%\e
%\Ical_{H_t}(L_0, L_1):=   \phi_{H_t}^1(L_0) \cap L_1
%\e
%be the finite set of perturbed intersection points (i.e. time 1 Hamiltonian chords from $L_0$ to $L_1$).
\end{defi}

%Let 

Let $Z=\{s+it: 0\leq t \leq 1\}\subset \cc$ be the strip, with its two boundary components $\partial_0 Z= \lbrace t=0\rbrace$, $\partial_1 Z= \lbrace t=1\rbrace$. For $\Fcal = (H_t, J_t)\in \Hcal_t(M)\times \Jcal_t(M)$ and $x, y \in \Ical_{H_t}(L_0, L_1)$, let $\widetilde{\Mcal}(x, y; \Fcal)$ be the moduli space of perturbed $J_t$-holomorphic strips 
\e
u\colon Z\to M
\e
satisfying the \emph{Floer equation}
\e\label{eq:Floer_eq}
\partial_s u + J_t (\partial_t u - X_{H_t})=0,
\e
the Larangian boundary conditions $u_{|\partial_0 Z} \subset L_0$, $u_{|\partial_1 Z} \subset L_1$, and asymptotic to $x$ and $y$ when $s\to-\infty$ and $s\to+\infty$ respectively. Let then $\Mcal(x, y; \Fcal)$ be its quotient by $\rr$ (modulo translations in the $s$-direction).

For $i\in \zz$, let  $\widetilde{\Mcal}(x, y; \Fcal)_i$ and $\Mcal(x, y; \Fcal)_i$ denote the subsets of curves with Maslov index $I(u)=i+1$. 
\begin{prop}\label{prop:def_CF_N}
Assume that $(M;L_0,L_1)$  satisfy the above assumptions. There exists a comeagre subset
\e
\Hcal\Jcal_t^{\mathrm{reg}}(M)\subset \Hcal\Jcal_t(M):=\Hcal_t(M) \times \Jcal_t(M)
\e
of regular peturbations such that, for $\Fcal = (H_t, J_t)\in \Hcal\Jcal_t^{\mathrm{reg}}(M)$, $\phi_{H_t}^1(L_0)$ intersects $L_1$ transversely; $\widetilde{\Mcal}(x, y; \Fcal)_i$ and $\Mcal(x, y; \Fcal)_i$ are smooth of dimension $i+1$ and $i$ respectively. When $i=0$,  $\Mcal(x, y; \Fcal)_0$ is a finite set. In this case, define
\e
CF(M ; L_0,L_1; \Fcal)= \bigoplus_{x\in \Ical(L_0, L_1; H_t)} \Z{2}\, x, 
\e
with differential $\partial = \mu^{0|1}$ defined by 
\e
\partial x = \sum_{y\in \Ical_{\Fcal}(L_0, L_1)} \# \Mcal(x, y; \Fcal)_0 \,y .
\e
%
%Then $\partial^2=0$,  therefore one can define $HF(M ; L_0,L_1; \Fcal)$ as the homology group of this chain complex.

\end{prop}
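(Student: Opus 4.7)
The claim is the standard setup of Lagrangian Floer homology under a weakly exact-type assumption; I would follow the scheme of Floer, adapted to the boundary case by Oh, with Hamiltonian-compatible transversality in the style of Floer--Hofer--Salamon. There are three steps: transversality of intersection points, transversality of the Floer strip moduli spaces (with the associated Fredholm dimension formula), and compactness.

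\textbf{Transversality.} For $H_t$ in a Baire-generic subset of $\Hcal_t(M)$, the time-one map $\phi^1_{H_t}$ places $\phi^1_{H_t}(L_0)$ transverse to $L_1$, yielding a discrete set $\Ical_{H_t}(L_0,L_1)$ (finite by the compact/cylindrical-at-infinity assumption on $L_0, L_1$). Next, for each pair $x,y$ and each prescribed Maslov index $i+1$, I would form the universal moduli space of pairs $(u, \Fcal)$ with $u$ satisfying the Floer equation~(\ref{eq:Floer_eq}) and the prescribed asymptotics and boundary conditions. This is the zero locus of a Fredholm section of a Banach bundle over $\Bcal(x,y) \times \Hcal\Jcal_t(M)$, where $\Bcal(x,y)$ is a suitable $W^{1,p}$ space of strips. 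The linearization in the $u$-direction has Fredholm index equal to $I(u)$; surjectivity of the full linearization is the standard somewhere-injective-points argument (analytic unique continuation for $J_t$-holomorphic strips, together with a cutoff variation of $J_t$ supported near an injective point of $u$). Sard--Smale then produces the comeagre regular subset $\Hcal\Jcal_t^{\mathrm{reg}}(M) \subset \Hcal\Jcal_t(M)$, and the dimension count follows, with $\Mcal(x,y;\Fcal)_i = \widetilde{\Mcal}(x,y;\Fcal)_i/\rr$ being smooth of dimension $i$.

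\textbf{Compactness and finiteness.} The assumption that every loop of paths from $L_0$ to $L_1$ has vanishing area and Maslov index rules out bubbling: a non-constant $J_t$-holomorphic sphere, or a disc bubble with boundary on $L_0$ or $L_1$, could be spliced (via a standard neck-stretching/gluing at the bubbling point) with a nearby Floer strip to produce a loop of paths of area or Maslov index equal to those of the bubble, contradicting the hypothesis. Escape to infinity is prevented either by compactness of $M$, or by a plurisubharmonic maximum principle at the convex end combined with the cylindrical-and-disjoint structure of $L_0, L_1$ there. Gromov compactness for strips with Lagrangian boundary then applies: every sequence in $\Mcal(x,y;\Fcal)_i$ subconverges to a broken trajectory. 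When $i=0$, transversality forces each component of such a broken configuration to have Maslov index at least $1$, so no breaking occurs and $\Mcal(x,y;\Fcal)_0$ is a compact smooth $0$-manifold, hence finite. The operator $\partial$ is therefore well-defined, and the standard analysis of the boundary of the compactification of $\Mcal(x,z;\Fcal)_1$ yields $\partial^2 = 0$.

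The main technical point will be the somewhere-injective-points argument for Hamiltonian-perturbed strips with boundary on the two Lagrangians, particularly when $M$ is non-compact with cylindrical Lagrangians: one must choose the perturbation space (the support of admissible $H_t$) compatibly with the convex structure at infinity so that injective-point variations remain in the admissible class, and verify that non-constant Floer strips are not entirely contained in $L_0 \cap L_1$. Once this is in place, all the above steps are standard, and the ingredients assembled here will feed directly into the \Ainf-module construction of Section~\ref{sec:Ain_alg_mod_Morse}.
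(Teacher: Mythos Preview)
Your outline is correct and follows the standard Floer-theoretic argument; the paper itself does not prove this proposition, simply stating it as a known foundational result. Your sketch of transversality via Sard--Smale on the universal moduli space and compactness via Gromov's theorem (with bubbling excluded by the zero-area/zero-Maslov hypothesis) is exactly the argument one would cite from the literature.
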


Now we define the space of perturbations for hybrid trees, analogous to $\mathfrak{X}^{\Tcal}(G,f,V;X,h)$ in Definition~\ref{def:univ_pert_pseudograd_X}.

\begin{defi}\label{def:univ_pert_hybrid_tree}

Fix a $V\in \mathfrak{X}_{\mathrm{reg}}^{\Tcal}(G,f)$, and let
\e
\Pcal ert^{\Tcal}(G,f,V;L_0, L_1) \subset \mathrm{Maps} \left( \Tcal ,  \mathfrak{X}(G)  \sqcup \left( \Hcal\Jcal_t(M)  \right)  \right)  
\e 
consist in smooth maps $P$ that:
\begin{enumerate}
\item map $\mathrm{LeftEd}(\Tcal)$ to $\mathfrak{X}(G)$ and $\mathrm{TopRightEd}( \Tcal)$ to $\Hcal \Jcal_t(M)$,

\item are locally constant on $ \Tcal_{ \geq 1}$ and equal to either $\widehat{V} \in \mathfrak{X}(G,f)$ or some pair $\widehat{P} \in \Hcal\Jcal_t(M)$
\item are coherent with respect to edge breaking, in the following sense. If $\tau\in \kappa_n$ and $\Ecal \subset \mathrm{IntEd}(\tau)$ is a subset of internal edges splitting $\tau$ into subtrees 
\e
\tau_1\in \kappa_{n_1}, \ldots , \tau_k \in \kappa_{n_k}, 
\e
\end{enumerate}
then gluing metric trees at edges of $\Ecal$ gives gluing maps, for $L\geq 2$:
\e
K_{n_1} \times \cdots \times K_{n_k} \times [L, +\infty)^{\abs{\Ecal}} \to K_n 
\e
Let $\nu_L$ stand for the image of this map, then restricted to this image, there is a map
\e
{\Tcal_{n, \leq 1}}_{|\nu_L} \to \Tcal_{{n_1}, \leq 1} \sqcup \cdots  \sqcup \Tcal_{{n_k}, \leq 1}
\e 
and we want that $P$ factors through this map for $L$ large enough. Here, we map $\Tcal_{{n_1}, \leq 1} \sqcup \cdots  \sqcup \Tcal_{{n_k}, \leq 1}$ to $\mathfrak{X}(G)  \sqcup \Hcal\Jcal_t(M)$ by either $P$ or $V$, depending on whether the corresponding subtrees intersect $\mathrm{TopRightEd}( \Tcal)$ or not.

Just as for $\mathfrak{X}^{\Tcal}(G,f)$, the space $\Pcal ert^{\Tcal}(G,f,V;L_0, L_1) $ is nonempty and convex, and is equipped with a projection to the constant part in $X$:
\ea
\Pcal ert^{\Tcal}(G,f,V;L_0, L_1) &\to \Hcal \Jcal_t(M), \\
P &\mapsto \widehat{P}.
\ea

\end{defi}

Alternatively, one can think about perturbations of $\Pcal ert^{\Tcal}(G,f,V;L_0, L_1)$ as maps $h\Tcal \to \mathfrak{X}(G)  \sqcup \left( \Hcal(M)  \times \Jcal(M)  \right) $, with
\e
h\Tcal = \mathrm{LeftEd}(T) \cup   \mathrm{TopRightEd}(T)\times[0,1]
\e
the universal hybrid tree.

If $T = (\tau, l)$ is a metric tree, recall that we  still denot by $T$ the fibre of $\Tcal$ at $T$:
\e
T = \coprod_{e\in \mathrm{Edges}(T)} I_e .
\e
Likewise, let $hT$ stand for the fibre of $h\Tcal$ at $T$:
\e
hT = \coprod_{e\in \mathrm{LeftEd}(T)} I_e  \sqcup \coprod_{e\in \mathrm{TopRightEd}( T)} I_e \times [0,1]  .
\e

\begin{defi}\label{def:hybrid_tree} 

Pick $P\in \Pcal ert^{\Tcal}(G,f,V;L_0, L_1)$, and let $T = (\tau, l)$ be a metric tree.%, written as before $T = \coprod_{e\in \mathrm{Edges}(T)} I_e$.

Restricting $P$ to $hT$ then gives a map $P_T \colon hT \to \mathfrak{X}(G) \sqcup \Hcal\Jcal(M)$.

A \emph{hybrid tree} in $(G;M, L_0, L_1)$ is a pair of maps
\ea
\gamma &\colon \mathrm{LeftEd}(T) \to G , \\
u &\colon    \mathrm{TopRightEd}( T)\times [0,1] \to (M; L_0, L_1),
\ea
(which we will write as $(\gamma,u) \colon hT \to G \sqcup M$) such that:

\begin{itemize}

\item on each left edge $I_e$, the restriction $\gamma_e$ of $\gamma$ is a flowline for $P_T$.

\item on each top-right strip $Z_e = I_e\times [0,1]$, the restriction $u_e$ of $u$ statisfies the Floer equation for $P_T$.

\item at every ``left vertex'' (i.e. those only touching flowlines $\mathrm{LeftEd}(T)$), the usual multiplicative condition holds.

\item at every ``top-right vertex'' $v$ (i.e. the ones touching top-right strips), with the incoming edges at $v$:
\e
 \mathrm{In}(v) = (e_1, \ldots, e_k) ,
\e
and $f$ the outgoing edge, we have:
\e
\forall t\in [0,1], \ \gamma_{e_1}(v) \times \cdots \times u_{e_k}(v, t) = u_{f}(v, t).
\e
\end{itemize}

If, as an ordered set, $\mathrm{Leaves}(\tau)= (l_1, \ldots, l_n)$, let
\e
\lim_{-\infty} {\gamma} = (\lim_{-\infty} {\gamma}_{l_1}, \ldots , \lim_{-\infty} {u}_{l_n}) \in (\mathrm{Crit} f)^{n-1} \times \Ical_{\widehat{P}}(L_0, L_1) ,
\e
and let
\e
\lim_{+\infty} {\gamma} = \lim_{+\infty} {u}_{r_\tau} \in \Ical_{\widehat{P}}(L_0, L_1) ,
\e
with $r_\tau$ the root.
\end{defi}

For $\underline{x}\in (\mathrm{Crit} f)^{n-1} \times \Ical_{\widehat{P}}(L_0, L_1)$ and $y\in \Ical_{\widehat{P}}(L_0, L_1)$, one can define moduli spaces $\Mcal(\underline{x},y; V,P)$ of hybrid trees in $(G;M,L_0, L_1)$ with limits $\underline{x}$ and $y$ at $\pm \infty$.

These are solutions of a Fredholm problem, and we denote $\Mcal(\underline{x},y; V,P)_i$ those of virtual dimension $i$ (i.e. of Fredholm index $i$, except for the case of strips, where there is an additional quotient by $\rr$, in which case the (Maslov) Fredholm index is $i+1$).

For $P$ in a comeagre subset 
\e
\Pcal ert^{\Tcal}_{\mathrm{reg}}(G,f,V;L_0, L_1)\subset \Pcal ert^{\Tcal}(G,f,V;L_0, L_1),
\e 
this space is smooth of dimension given by its Fredholm index. When of dimension 0 it is compact, and when of dimension 1, it can be compactified to $\overline{\Mcal}(\underline{x},y; V,P)$, with boundary
\ea
\partial \overline{\Mcal}(\underline{x}, y ; V,P)&= \bigcup_{z\in\Ical_{\widehat{P}}(L_0, L_1); \ 1\leq i\leq j= n} \Mcal(\underline{x}_1, z ; V,P) \times \Mcal(\underline{x}_2, y ; V,P),\label{eq:boundary_mod_space_module_str2} \\
&\cup  \bigcup_{z\in\mathrm{Crit}f; \ 1\leq i\leq j< n} \Mcal(\underline{x}_1, z ; V) \times \Mcal(\underline{x}_2, y ; V,P), \nonumber
\ea
where (see Figure~\ref{fig:tree_breaking}): 
\begin{itemize}
\item $\underline{x}_1 = (x_i, x_{i+1}, \ldots, x_j)$, $z$ is such that  $\mathrm{vdim}(\underline{x}_1,z)=0$,
\item $\underline{x}_2 = (x_1, \ldots, x_{i-1}, z,  x_{j+1} \ldots, x_n)$ (and therefore $\mathrm{vdim}(\underline{x}_2,y)=0$).
\end{itemize}

In addition to sphere and disk bubbling (ruled out by our assumptions), another kind of bubble can possibly develop when energy concentrates at a seam point, while simultaneously a strip length goes to zero, see Figure~\ref{fig:fig8_bubbling}. From Bottman's removal of singularity theroem \cite{Bottman}, this will either be a figure 8 bubble, or possibly its disc counterpart. In either cases, if one denotes $(b_0,b_1,b_2)$ the three components of the bubble, and $g_0, g_1$ the values of $\gamma_0, \gamma_1$ at the limit, then $(b_0,g_0\cdot b_1,g_0\cdot g_1\cdot b_2)$ will be an actual sphere or disc in $M$. From our assumptions, it will have to be constant.

\begin{figure}[!h]
    \centering
    \def\svgwidth{\textwidth}
   \includegraphics[scale=.15]{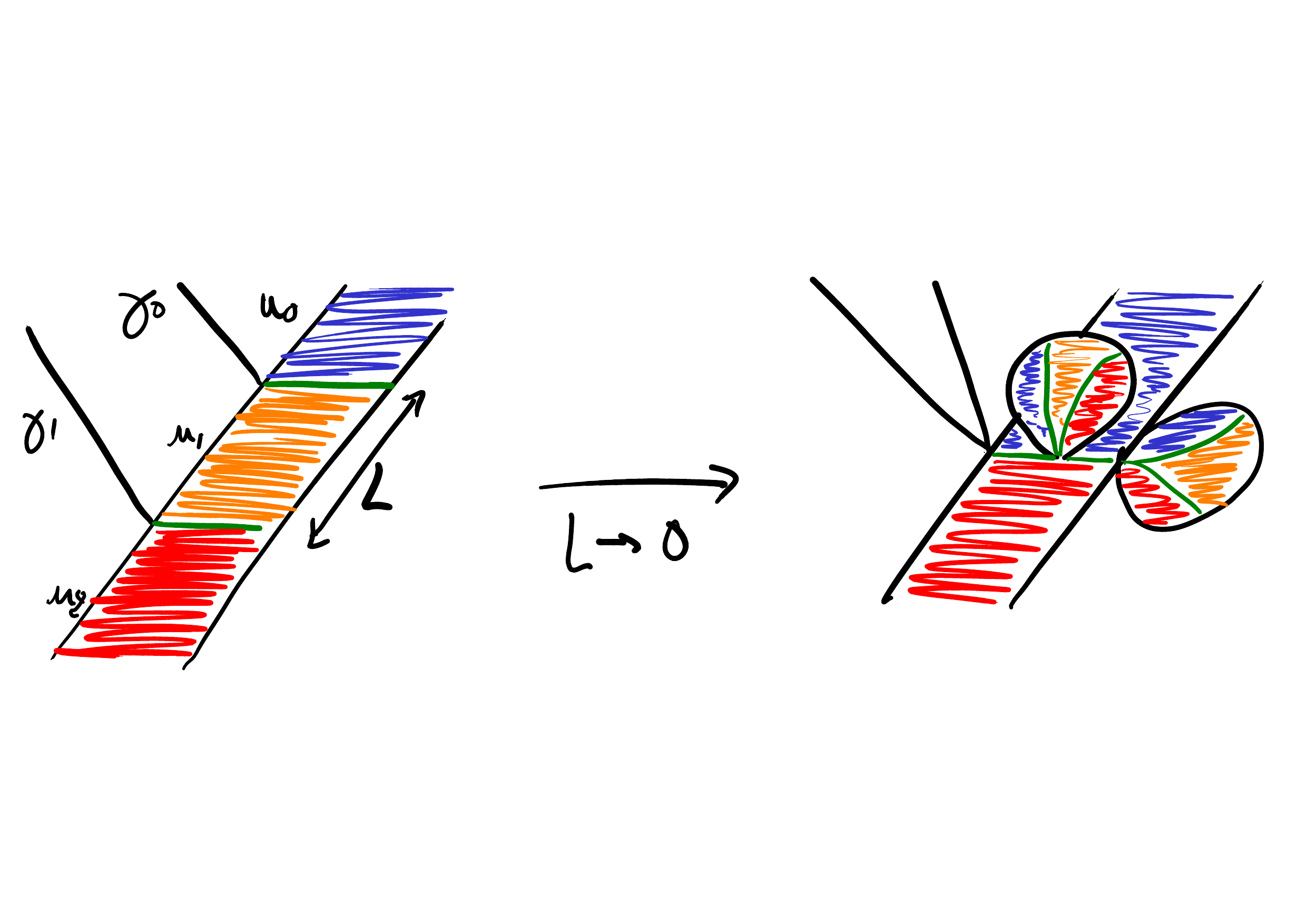}
      \caption{Figure 8 sphere and disc bubling.}
      \label{fig:fig8_bubbling}
\end{figure}

Then, with $A= CM_*(G,f)$ and  $M= CF(L_0, L_1; \widehat{P})$,  the zero-dimensional moduli spaces define maps  
\e
\mu_M^{n|1}\colon A^{\otimes n}\otimes M \to M ,
\e
and equation~\ref{eq:boundary_mod_space_module_str2} shows that these are \Ainf -module structure maps.

\bibliographystyle{alpha}
\bibliography{biblio}

\end{document}